\numberwithin{equation}{section}
\newtheorem{maintheorem}{Theorem}
\newtheorem{theorem}{Theorem}[section]
\newtheorem{lemma}[theorem]{Lemma}
\newtheorem{proposition}[theorem]{Proposition}
\newtheorem{corollary}[theorem]{Corollary}
\newtheorem{fact}[theorem]{Fact}
\newtheorem{definition}[theorem]{Definition}
\newtheorem{remark}[theorem]{Remark}
\newtheorem{example}[theorem]{Example}
\def\E{\mathop{\mathbb E}}
\newcommand{\Var}{{\bf Var}}
\newcommand{\ct}{\mathrm{ct}}
\renewcommand{\Pr}{ \mathrm P}
\newcommand{ \rel}{ t_{\mathrm{rel}} }
\newcommand{ \mix}{ t_{\mathrm{mix}} }
\newcommand{ \hit}{ \mathrm{hit} }
\newcommand{ \h}{ \mathrm{H} }
\newcommand{\N}{\mathbb N}
\newcommand{\R}{\mathbb R}
\begin{document}

\title{Characterization of cutoff for reversible Markov chains}
\author{Riddhipratim Basu
\thanks{
Department of Statistics, UC Berkeley, California, USA. E-mail: {\tt riddhipratim@stat.berkeley.edu}. Supported by UC Berkeley Graduate Fellowship.}
\and Jonathan Hermon
\thanks{
Department of Statistics, UC Berkeley, USA. E-mail: {\tt jonathan.hermon@stat.berkeley.edu}.}
\and
Yuval Peres
\thanks{
Microsoft Research, Redmond, Washington, USA. E-mail: {\tt peres@microsoft.com}.}
}
\date{}
\maketitle

\begin{abstract}
A sequence of Markov chains is said to exhibit (total variation) cutoff if the convergence to stationarity in total variation distance is abrupt. We consider reversible lazy chains. We prove a necessary and sufficient condition for the occurrence of the cutoff phenomena in terms of concentration of hitting time of ``worst" (in some sense) sets of stationary measure at least $\alpha$, for some $\alpha \in (0,1)$.

We also give general bounds on the total variation distance of a reversible chain at time $t$ in terms of the probability that some ``worst" set of stationary measure at least $\alpha$ was not hit by time $t$.
As an application of our techniques we show that a sequence of lazy Markov chains on finite trees exhibits a cutoff iff the product of their spectral gaps and their (lazy) mixing-times tends to $\infty$.
\end{abstract}

\paragraph*{\bf Keywords:}
{\small Cutoff, mixing-time, finite reversible Markov chains, hitting times, trees, maximal inequality.
}
\newpage


\section{Introduction}
In many randomized algorithms, the mixing-time of an underlying Markov chain is the main component of the running-time (see \cite{sinclair1993algorithms}). We obtain a tight bound on $t_{\mathrm{mix}}(\epsilon)$ (up
to an absolute
constant independent of $\epsilon$) for lazy reversible Markov chains in terms of hitting times of large sets (Proposition \ref{prop: TVbound0}, (\ref{eq: TVbound0})). This refines previous results
in the same spirit  (\cite{peres2011mixing} and
\cite{oliveira2012mixing}, see related work), which gave a less precise
characterization of the mixing-time in terms of hitting-times (and were restricted to hitting times of sets
whose stationary measure is at most 1/2).

Loosely speaking, the (total variation) \emph{\textbf{cutoff phenomenon}}
 occurs when over a negligible period of time, known as the \emph{\textbf{cutoff
window}}, the (worst-case) total variation distance (of a certain finite
Markov chain from its stationary distribution) drops abruptly from a value
close to 1 to near $0$. In other words, one should run the chain until
the cutoff point for it to even slightly mix in total
variation, whereas running it any further  is essentially redundant.

Though many families of chains are believed to exhibit cutoff, proving the
occurrence of this phenomenon is often an extremely challenging task. The cutoff phenomenon was given its name by Aldous and Diaconis
in their seminal paper $\cite{aldous1986shuffling}$ from 1986 in which they
suggested the following
open problem (re-iterated in \cite{diaconis1996cutoff}), which they refer
to as ``the most interesting problem": 
``\textit{Find abstract
conditions which ensure that the cutoff phenomenon occurs}". Although
drawing much attention, the progress made in the investigation
of the cutoff phenomenon has been mostly done through understanding examples and the field suffers from a rather disturbing lack of general theory.  Our bound on the mixing-time is sufficiently sharp to imply a characterization of  cutoff  for reversible Markov chains in terms
of concentration of hitting times.

We use our general characterization of cutoff to give a sharp spectral condition for cutoff in lazy weighted nearest-neighbor random walks on trees (Theorem \ref{thm: treescutoff}).

Generically, we shall denote the state space of a Markov chain by $\Omega
$ and its stationary distribution by $\pi$ (or $\Omega_n$ and $\pi_n$, respectively, for the $n$-th chain in a sequence of chains). Let $(X_t)_{t=0}^{\infty}$ be an irreducible Markov chain on a finite state
space $\Omega$ with transition matrix $P$ and stationary distribution $\pi$. We denote such a chain by $(\Omega,P,\pi)$. We say that the chain is finite,
whenever $\Omega$ is finite. We say the chain is \emph{\textbf{reversible}}
if $\pi(x)P(x,y)=\pi(y)P(y,x)$, for any $x,y \in \Omega$.

We call a chain
\emph{\textbf{lazy}} if $P(x,x) \ge 1/2$, for all $x$. In this paper, all discrete-time chains would be assumed to be lazy, unless
otherwise is specified. To avoid periodicity and near-periodicity issues one often considers the lazy version of the chain, defined by replacing $P$ with $P_L:=(P+I)/2$. Another way to avoid
periodicity issues is to consider the continuous-time
version of the chain, $(X_t^{\mathrm{ct}})_{t
\ge 0}$,
which is a continuous-time Markov chain whose heat kernel is defined by $H_t(x,y):=\sum_{k=o}^{\infty}\frac{e^{-t}t^k}{k!}P^t(x,y)$.   

We denote by $\Pr_{\mu}^t$ ($\Pr_{\mu}$) the distribution of $X_t$ (resp.~$(X_t)_{t \ge 0}$), given that the initial distribution is $\mu$. We denote by $\h_{\mu}^t$
($\h_{\mu}$) the distribution of $X_t^{\mathrm{ct}}$ (resp.~$(X_t^{\mathrm{ct}})_{t
\ge 0}$), given that the initial distribution is $\mu$.  When $\mu=\delta_x$, the
  Dirac measure on some $x \in \Omega$ (i.e$.$ the chain starts at $x$ with probability 1), we simply write $\Pr_x^t$ ($\Pr_x$) and $\h_x^t$ ($\h_x$). For any $x,y \in \Omega$ and $t \in \N$ we write $\Pr_x^{t}(y):=\Pr_{x}(X_t=y)=P^{t}(x,y)$.

 We denote the set of probability distributions on a (finite) set $B$ by
$\mathscr{P}(B) $.  For any $\mu,\nu \in \mathscr{P}(B)$,  their \emph{\textbf{total-variation
distance}} is defined to
be
$\|\mu-\nu\|_\mathrm{TV} := \frac{1}{2}\sum_{x } |\mu(x)-\nu(x)|=\sum_{x
\in B :\, \mu(x)>\nu(x)}\mu(x)-\nu(x)$.
The worst-case total variation distance at time $t$ is defined as $$d(t) := \max_{x \in \Omega} d_{x}(t), \text{ where for any }x \in \Omega,\,d_{x}(t) :=  \| \Pr_x(X_t \in \cdot)- \pi\|_\mathrm{TV}.$$
The $\epsilon$\textbf{-mixing-time} is defined as  $$t_{\mathrm{mix}}(\epsilon) := \inf \left\{t : d(t) \leq
\epsilon \right\}. $$ 
Similarly, let $d_{\mathrm{ct}}(t):=\max_{x \in \Omega} \|\h_x^t-\pi
\|_\mathrm{TV}$ and let $t_{\mathrm{mix}}^{\mathrm{ct}}(\epsilon):= \inf \left\{t :d_{\mathrm{ct}}(t) \leq
\epsilon \right\}$.

When $\epsilon=1/4$ we omit it from the above
notation.
Next, consider a sequence of such chains, $((\Omega_n,P_n,\pi_n): n \in \N)$, each with its corresponding
worst-distance from stationarity $d^{(n)}(t)$, its mixing-time $t_{\mathrm{mix}}^{(n)}$,
etc.. We say that the sequence exhibits a \emph{\textbf{cutoff}} if the
following
sharp transition in its convergence to stationarity occurs:
$$\lim_{n \to \infty}\frac{t_{\mathrm{mix}}^{(n)}(\epsilon)}{t_{\mathrm{mix}}^{(n)}(1-\epsilon)}=1, \text{ for any }0<\epsilon <1. $$
We say that the sequence has a \emph{\textbf{cutoff window}} $w_n$, if  $w_n=o(t_{\mathrm{mix}}^{(n)})$ and for any $\epsilon \in (0,1)$ there exists $c_{\epsilon}>0$ such that for all $n$ 
\begin{equation}
\label{eq-cutoff-def}
t_{\mathrm{mix}}^{(n)}(\epsilon)-t_{\mathrm{mix}}^{(n)}(1-\epsilon) \le c_{\epsilon} w_n.
\end{equation}
Recall that if $(\Omega,P,\pi)$ is a finite reversible irreducible lazy chain, then $P$ is self-adjoint w.r.t.~the inner product induced by $\pi$ (see Definition \ref{def: L_p distance of measures}) and hence has $|\Omega|$ real eigenvalues. Throughout we shall denote them by $1=\lambda_1>\lambda_2 \ge \ldots \ge \lambda_{|\Omega|} \ge 0$ (where $\lambda_2<1$ since the chain is irreducible and $\lambda_{|\Omega|} \geq 0$ by laziness).
Define the \emph{\textbf{relaxation-time}} of $P$ as $t_{\mathrm{rel}}:=(1-\lambda_2)^{-1}$. The following general relation holds for lazy chains.
\begin{equation}
\label{eq: t_relintro}
(t_{\mathrm{rel}}-1)\log\left(\frac{1}{2\epsilon}\right) \le t_{\mathrm{mix}}(\epsilon) \le
\log \left( \frac{1}{\epsilon \min_x \pi(x)} \right) t_{\mathrm{rel}}
\end{equation}
(see
$\cite{levin2009markov}$ Theorems 12.3 and 12.4). 

We say that a family of chains satisfies the \emph{\textbf{product condition}}  if $(1-\lambda_2^{(n)})t_{\mathrm{mix}}^{(n)} \to \infty$ as $n \to \infty$ (or equivalently, $t_{\mathrm{rel}}^{(n)}=o(t_{\mathrm{mix}}^{(n)})$).
The following well-known fact follows easily from the first inequality
in (\ref{eq: t_relintro}) (c.f.~\cite{levin2009markov}, Proposition 18.4).
\begin{fact}
\label{fact: cutoffandtrel}
For a sequence of irreducible aperiodic reversible Markov chains with relaxation times
$\{t_{\mathrm{rel}}^{(n)} \}$ and mixing-times $\{t_{\mathrm{mix}}^{(n)} \}$, if the sequence exhibits a cutoff, then $t_{\mathrm{rel}}^{(n)}=o(t_{\mathrm{mix}}^{(n)})$.
\end{fact}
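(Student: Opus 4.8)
The plan is to establish the contrapositive by a short contradiction argument relying only on the first (``relaxation-time'') inequality in (\ref{eq: t_relintro}) together with the fact that $\epsilon\mapsto t_{\mathrm{mix}}^{(n)}(\epsilon)$ is non-increasing. Suppose, then, that the sequence exhibits cutoff but the product condition fails, i.e.\ $t_{\mathrm{rel}}^{(n)}/t_{\mathrm{mix}}^{(n)}\not\to 0$; then there exist a constant $c>0$ and a subsequence $(n_k)$ with $t_{\mathrm{rel}}^{(n_k)}\ge c\,t_{\mathrm{mix}}^{(n_k)}$ for every $k$. Before running the estimate I would first note that $t_{\mathrm{mix}}^{(n_k)}\to\infty$: since every state space has at least two points, $\min_x\pi_n(x)\le 1/2$, so $d^{(n)}(0)\ge 1/2$ and $t_{\mathrm{mix}}^{(n)}\ge 1$; and if $t_{\mathrm{mix}}^{(n_k)}$ remained bounded, then by the $\epsilon=1/4$ case of (\ref{eq: t_relintro}) so would $t_{\mathrm{rel}}^{(n_k)}$, which is precisely the degenerate regime of bounded mixing times, not regarded as exhibiting cutoff, and may be set aside.

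Next, fix any $\epsilon\in(0,1/4]$. Since $\epsilon\mapsto t_{\mathrm{mix}}^{(n)}(\epsilon)$ is non-increasing and $1-\epsilon\ge 1/4$, we have $t_{\mathrm{mix}}^{(n_k)}(1-\epsilon)\le t_{\mathrm{mix}}^{(n_k)}(1/4)=t_{\mathrm{mix}}^{(n_k)}$, whence
\[ \frac{t_{\mathrm{mix}}^{(n_k)}(\epsilon)}{t_{\mathrm{mix}}^{(n_k)}(1-\epsilon)}\;\ge\;\frac{t_{\mathrm{mix}}^{(n_k)}(\epsilon)}{t_{\mathrm{mix}}^{(n_k)}}\;\ge\;\frac{\bigl(t_{\mathrm{rel}}^{(n_k)}-1\bigr)\log\frac{1}{2\epsilon}}{t_{\mathrm{mix}}^{(n_k)}}\;\ge\;\Bigl(c-\frac{1}{t_{\mathrm{mix}}^{(n_k)}}\Bigr)\log\frac{1}{2\epsilon}, \]
the middle inequality being the first bound of (\ref{eq: t_relintro}) and the last using $t_{\mathrm{rel}}^{(n_k)}\ge c\,t_{\mathrm{mix}}^{(n_k)}$. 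Letting $k\to\infty$, so that $1/t_{\mathrm{mix}}^{(n_k)}\to 0$, the right-hand side tends to $c\log\frac{1}{2\epsilon}$. Now choose $\epsilon\in\bigl(0,\min(\tfrac{1}{4},\tfrac{1}{2}e^{-1/c})\bigr)$, so that $c\log\frac{1}{2\epsilon}>1$; then $\liminf_k t_{\mathrm{mix}}^{(n_k)}(\epsilon)/t_{\mathrm{mix}}^{(n_k)}(1-\epsilon)>1$, contradicting the definition of cutoff, which requires this ratio to tend to $1$ along the full sequence for every $\epsilon\in(0,1)$.

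I do not anticipate any genuine obstacle: this is the ``easy'' direction, and once the single substitution into (\ref{eq: t_relintro}) has been carried out the contradiction is immediate. The only points requiring mild care are (i) disposing of the degenerate case where $t_{\mathrm{mix}}^{(n)}$ fails to tend to infinity, in which ``cutoff'' is vacuous or ill-posed; and (ii) choosing $\epsilon$ small enough that simultaneously $\log\frac{1}{2\epsilon}>1/c$ and $\epsilon\le 1/4$, the latter being exactly what licenses, via monotonicity of $\epsilon\mapsto t_{\mathrm{mix}}^{(n)}(\epsilon)$, the inequality $t_{\mathrm{mix}}^{(n)}(1-\epsilon)\le t_{\mathrm{mix}}^{(n)}$ used above.
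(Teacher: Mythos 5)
Your argument is correct and is exactly the route the paper has in mind: the paper gives no proof of Fact \ref{fact: cutoffandtrel}, noting only that it follows easily from the first inequality in (\ref{eq: t_relintro}) (citing \cite{levin2009markov}, Proposition 18.4), and your contradiction via $t_{\mathrm{mix}}^{(n)}(\epsilon)\ge (t_{\mathrm{rel}}^{(n)}-1)\log\frac{1}{2\epsilon}$ combined with $t_{\mathrm{mix}}^{(n)}(1-\epsilon)\le t_{\mathrm{mix}}^{(n)}$ for $\epsilon\le 1/4$ is precisely that standard argument. Your setting aside of the bounded-$t_{\mathrm{mix}}$ regime matches the usual convention (and for lazy chains it can be justified outright, e.g.\ since $P^t(x,x)\ge 2^{-t}$ forces $\min_x\pi_n(x)$ to stay bounded away from $0$ and hence $d_n(t)$ to stay bounded away from $1$ along any subsequence with bounded mixing times, which is incompatible with cutoff), so there is no substantive gap.
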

In 2004, the third author $\cite{peresamerican}$ conjectured that, in many
natural classes of chains, the product condition is also sufficient for  cutoff.
In general, the product condition does not always imply cutoff. Aldous and Pak (private communication via P.~Diaconis) have constructed
relevant examples (see \cite{levin2009markov}, Chapter 18). This
left open the question of characterizing the classes of chains for which
the product condition is indeed sufficient.

We now state our main theorem, which generalizes previous results concerning birth
and death chains \cite{ding2010total}. The relevant setup is weighted nearest neighbor random walks on finite trees. See Section \ref{sec: trees} for a formal definition.
\begin{maintheorem}
\label{thm: treescutoff}
Let $(V,P,\pi)$ be a lazy reversible Markov chain on a tree $T=(V,E)$ with
$|V| \ge 3$. Then
\begin{equation}
\label{eq: maintreeintro1}
t_{\mathrm{mix}}(\epsilon) - t_{\mathrm{mix}}(1-\epsilon) \le 35\sqrt{\epsilon^{-1}t_{\mathrm{rel}}t_{\mathrm{mix}}}
\text{, for any } 0<\epsilon \le 1/4.
\end{equation}
In particular, if the product condition holds for a sequence of lazy reversible
Markov chains $(V_n,P_n,\pi_n)$ on finite trees $T_n=(V_n,E_n)$, then the
sequence exhibits a cutoff with a cutoff window $w_n=\sqrt{t_{\mathrm{rel}}^{(n)}t_{\mathrm{mix}}^{(n)}}
$.
\end{maintheorem}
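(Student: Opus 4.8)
The plan is to reduce the theorem about trees to the general hitting-time characterization of cutoff, which (per the abstract and Proposition~\ref{prop: TVbound0}) controls $t_{\mathrm{mix}}(\epsilon)$ up to the fluctuations of the hitting time of a ``worst'' set of stationary measure at least $\alpha$. The key structural feature of a tree is that for any edge $e=\{u,v\}$, removing $e$ disconnects $T$ into two components; hence the family of ``large'' sets is essentially generated by the two sides of each edge, and hitting such a set means crossing the corresponding edge. So the first step is to set up, for a fixed starting vertex $x$ (ultimately the worst one), the collection of ``half-tree'' target sets $A$ with $\pi(A)\ge\alpha$, and to observe that the relevant hitting time $T_A$ is a first-passage time across a single edge on the path from $x$ toward $A$. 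Because the walk on a tree restricted to such a path behaves like a one-dimensional (birth-and-death-type) process after a strong-stationary-time/collapsing argument, the hitting time $T_A$ decomposes (in distribution, or after suitable conditioning) into a sum of crossing times of the edges between $x$ and $A$; this is where the birth-and-death intuition from \cite{ding2010total} enters.

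The second step is to bound the variance of $T_A$ by $O(t_{\mathrm{rel}}\,\mathbb E_x[T_A])$, uniformly over the large sets $A$ and over $x$. The heuristic is that each edge-crossing time is an approximately geometric (or exponential, in continuous time) random variable whose mean is at most a constant times its ``local relaxation time,'' which is dominated by $t_{\mathrm{rel}}$, and these crossing times are independent (or can be made so by a regeneration/monotone-coupling argument along the path); summing, $\operatorname{Var}_x(T_A)\le C\,t_{\mathrm{rel}}\sum(\text{means})=C\,t_{\mathrm{rel}}\,\mathbb E_x[T_A]$. Since for a worst large set $\mathbb E_x[T_A]$ is comparable to $t_{\mathrm{mix}}$ (again via Proposition~\ref{prop: TVbound0} and \eqref{eq: t_relintro}), Chebyshev gives that $T_A$ is concentrated on a window of width $O(\sqrt{t_{\mathrm{rel}}t_{\mathrm{mix}}})$ around its mean. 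Feeding this concentration estimate into the general quantitative bounds relating $d(t)$ to $\Pr_x(T_A>t)$ yields, after tracking constants, the inequality
\begin{equation}
\label{eq: plan-window}
t_{\mathrm{mix}}(\epsilon)-t_{\mathrm{mix}}(1-\epsilon)\le 35\sqrt{\epsilon^{-1}t_{\mathrm{rel}}t_{\mathrm{mix}}},\qquad 0<\epsilon\le 1/4,
\end{equation}
with the numerical constant coming from the constants in the maximal inequality and in the variance bound.

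The third step is the immediate consequence: if the product condition $t_{\mathrm{rel}}^{(n)}=o(t_{\mathrm{mix}}^{(n)})$ holds, then $w_n:=\sqrt{t_{\mathrm{rel}}^{(n)}t_{\mathrm{mix}}^{(n)}}=o(t_{\mathrm{mix}}^{(n)})$, and \eqref{eq: plan-window} gives exactly the cutoff-window bound \eqref{eq-cutoff-def} with $c_\epsilon=35\epsilon^{-1/2}$ for $\epsilon\le 1/4$ (and the range $\epsilon\in(1/4,1)$ follows by monotonicity of $t_{\mathrm{mix}}(\cdot)$ together with the $\epsilon=1/4$ case), so the sequence has a cutoff with window $w_n$. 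I expect the main obstacle to be the second step: justifying rigorously that on a tree the hitting time of a worst large set really does behave like a sum of nearly-independent edge-crossing times with the variance-to-mean ratio controlled by $t_{\mathrm{rel}}$ — in particular handling the laziness, the non-reversibility of the path-restricted chain's environment, and the fact that the ``worst set'' must be chosen uniformly well for \emph{all} starting points. Controlling the interaction between excursions off the $x$-to-$A$ path (so that they contribute to the mean but not excessively to the variance) is the delicate part, and is presumably where the bulk of the tree-specific work in Section~\ref{sec: trees} goes.
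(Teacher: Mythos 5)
Your plan follows the same overall strategy as the paper --- reduce via Proposition \ref{prop: TVbound0} to concentration of hitting times of large sets, decompose hitting times along a path into independent crossing times using the tree structure, bound the variance by $O(t_{\mathrm{rel}})$ times the mean, and finish with Chebyshev --- but the two steps that carry all the weight are gaps, and one of them fails as stated. First, it is not true that the worst sets in the definition of $p_x(\alpha,t)$ are essentially half-trees whose hitting reduces to crossing the edges of a single path: the maximizing $A$ may be scattered over many branches, so $T_A$ is a minimum of hitting times and does not decompose along any path from $x$. The paper's fix is the central-vertex reduction (Lemma \ref{lem: TVboundstrees}): one sandwiches $\mathrm{hit}_{1/2}$ between $\tau_o$ and $\tau_o$ plus $O(t_{\mathrm{rel}}\log(1/\delta))$, where $o$ is a central vertex; the upper half of the sandwich is not automatic and uses the stationary-start bound of Lemma \ref{lem: AF1} together with a partition of $T\setminus\{o\}$ into two unions of components each of mass at most $2/3$. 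Nothing in your sketch supplies this, and no strong-stationary-time or collapsing-to-birth-and-death argument is needed anywhere: the independence of the crossing times $\tau_i$ is immediate from the strong Markov property plus the fact that each vertex on the path separates the tree.

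Second, your variance bound rests on the assertion that each edge-crossing time is roughly geometric with mean at most a constant times a ``local relaxation time'' dominated by $t_{\mathrm{rel}}$. As stated this is false for an arbitrary choice of root: for a walk on $[n]$ with a uniform bias toward $n$ and root $1$, one has $t_{\mathrm{rel}}=O(1)$ while the crossing time toward the root from state $2$ has exponentially large mean. What makes the bound true is precisely that $o$ is a \emph{central} vertex, so each subtree $W_u$ satisfies $\pi(W_u)\le 1/2$ and the target $A=V\setminus W_u$ has $\pi(A)\ge 1/2$; then Lemma \ref{lem: tree1} gives $\mathbb{E}_u[T_{f_u}^2]\le 4t_{\mathrm{rel}}\mathbb{E}_u[T_{f_u}]$ via the Kac-type identity $\mathbb{E}_{\psi_{B}}[T_{A}^{2}]=\mathbb{E}_{\psi_{B}}[T_{A}]\left(2\mathbb{E}_{\pi_{B}}[T_{A}]-1\right)$ of Proposition \ref{prop: Kac} combined with $\pi(A)\mathbb{E}_{\pi}[T_{A}]\le t_{\mathrm{rel}}\pi(A^{c})$ from Lemma \ref{lem: AF1}. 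Your sketch never invokes centrality of the root and offers no substitute for this identity, so the key estimate $\Var_{x}[T_{o}]\le 4t_{\mathrm{rel}}\mathbb{E}_{x}[T_{o}]$ (Corollary \ref{cor: concentrationfortrees}) is unproven. Finally, to convert concentration around the $x$-dependent means into a bound on $\tau_o(\epsilon)-\tau_o(1-\epsilon)$ and then into (\ref{eq: maintreeintro1}) with the constant $35$, you also need $\max_{x}\mathbb{E}_{x}[T_{o}]\le 4t_{\mathrm{mix}}$ (Lemma \ref{lem: hittingsetsofsize3/4}) and the bookkeeping of Corollary \ref{cor: concentrationfortrees1}; note that only this upper bound by $t_{\mathrm{mix}}$ is needed, not the two-sided comparability you assert.
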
  

In \cite{diaconis2006separation}, Diaconis and Saloff-Coste showed that a
sequence of birth and death (BD) chains exhibits separation cutoff if and
only if $t_{\mathrm{rel}}^{(n)} = o(t_{\mathrm{mix}}^{(n)})$. In \cite{ding2010total},
Ding et al.~extended this also to the notion of total-variation cutoff and showed that the cutoff window  is always at most $ \sqrt{t_{\mathrm{rel}}^{(n)}t_{\mathrm{mix}}^{(n)}}$
and that in some cases this is tight (see Theorem
1 and Section 2.3 ibid). Since BD chains are a particular case of chains
on trees, the bound on $w_n$ in Theorem \ref{thm:
treescutoff} is also tight. 

We note that the bound we get on the
rate of convergence ((\ref{eq: maintreeintro1})) is better than the estimate in \cite{ding2010total} (even for BD chains), which is $t_{\mathrm{mix}}(\epsilon) - t_{\mathrm{mix}}(1-\epsilon)
\le c\epsilon^{-1} \sqrt{t_{\mathrm{rel}}t_{\mathrm{mix}}} $ (Theorem
2.2). In fact, in Section \ref{sec: refined} we show that under the product condition, $d(t)$ decays in a sub-Gaussian
manner within the cutoff window. More precisely, we show that $t_{\mathrm{mix}}^{(n)}(\epsilon)
- t_{\mathrm{mix}}^{(n)}(1-\epsilon)
\le c \sqrt{t_{\mathrm{rel}}^{(n)}t_{\mathrm{mix}}^{(n)}|\log  \epsilon}| $. This is somewhat similar to 
Theorem 6.1 in \cite{diaconis2006separation}, which determines the ``shape" of the cutoff and describes a necessary
and sufficient spectral condition for the shape to be the density function of the
standard normal distribution.

Concentration
of hitting times was a key ingredient both in \cite{diaconis2006separation}
and \cite{ding2010total} (as it shall be here). Their proofs relied on several
properties which are specific to BD chains. Our proof of Theorem \ref{thm: treescutoff} can be adapted to the following setup.
Denote $[n]:=\{1,2,\ldots,n \}$.
\begin{definition}
\label{def:sbd}
For $n\in \N$ and $\delta, r >0$, we call a finite lazy reversible Markov chain, $([n],P,\pi)$,
a {\bf $(\delta,r)$-semi birth and death (SBD) chain} if 
\begin{itemize}
\item[(i)] For any $i,j \in [n]$ such that
$|i-j|>r$, we have $P(i,j)=0$.
\item[(ii)] For all $i,j \in [n] $
such that $|i-j|=1$, we have that $P(i,j) \ge \delta$. 
\end{itemize}
\end{definition}
This is a natural generalization of the class of birth and death chains.
Conditions (i)-(ii) tie the geometry of the chain to
that of the path $[n]$. We have the following theorem.
\begin{maintheorem}
\label{thm: semibd}
Let $([n_k],P_k,\pi_k)$ be a sequence of $(\delta,r)$-semi birth and death
chains, for some $\delta, r>0$, satisfying the product condition.  Then it exhibits a cutoff with a
cutoff window $w_k:=\sqrt{t_{\mathrm{mix}}^{(k)}t_{\mathrm{rel}}^{(k)}}$.
\end{maintheorem}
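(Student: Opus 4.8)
The strategy is to mirror the proof of Theorem~\ref{thm: treescutoff} (cutoff on trees), isolating the structural input about trees that is actually used and checking that $(\delta,r)$-SBD chains supply the same input. From the general characterization of cutoff in terms of concentration of hitting times of ``worst'' sets of stationary measure at least $\alpha$ (Proposition~\ref{prop: TVbound0} and the accompanying two-sided bounds on $d(t)$), cutoff with window $w_k=\sqrt{t_{\mathrm{mix}}^{(k)}t_{\mathrm{rel}}^{(k)}}$ follows once we show: under the product condition, for a suitable $\alpha\in(0,1)$, the hitting time of the relevant large set, started from the worst state, is concentrated on a scale $O(\sqrt{t_{\mathrm{mix}}t_{\mathrm{rel}}})$. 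The point is that on the path $[n]$ the ``worst'' large sets are intervals (initial or terminal segments), hitting such a set amounts to hitting a single vertex (the endpoint of the segment), and one can then exploit the one-dimensional geometry: a hitting time of a far vertex is a sum of passage times across ``cut vertices,'' each of which has mean at most $t_{\mathrm{mix}}$ and can be controlled in $L^2$ via $t_{\mathrm{rel}}$, while the summands behave nearly independently. For SBD chains we need to recover this picture despite jumps of size up to $r$ and the lack of exact birth--death structure.

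\textbf{Key steps.} (1) \emph{Reduce to the path.} Observe that conditions (i)--(ii) force the chain to cross any ``layer'' $\{i,i+1,\dots,i+r\}$ to get from $\{1,\dots,i-1\}$ to $\{i+r+1,\dots,n\}$; combined with $P(i,i\pm1)\ge\delta$ this lets us define, for a partition of $[n]$ into $O(n/r)$ consecutive blocks, a coarse-grained notion of ``distance'' along the path under which the walk moves by bounded steps and has uniformly positive probability of stepping toward either neighbor. (2) \emph{Identify the worst large sets.} Show that for an appropriate $\alpha$ the sets achieving (up to constants) the maximal hitting time from the worst starting point can be taken to be of the form $[j,n]$ or $[1,j]$ — i.e., complements of intervals — so that the relevant hitting time is $T_j$, the first time the walk reaches block $j$. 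This is where the tree argument is invoked: a path is a tree, and on trees the extremal large sets are subtrees hanging off an edge; one checks the SBD hypotheses give enough regularity that the same reduction goes through with constants depending only on $\delta,r$. (3) \emph{Concentration of $T_j$.} Decompose $T_j$ (from the worst endpoint) as a sum of successive block-to-block passage times $\tau_1+\tau_2+\cdots$. Bound $\sum \mathbb{E}\tau_\ell \le t_{\mathrm{mix}}^{(k)}(1+o(1))$ using the hitting-time/mixing-time comparison from Proposition~\ref{prop: TVbound0}, and bound $\mathrm{Var}(T_j)$ by $O(t_{\mathrm{rel}}^{(k)}\,\mathbb{E}T_j)=O(t_{\mathrm{rel}}^{(k)}t_{\mathrm{mix}}^{(k)})$ using reversibility and the spectral gap (each passage time across a ``bottleneck'' block has exponential-type tails on scale $\le C(\delta,r)t_{\mathrm{rel}}$, and the near-Markovian structure of successive passages makes variances essentially add). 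Under the product condition $\sqrt{t_{\mathrm{rel}}^{(k)}t_{\mathrm{mix}}^{(k)}}=o(t_{\mathrm{mix}}^{(k)})$, so Chebyshev gives the required concentration of $T_j$ on the window scale. (4) \emph{Feed back into the general theorem.} Plug this concentration into the necessary-and-sufficient criterion to conclude cutoff with window $w_k=\sqrt{t_{\mathrm{mix}}^{(k)}t_{\mathrm{rel}}^{(k)}}$.

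\textbf{Main obstacle.} The delicate point is Step (2) together with the variance bound in Step (3) in the genuine SBD generality: for birth--death chains the hitting time of a vertex from one side is literally a sum of independent segment-crossing times, which is what makes both the identification of worst sets and the additivity of variances transparent. With jumps up to size $r$ and only the lower bound $P(i,i\pm1)\ge\delta$ on nearest-neighbor rates (no control at all on the larger jumps beyond (i)), one must work with a coarse-grained chain on blocks and argue that (a) the extremal large sets are still essentially block-intervals, and (b) the block-to-block passage times, while not independent, have enough of a renewal/regeneration structure — or can be dominated by such — for the $L^2$ estimate $\mathrm{Var}(T_j)=O(t_{\mathrm{rel}}t_{\mathrm{mix}})$ to survive with constants depending only on $\delta$ and $r$. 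Making the regeneration argument quantitative and uniform over the sequence, so that the $o(\cdot)$ and $O(\cdot)$ constants do not secretly depend on $n_k$, is the crux; everything else is a translation of the tree proof.
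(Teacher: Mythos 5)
Your skeleton is the paper's: coarse-grain $[n_k]$ into blocks of length $r$, reduce cutoff (via the hitting-time characterization, Proposition \ref{prop: TVbound0} and Corollary \ref{prop: hitpqinequalities}) to concentration of the hitting time of a central block, and get concentration by Chebyshev from a bound $\Var_x[T_{I_{\tilde o}}]\le C(\delta,r)\,t_{\mathrm{rel}}\,\mathbb{E}_x[T_{I_{\tilde o}}]$ with $\mathbb{E}_x[T_{I_{\tilde o}}]=O(t_{\mathrm{mix}})$. However, the proposal stops exactly where the proof has to do work, and one intermediate claim is off. In your step (2) you assert that the worst sets of measure at least $\alpha$ may be taken to be half-intervals, citing the tree argument; neither the tree proof nor the SBD proof identifies extremal sets, and this claim is unjustified (and unnecessary). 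What the paper does instead is fix $\alpha=1-\frac{\delta^r}{4(r+\delta^r)}$, note by reversibility that $\pi(I_{\tilde o})\le \frac{r}{r+\delta^r}$, and show (Lemma \ref{cor: 7.2}, using the within-block comparison $\max_{y\in I}\mathbb{E}_y[T_A]\le\delta^{-r}\min_{x\in I}\mathbb{E}_x[T_A]$ of Lemma \ref{lem: bdlem1'} together with Lemma \ref{lem: AF1}) that from any state of the central block every set of measure at least $\alpha$ is hit within time $O_{\delta,r}(t_{\mathrm{rel}})$ with high probability; hitting the central block is then a ``certificate,'' exactly as in Lemma \ref{lem: TVboundstrees}, and Proposition \ref{p:sbdreduction} converts concentration of $T_{I_{\tilde o}}$ into cutoff with the stated window.

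The more serious gap is the variance bound itself, which you correctly flag as the crux but leave as an unspecified ``regeneration'' argument. The paper fills it with two concrete lemmas whose ideas are absent from your sketch: (a) a coupling using only $P(i,i\pm 1)\ge\delta$ (Lemma \ref{lem: bdlem1}) — two copies started anywhere in the same block can be forced, with probability at least $\delta^{r}$, to enter the parent block with identical hitting distributions, so hitting distributions $k$ blocks ahead differ by at most $(1-\delta^{r})^{k}$ in total variation, yielding the covariance decay $\mathbb{E}_\mu[\bar\tau_i\bar\tau_j]\le \mathbb{E}_\mu[\bar\tau_i]\,\mathbb{E}_\mu[\bar\tau_j]\bigl(1+(1-\delta^{r})^{j-i-1}\delta^{-r}\bigr)$ of Lemma \ref{lem: bdlem2}; and (b) a per-crossing second-moment bound (Lemma \ref{lem: Kac3}) coming from the Kac-type identity of Proposition \ref{prop: Kac}, $\mathbb{E}_{\psi_{B}}[T_{A}^{2}]=\mathbb{E}_{\psi_{B}}[T_{A}]\left(2\mathbb{E}_{\pi_{B}}[T_{A}]-1\right)\le 2t_{\mathrm{rel}}\mathbb{E}_{\psi_{B}}[T_{A}]/\pi(A)$, transferred to arbitrary starting states inside a block at cost $\delta^{-r}$ via Lemma \ref{lem: bdlem1'}. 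Your ``exponential tails on scale $t_{\mathrm{rel}}$'' is the right heuristic, but without (a) and (b) — which is where the hypotheses $P(i,j)=0$ for $|i-j|>r$ and $P(i,i\pm1)\ge\delta$ actually enter, with constants depending only on $\delta$ and $r$ — the plan does not yet amount to a proof.
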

We now introduce a new notion of mixing, which shall play a key role in this work.
\begin{definition}
\label{def: worstinprob}
Let $(\Omega,P,\pi)$ be an irreducible chain. For any $x \in 
\Omega$, $\alpha,\epsilon \in (0,1)$ and $t \ge 0$, define $p_{x}(\alpha,t):=\max_{A \subset \Omega:\, \pi(A) \ge \alpha}\Pr_{x}[T_{A}
> t]$, where $T_A:=\inf\{t:X_t \in A \}$ is the \textbf{\textit{hitting time}} of
the set $A$. Set $p(\alpha,t):= \max_{x}p_{x}(\alpha,t)$. We define
$$\mathrm{hit}_{\alpha,x}(\epsilon):=\min
\{t:p_{x}(\alpha,t) \le \epsilon \} \text{ and } \mathrm{hit}_{\alpha}(\epsilon):=\min
\{t:p(\alpha,t)\le \epsilon \}.$$
Similarly, we define $p_{x}^{\mathrm{ct}}(\alpha,t):=\max_{A
\subset \Omega:\, \pi(A) \ge \alpha}\h_{x}[T_{A}^{\mathrm{ct}}
> t] $ (where $T_A^{\mathrm{ct}}:=\inf\{t:X_t^{\mathrm{ct}} \in A \}$) and set $\mathrm{hit}_{\alpha}^{\mathrm{ct}}(\epsilon):=\min
\{t:p_x^{\mathrm{ct}}(\alpha,t)\le \epsilon \text{ for all }x\in \Omega \} $.
\end{definition}

\begin{definition}
\label{def: worstinprobcutoff}
Let $(\Omega_n,P_n,\pi_n)
$ be a sequence of irreducible chains and let $\alpha \in (0,1)$. We say
that the sequence exhibits a $\mathrm{hit}_{\alpha}
$-cutoff, if for any $\epsilon \in (0,1/4)$
$$\mathrm{hit}_{\alpha}^{(n)}(\epsilon)-\mathrm{hit}_{\alpha}^{(n)}(1-\epsilon)
=o \left(\mathrm{hit}_{\alpha}^{(n)}(1/4)
\right).
$$
\end{definition}

We are now ready to state our main abstract theorem. 
\begin{maintheorem}
\label{thm: psigmacutoffequiv}
Let $(\Omega_n,P_n,\pi_n)$ be a sequence of lazy reversible irreducible
finite chains. The following are equivalent:
\begin{itemize}
\item[1)] The sequence exhibits a cutoff.
\item[2)] The
sequence exhibits a $\mathrm{hit}_{\alpha}$-cutoff for some $\alpha \in (0,1/2]$.

\item[3)] The
sequence exhibits a $\mathrm{hit}_{\alpha}$-cutoff for some $\alpha \in (1/2,1)$  and  $t_{\mathrm{rel}}^{(n)}=o(t_{\mathrm{mix}}^{(n)})
$.
\end{itemize}
\end{maintheorem}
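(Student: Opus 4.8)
The plan is to prove Theorem~\ref{thm: psigmacutoffequiv} by establishing a two-sided comparison between the worst-case total variation distance $d^{(n)}(t)$ and the hitting-time tail $p^{(n)}(\alpha,t)$, and then feeding this comparison into the definitions of cutoff and $\hit_\alpha$-cutoff. The central analytic input must be a pair of inequalities of the following shape: for a lazy reversible chain, and for suitable constants depending on $\alpha$,
\begin{equation*}
p(\alpha, t + C\, t_{\mathrm{rel}} |\log \delta|) \le d(t) + \delta
\quad\text{and}\quad
d(t + C\, t_{\mathrm{rel}} |\log \delta|) \le p(\alpha, t) + \delta + (\text{error}).
\end{equation*}
I expect Proposition~\ref{prop: TVbound0} (referenced in the introduction as giving $t_{\mathrm{mix}}(\epsilon)$ up to an absolute constant in terms of hitting times of large sets) to supply essentially the second inequality, and a Markov/union-bound argument together with the relaxation-time smoothing of the chain to supply the first. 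The slack $C t_{\mathrm{rel}}|\log\delta|$ is exactly a cutoff-window-sized quantity once we know $t_{\mathrm{rel}}^{(n)} = o(t_{\mathrm{mix}}^{(n)})$, which is why the product condition appears in item~3).

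The structure of the argument would then be: First, show $(1)\Rightarrow(2)$ and $(1)\Rightarrow(3)$. Cutoff implies the product condition by Fact~\ref{fact: cutoffandtrel}, giving the spectral half of item~3). For the $\hit_\alpha$-cutoff half (for all $\alpha\in(0,1)$ simultaneously), use the two inequalities above: cutoff says $d^{(n)}$ drops from $1-\epsilon$ to $\epsilon$ over a window $w_n = o(t_{\mathrm{mix}}^{(n)})$; translate this through the comparison, absorbing the $C t_{\mathrm{rel}}^{(n)}|\log\epsilon| = o(t_{\mathrm{mix}}^{(n)})$ error, to conclude that $p^{(n)}(\alpha,\cdot)$ also drops from near $1$ to near $0$ over a window that is $o(t_{\mathrm{mix}}^{(n)})$; finally check $\hit_\alpha^{(n)}(1/4)$ is comparable to $t_{\mathrm{mix}}^{(n)}$ (again via the comparison, so that "$o(t_{\mathrm{mix}}^{(n)})$" and "$o(\hit_\alpha^{(n)}(1/4))$" coincide). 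Second, show $(2)\Rightarrow(1)$ and $(3)\Rightarrow(1)$. Here one runs the comparison in the other direction: a $\hit_\alpha$-cutoff forces $p^{(n)}(\alpha,\cdot)$ to be concentrated, and the second inequality turns this into concentration of $d^{(n)}$, i.e. cutoff --- provided the error term $C t_{\mathrm{rel}}^{(n)}|\log\epsilon|$ is negligible compared to the relevant mixing scale. When $\alpha\le 1/2$ (item~2)) one must argue that a $\hit_\alpha$-cutoff \emph{itself} implies the product condition (so no extra hypothesis is needed); when $\alpha>1/2$ (item~3)) this implication can fail, which is precisely why the product condition is imposed as an additional assumption there.

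The key technical lemma I would isolate and prove first is the equivalence, up to a relaxation-time additive correction, between $d^{(n)}$ and $p^{(n)}(\alpha,\cdot)$; concretely, something like: for a lazy reversible chain and any $\alpha\in(0,1)$, $\delta\in(0,1)$,
\begin{equation*}
\hit_\alpha(2\delta) \le t_{\mathrm{mix}}(\delta) + O_\alpha(1)\cdot t_{\mathrm{rel}}\log(1/\delta)
\quad\text{and}\quad
t_{\mathrm{mix}}(2\delta) \le \hit_\alpha(\delta) + O_\alpha(1)\cdot t_{\mathrm{rel}}\log(1/\delta),
\end{equation*}
the second being a restatement of the sharpened Peres--Sousi / Oliveira bound (Proposition~\ref{prop: TVbound0}), the first being the easier direction: if the chain is $\delta$-mixed at time $t$ then from stationarity every set of measure $\ge\alpha$ is hit quickly, and a coupling/strong-stationary-time-type argument with the spectral-gap smoothing transfers this to an arbitrary start with only an $O(t_{\mathrm{rel}}\log(1/\delta))$ penalty. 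I expect the main obstacle to be this transfer step in the "easy" direction --- making precise how a single step of hitting-time control from $\pi$ upgrades to control from an arbitrary starting state $x$ while only paying a relaxation-time-sized additive cost --- and, relatedly, the bookkeeping needed to show that a $\hit_\alpha$-cutoff with $\alpha\le1/2$ forces $t_{\mathrm{rel}}^{(n)}=o(\hit_\alpha^{(n)}(1/4)) = o(t_{\mathrm{mix}}^{(n)})$, which is what lets item~2) stand without the product condition as an extra hypothesis. Everything else is a matter of chaining these inequalities through Definitions~\ref{def: worstinprob}, \ref{def: worstinprobcutoff} and the cutoff definition.
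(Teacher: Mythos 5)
Your overall route is the same as the paper's: the two-sided comparison between $t_{\mathrm{mix}}(\cdot)$ and $\mathrm{hit}_{\alpha}(\cdot)$ with additive errors of order $t_{\mathrm{rel}}\log(1/\delta)$ is exactly Proposition \ref{prop: TVbound0} (one direction coming from the maximal-inequality bound of Corollary \ref{cor: hitprob}, the ``easy'' direction from precisely the coupling-plus-$\Pr_{\pi}[T_A>s]\le e^{-s\pi(A)/t_{\mathrm{rel}}}$ argument of Lemma \ref{lem: AF1} that you sketch), and chaining these through the definitions under the product condition does yield cutoff $\Leftrightarrow$ $\mathrm{hit}_{\alpha}$-cutoff for each fixed $\alpha$, together with $\mathrm{hit}_{\alpha}^{(n)}(1/4)=\Theta(t_{\mathrm{mix}}^{(n)})$ (your direct chaining through $t_{\mathrm{mix}}$ even bypasses the paper's intermediate step, Proposition \ref{prop: equivoftalphascutoff}, which is legitimate).

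The genuine gap is the step you label ``bookkeeping'': that a $\mathrm{hit}_{\alpha}$-cutoff with $\alpha\le 1/2$ by itself forces $t_{\mathrm{rel}}^{(n)}=o(t_{\mathrm{mix}}^{(n)})$. This cannot be extracted from your comparison lemma, whose error terms are of size $t_{\mathrm{rel}}\log(1/\delta)$ and hence carry no information precisely in the regime where the product condition fails ($t_{\mathrm{rel}}\asymp t_{\mathrm{mix}}$). In the paper it is a separate substantive result, Proposition \ref{prop: prodcondandhitcutoff}, and its proof requires an idea absent from your plan: take an eigenfunction $f_2$ of $\lambda_2$, fix its sign so that $A:=\{f_2\le 0\}$ satisfies $\pi(A)\ge 1/2$, start from the maximizer $x$ of $f_2$, and apply optional stopping to the martingale $\lambda_2^{-k}f_2(X_{k\wedge T_A})$ to obtain $\Pr_x[T_A>k]\ge \lambda_2^{k}$; when $t_{\mathrm{mix}}=O(t_{\mathrm{rel}})$ this gives $\mathrm{hit}_{\alpha}(\epsilon)\ge c\, t_{\mathrm{mix}}|\log\epsilon|$, while $\mathrm{hit}_{\alpha}(1-\epsilon)=O(t_{\mathrm{mix}})$ always (by submultiplicativity and the easy comparison), so the ratio $\mathrm{hit}_{\alpha}(\epsilon)/\mathrm{hit}_{\alpha}(1-\epsilon)$ stays bounded away from $1$ and there is no $\mathrm{hit}_{\alpha}$-cutoff (a separate laziness argument handles the case of bounded $t_{\mathrm{mix}}^{(n)}$). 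The threshold $\alpha\le 1/2$ is essential: Example \ref{ex: Sharpness} gives chains with $\mathrm{hit}_{\alpha}$-cutoff for every $\alpha\in(1/2,1)$ although the product condition fails, so any proof must exhibit a set of stationary measure at least $1/2$ whose hitting time is not concentrated, which is exactly what the eigenfunction sign-set provides. Without this ingredient your implication $2)\Rightarrow 1)$ is unproven; the rest of the plan is sound and matches the paper.
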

\begin{remark}
The proof of Theorem \ref{thm: psigmacutoffequiv} can be extended to the continuous-time case. In particular, it follows that a sequence of finite reversible chains exhibits cutoff iff the sequence of the continuous-time versions of these chains exhibits cutoff. This was previously proven in \cite{chen2013comparison} without the assumption of reversibility.
\end{remark}
\begin{remark}
In Example \ref{ex: Sharpness} we show that there exists a sequence of lazy reversible irreducible
finite Markov chains, $(\Omega_n,P_n,\pi_n)$, such that the product condition fails, yet for all $1/2<\alpha<1$ there is $\hit_{\alpha}$-cutoff. Thus the assertion of Theorem \ref{thm: psigmacutoffequiv} is sharp.
\end{remark}
At first glance $\mathrm{hit}_{\alpha}(\epsilon)$ may seem like a rather weak notion of
mixing compared to $t_{\mathrm{mix}}(\epsilon)$, especially when $\alpha$ is close to 1 (say, $\alpha=1-\epsilon$). The following proposition gives a quantitative version of Theorem \ref{thm: psigmacutoffequiv} (for simplicity we fix $\alpha=1/2$ in (\ref{eq: introTVbound1}) and (\ref{eq: introTVbound2})).
\begin{proposition}
\label{prop: TVbound0}
For any reversible irreducible finite lazy chain and any $\epsilon  \in (0,\frac{1}{4}]$,
\begin{equation}
\label{eq: introTVbound1}
  \mathrm{hit}_{1/2}(3\epsilon/2)-\left\lceil
2t_{\mathrm{rel}}
|\log  \epsilon | \right\rceil \le  t_{\mathrm{mix}}(\epsilon
)\le \mathrm{hit}_{1/2}( \epsilon/2)+\left\lceil t_{\mathrm{rel}} \log
\left(4/ \epsilon \right)\right\rceil \text{ and} 
\end{equation}
\begin{equation}
\label{eq: introTVbound2}
\mathrm{hit}_{1/2}(1-\epsilon/2)-\left\lceil 2t_{\mathrm{rel}}
|\log  \epsilon | \right\rceil   \le t_{\mathrm{mix}}(1-\epsilon) \le \mathrm{hit}_{1/2}(1-2\epsilon)+\left\lceil t_{\mathrm{rel}} \right\rceil.
\end{equation}
Moreover,
\begin{equation}
\label{eq: TVbound0}
\max \{ \mathrm{hit}_{1-\epsilon/4}(5\epsilon/4  ) , (t_{\mathrm{rel}}-1)|\log 2\epsilon
|\} \le t_{\mathrm{mix}}(\epsilon
) \le \mathrm{hit}_{1-\epsilon/4}(3\epsilon/4)+\left\lceil \frac{3t_{\mathrm{rel}}}{2}
\log \left(4/\epsilon \right)\right\rceil.
\end{equation}
Finally, if everywhere in  (\ref{eq: introTVbound1})-(\ref{eq: TVbound0}) $t_{\mathrm{mix}}$ and $\mathrm{hit}$ are replaced by $t_{\mathrm{mix}}^{\mathrm{ct}}$ and $\mathrm{hit}^{\mathrm{ct}}$, respectively, then (\ref{eq: introTVbound1})-(\ref{eq: TVbound0}) still hold (and all ceiling signs can be omitted).
\end{proposition}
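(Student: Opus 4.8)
The plan is to prove the two directions of each inequality separately, using two complementary ideas: (a) an upper bound on $d(t)$ in terms of $p(\alpha,s)$ for a slightly earlier time $s$, obtained by first running the chain until it hits a well-chosen ``worst'' large set and then arguing that after the set is hit only an extra $O(t_{\mathrm{rel}}\log(1/\epsilon))$ steps are needed to mix; and (b) a lower bound on $d(t)$ in terms of $p(\alpha,s)$ for a slightly later time $s$, obtained by using the hitting time of a large set as a distinguishing statistic (if a set $A$ with $\pi(A)\ge\alpha$ is unlikely to have been hit from $x$, then $\Pr_x^t$ puts little mass on $A$ while $\pi(A)\ge\alpha$ is large, forcing the TV distance up). Throughout, reversibility and laziness are used so that $P$ is a nonnegative self-adjoint operator with spectral gap $1-\lambda_2 = t_{\mathrm{rel}}^{-1}$, which lets us convert ``escaping a large set'' statements into ``mixing'' statements via an $L^2$ contraction argument.

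First I would establish the upper bounds. The key lemma is: for a reversible lazy chain, if $A$ is a set with $\pi(A)\ge 1-\eta$, then starting from $\pi$ conditioned to avoid $A$ (or more precisely, controlling the chain until $T_A$ and then restarting), the distribution relaxes to $\pi$ in $L^2$ at rate governed by $\lambda_2$; quantitatively one gets something like $4\|\Pr_x^{T_A+m}-\pi\|_{\mathrm{TV}}^2 \le \|\Pr_\mu^m/\pi - 1\|_2^2 \le \lambda_2^{2m}\|\mu/\pi-1\|_2^2$ for $\mu$ supported off a set of small $\pi$-measure, using that such $\mu$ has $L^2$-norm at most $O(1/\sqrt{\min\pi})$ — but this naive bound is too weak, so instead one should feed in a set $A$ chosen to be the $\epsilon/2$-worst set and use the strong Markov property at $T_A$ together with the fact that $\Pr_x^{T_A}$ is supported on the boundary of $A$, combined with an $L^2$-mixing estimate that loses only $t_{\mathrm{rel}}\log(1/\epsilon)$ steps (this is where the ceiling terms $\lceil t_{\mathrm{rel}}\log(4/\epsilon)\rceil$ come from). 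Decompose $d_x(t)$: on the event $\{T_A \le t - m\}$ the chain has had $m$ extra steps to mix from a point in $\overline{A}$; on the complementary event, which has probability at most $p_x(\alpha,t-m)$, bound the TV contribution by $1$. Optimizing $m = \lceil t_{\mathrm{rel}}\log(4/\epsilon)\rceil$ yields $d(t) \le p(1/2, t-m) + \epsilon/2$, which rearranges to $t_{\mathrm{mix}}(\epsilon) \le \mathrm{hit}_{1/2}(\epsilon/2) + m$, and the analogous computation with $\alpha = 1-\epsilon/4$ gives the upper bound in (\ref{eq: TVbound0}); the extra factor $3/2$ there reflects that a large set may itself take $\Theta(t_{\mathrm{rel}})$ steps to ``internally average'' once hit.

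Next I would establish the lower bounds. Here the argument is softer: for the lower bound in (\ref{eq: introTVbound1}), I want to show $t_{\mathrm{mix}}(\epsilon) \ge \mathrm{hit}_{1/2}(3\epsilon/2) - \lceil 2t_{\mathrm{rel}}|\log\epsilon|\rceil$. Pick a state $x$ and a set $A$ with $\pi(A)\ge 1/2$ witnessing $p_x(1/2, s) > 3\epsilon/2$ at time $s = \mathrm{hit}_{1/2}(3\epsilon/2) - 1$. If we also knew $\Pr_x^s(A)$ is not too large, we would be done immediately since $|\Pr_x^s(A) - \pi(A)| \le d_x(s)$; the issue is that $\{T_A > s\}$ large does not directly bound $\Pr_x^s(A)$. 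To bridge this, use that after hitting $A$ the chain can leave, so one should instead compare at a slightly later time: run for an additional $k := \lceil 2t_{\mathrm{rel}}|\log\epsilon|\rceil$ steps and note that $\Pr_x^{s+k}(A^c) \ge \Pr_x[T_A > s]\cdot \min_{y}\Pr_y[\text{still in }A^c\text{-ish}]$ — more cleanly, use the spectral bound to say that conditioned on $T_A>s$, the chain at time $s$ is at a point from which, within $k$ steps, it has probability $\ge 1 - \epsilon/2$ of not having equilibrated onto $A$, because escaping a ``trap'' faster than $t_{\mathrm{rel}}$-time scale is spectrally forbidden. This gives $\Pr_x^{s+k}(A^c) \ge p_x(1/2,s) - \epsilon/2 \ge \epsilon$, hence $d_x(s+k) \ge \pi(A^c)^{-1}$-type bound... actually more simply $d(s+k) \ge \Pr_x^{s+k}(A) - \pi(A)$ or its complement exceeds a constant multiple of $\epsilon$; I would chase the constants to land exactly on $3\epsilon/2 - \epsilon/2 = \epsilon$. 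The inequalities (\ref{eq: introTVbound2}) are the ``distance near $1$'' analogues and follow by the same two mechanisms applied near TV distance $1$ rather than near $0$, and (\ref{eq: TVbound0})'s lower bound combines $\mathrm{hit}_{1-\epsilon/4}(5\epsilon/4) \le t_{\mathrm{mix}}(\epsilon)$ (same hitting-as-statistic argument) with the standard spectral lower bound $(t_{\mathrm{rel}}-1)|\log 2\epsilon| \le t_{\mathrm{mix}}(\epsilon)$ quoted in (\ref{eq: t_relintro}).

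Finally, the continuous-time statement: since $H_t = e^{-t(I-P)}$ has the same eigenvectors as $P$ with eigenvalues $e^{-t(1-\lambda_i)}$, every spectral estimate above transfers verbatim with $\lambda_2^m$ replaced by $e^{-m/t_{\mathrm{rel}}}$, and the discreteness-induced ceilings disappear because the ``extra time to mix after hitting'' can be taken to be exactly $t_{\mathrm{rel}}\log(4/\epsilon)$ rather than its ceiling; the strong Markov property and the hitting-time-as-statistic argument are insensitive to the time parametrization. I expect the main obstacle to be the upper-bound lemma: making precise the claim that, once a carefully chosen large set is hit, only $O(t_{\mathrm{rel}}\log(1/\epsilon))$ further steps suffice to reach TV distance $\epsilon$ from a \emph{worst} starting point on that set's boundary — this requires choosing $A$ to simultaneously be hit quickly and to have a nice boundary, and the honest $L^2$ argument needs the observation that $\Pr_\mu^m/\pi - 1$ can be split into its projection onto the top eigenspace (which vanishes) and the rest (which contracts by $\lambda_2^m$), with the initial $L^2$ norm controlled not by $1/\min\pi$ but by a bound that already incorporates that we have mixed ``up to the complement of a small set.''
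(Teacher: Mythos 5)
Your overall architecture (hit a large set, then pay an extra $O(t_{\mathrm{rel}}\log(1/\epsilon))$; lower bounds via hitting times as distinguishing statistics plus the spectral bound from (\ref{eq: t_relintro})) matches the paper's in spirit, and the easy pieces — $\mathrm{hit}_{1-\epsilon/4}(5\epsilon/4)\le t_{\mathrm{mix}}(\epsilon)$, the $(t_{\mathrm{rel}}-1)|\log 2\epsilon|$ term, and the continuous-time transfer — are handled correctly. But both of the substantive directions have genuine gaps. For the upper bounds, your plan rests on the claim that once the chain hits a suitably chosen set of measure at least $1/2$, only $\lceil t_{\mathrm{rel}}\log(4/\epsilon)\rceil$ further steps are needed to mix in total variation from a worst point of that set; this is false for arbitrary large sets (e.g.\ a biased walk on a segment: a set of measure $1/2$ can contain states from which TV-mixing still takes order $t_{\mathrm{mix}}\gg t_{\mathrm{rel}}\log(1/\epsilon)$ steps), and you acknowledge at the end that you do not know how to choose the set or control the $L^2$ norm from its boundary. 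The paper's resolution is structurally different: the set to be hit is \emph{not} the distinguishing set $A$ itself but the ``good set'' $G_{s}(A,m)=\{y:|\Pr_y^k(A)-\pi(A)|<m\sigma_s\ \forall k\ge s\}$, built from the maximal function and shown to have measure $\ge 1-8/m^2$ via Starr's $L^2$ maximal inequality combined with the $L^2$-contraction lemma (Corollary \ref{cor: maxergcor}); hitting $G$ certifies closeness to $\pi$ \emph{only on the single set $A$} achieving the TV distance at the target time, which is all that is needed, and this is what makes the loss only $t_{\mathrm{rel}}\log(1/\epsilon)$ rather than $t_{\mathrm{rel}}\log(1/\min_x\pi(x))$. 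Your proposed fix (split $\Pr_\mu^m/\pi-1$ off the top eigenspace and contract) does not close this gap, because the initial $L^2$ norm from a worst boundary point is not controlled.

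For the lower bounds in (\ref{eq: introTVbound1})--(\ref{eq: introTVbound2}), your mechanism is also not the right one: you argue that conditioned on $\{T_A>s\}$ the chain remains outside $A$ for another $\lceil 2t_{\mathrm{rel}}|\log\epsilon|\rceil$ steps with probability $\ge 1-\epsilon/2$ because ``escaping faster than $t_{\mathrm{rel}}$ is spectrally forbidden''; there is no such bound from a worst-case state of $A^c$ (a state adjacent to $A$ enters $A$ in one step with constant probability), and in any case you are estimating non-mixing at time $s+k$ when the claimed inequality requires exhibiting non-mixing at time $s-k$. The paper's argument is the reverse coupling: with $s_\epsilon:=\lceil 2t_{\mathrm{rel}}|\log\epsilon|\rceil$, couple $\Pr_x^t$ with a stationary chain to get $\Pr_x[T_A>t+s_\epsilon]\le d(t)+\Pr_\pi[T_A>s_\epsilon]$, and then use the stationary hitting-tail estimate $\Pr_\pi[T_A>u]\le\pi(A^c)e^{-u\pi(A)/t_{\mathrm{rel}}}$ (Lemma \ref{lem: AF1}, proved via the spectral analysis of the substochastic restriction $P_{A^c}$, which is where reversibility enters), giving $\Pr_\pi[T_A>s_\epsilon]\le\epsilon/2$ and hence $\mathrm{hit}_{1/2}(3\epsilon/2)\le t_{\mathrm{mix}}(\epsilon)+s_\epsilon$ and $\mathrm{hit}_{1/2}(1-\epsilon/2)\le t_{\mathrm{mix}}(1-\epsilon)+s_\epsilon$. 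You never state this estimate on hitting from stationarity, and without it (or the maximal-inequality good-set device above) neither of the two nontrivial directions of the proposition is established.
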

\begin{remark}
Define $\rel^{\mathrm{absolute}}:=\max \{(1-\lambda_2)^{-1},(1-|\lambda_{|\Omega |}|)^{-1} \}$. Our only use of the laziness assumption is to argue that $\rel =\rel^{\mathrm{absolute}}$. In particular, Proposition \ref{prop: TVbound0} holds also without the laziness assumption if one replaces $\rel$ by $\rel^{\mathrm{absolute}}$. Similarly, without the laziness assumption the assertion of Theorem \ref{thm: psigmacutoffequiv} should be transformed as follows. A sequence of finite irreducible aperiodic reversible Markov chains exhibits cutoff iff $(\rel^{\mathrm{absolute}})^{(n)}=o(\mix^{(n)}) $ and there exists some $0<\alpha<1$ such that the sequence exhibits $\hit_{\alpha}$-cutoff.

Note that for any finite irreducible reversible chain, $(\Omega,P,\pi)$,
it suffices to consider a $\delta$-lazy version of the chain, $P_{\delta}:=(1-\delta)P+\delta
I$, for some $\delta \ge \frac{1-\max\{\lambda_2
, 0\} }{2}$, to ensure that $\rel
=\rel^{\mathrm{absolute}}$ (which by the previous paragraph, guarantees that
all near-periodicity issues are completely avoided).
\end{remark}  
Loosely speaking, we show that the mixing
of a lazy reversible
Markov chain can be partitioned into two stages as follows. The first is
the time it takes the chain to escape from some small set with sufficiently
large probability.  In the
second stage, the chain mixes at the
fastest possible rate (up to a small constant), which is governed by its
relaxation-time.

It follows from Proposition \ref{prop: submultiplicativityofhit} that the ratio of the LHS and the RHS of (\ref{eq: TVbound0}) is bounded by an absolute constant independent of $\epsilon$. Moreover, (\ref{eq: TVbound0}) bounds $t_{\mathrm{mix}}(\epsilon)$ in terms of hitting distribution of sets of $\pi$ measure tending to 1 as $\epsilon$ tends to 0. In (\ref{eq: hitTV6}) we give a version of (\ref{eq: TVbound0}) for sets of arbitrary $\pi$ measure.  

Either of the two terms appearing in the sum in RHS of (\ref{eq: TVbound0}) may dominate the other. For lazy random walk on two $n$-cliques connected by a single edge, the terms in (\ref{eq: TVbound0}) involving $\mathrm{hit}_{1-\epsilon/4}$ are negligible.
For a sequence of chains satisfying the product condition, all terms in Proposition \ref{prop: TVbound0} involving $t_{\mathrm{rel}}$ are negligible. Hence the assertion of Theorem \ref{thm: psigmacutoffequiv}, for $\alpha=1/2$, follows easily from (\ref{eq: introTVbound1}) and (\ref{eq: introTVbound2}), together with the fact that  $\mathrm{hit}_{1/2}^{(n)}( 1/4)=\Theta(t_{\mathrm{mix}}^{(n)})$. In Proposition \ref{prop: equivoftalphascutoff}, under the assumption that the product condition holds, we prove this fact and show that in fact, if the sequence exhibits $\mathrm{hit}_{\alpha}$-cutoff for some $\alpha \in (0,1)$, then it exhibits $\mathrm{hit}_{\beta}$-cutoff
for all $\beta \in (0,1)$.

\bigskip

An extended abstract of this paper appeared in the proceedings of \emph{ACM-SIAM Symposium on Discrete Algorithms} (SODA), 2015.

\subsection{Related work}
\label{s:related}
The idea that expected hitting times of sets which are ``worst in expectation"
(in the sense of (\ref{eq: tHalpha}) below) could be related to the mixing
time is quite old and goes back to Aldous' 1982 paper \cite{Aldous82}. A
similar result was obtained later by  Lov\'asz and Winkler (\cite{Lovasz98}
Proposition 4.8). 

This aforementioned connection was substantially refined recently by Peres
and Sousi (\cite{peres2011mixing} Theorem 1.1) and independently by Oliveira
(\cite{oliveira2012mixing} Theorem 2). Their approach relied on the theory
of random times to stationarity combined with a certain ``de-randomization"
argument which shows that for any lazy reversible irreducible finite chain
and any stopping time $T$ such that $X_{T} \sim \pi$, $t_{\mathrm{mix}} =
O(\max_{x \in \Omega} \mathbb{E}_x[T])$. As a (somewhat indirect) consequence,
they showed that for any $0<\alpha<1/2$ (this was extended to $\alpha=1/2$
in \cite{griffiths2012tight}), there exist some constants $c_{\alpha},c'_{\alpha}>0$
such that for any lazy reversible irreducible finite chain
\begin{equation}
\label{eq: tHalpha}
c'_\alpha t_{\mathrm{H}}(\alpha) \leq t_{\mathrm{mix}} \leq c_\alpha t_{\mathrm{H}}(\alpha),
\text{ where }t_{\mathrm{H}}(\alpha):=\max_{x \in \Omega }t_{\mathrm{H},x}(\alpha)\text{
and } t_{\mathrm{H},x}(\alpha):= \max_{A \subset \Omega :\,\pi(A)
\ge \alpha}\mathbb{E}_{x}[T_{A}].
\end{equation}

This work was greatly motivated by the aforementioned results. It is natural
to ask whether (\ref{eq: tHalpha}) could be further refined so that the cutoff
phenomenon could be characterized in terms of concentration of the hitting
times of a sequence of sets $A_n \subset \Omega_n$ which attain the maximum in the
definition of $t_{\mathrm{H}}^{(n)}(1/2)$
(starting from the worst initial states). Corollary 1.5 in \cite{Hermon}
asserts that this is indeed the case in the transitive setup. More generally,
Theorem 2 in \cite{Hermon} asserts that this is indeed the case for any fixed
sequence of initial states $x_n \in \Omega_n$ if one replaces $t_{\mathrm{H}}^{(n)}(1/2)$
and $d^{(n)}(t)$ by   $t_{\mathrm{H},x_n}^{(n)}(1/2)$
and $d_{x_{n}}^{(n)}(t)$ (i.e.~when the hitting times and the mixing times
are defined only w.r.t.~these starting states). Alas, Proposition 1.6 in
\cite{Hermon} asserts that in general cutoff could not be characterized in
this manner.

In \cite{lancia2012entropy}, Lancia et al.~established a sufficient condition for cutoff  which does not rely on reversibility. However, their condition includes the strong assumption that for some $A_n \subset \Omega_n$ with $\pi_n(A_n) \ge c > 0$, starting from any $x \in A_n$, the $n$-th chain mixes in $o(t_{\mathrm{mix}}^{(n)})$ steps.
\subsection{An overview of our techniques}
The most important tool we shall utilize is Starr's
$L^2$ maximal inequality (Theorem
\ref{thm: maxergodic}). Relating it to the study of mixing-times of reversible
Markov chains is one of the main contributions of this work.\begin{definition}
\label{def: Goodset}
Let $(\Omega,P,\pi)$ be a finite reversible irreducible lazy chain. Let $A
\subset \Omega$, $s \ge 0$ and $m>0$. Denote $\rho(A):=\sqrt{\Var_{\pi}1_A}=\sqrt{\pi(A)(1-\pi(A))}
$.  Set $\sigma_s:=e^{-s/t_{\mathrm{rel}}}\rho(A)$.  We define
\begin{equation}
\label{eq: GtAm}
G_{s}(A,m):=\left\{ y: |\Pr_y^{k}(A)-\pi(A)| < m\sigma_s \text{
for all }k \ge s \right\}.
\end{equation}
We call the set $G_{s}(A,m)$ the \emph{good set for} $A$ \emph{from time}
$s$ \emph{within }$m$ \emph{standard-deviations}.
\end{definition}
As a simple corollary of Starr's
$L^2$ maximal inequality and the  $L^2$-contraction
lemma we show in Corollary \ref{cor: maxergcor} that for any non-empty $A \subset \Omega$ and any $m,s \ge 0$ that $\pi(G_{s}(A,m))\ge 1-8/m^2$.  To demonstrate the main
idea of our approach we  prove the following inequalities.
\begin{equation}
\label{eq: mainidea}
t_\mathrm{mix}(2\epsilon) \le \mathrm{hit}_{1-\epsilon}(\epsilon)+\left\lceil
\frac{t_{\mathrm{rel}}}{2} \log
\left(\frac{ 2}{\epsilon^3 }\right) \right\rceil. 
\end{equation}
\begin{equation}
\label{eq: mainidea2}
\mathrm{hit}_{1-\epsilon}(1-2\epsilon)\ge t_\mathrm{mix}(1-\epsilon) -\left\lceil
\frac{t_{\mathrm{rel}}}{2} \log
\left(\frac{ 8}{\epsilon^2 }\right) \right\rceil. 
\end{equation}
We first prove (\ref{eq: mainidea}). Fix $A \subset \Omega$ be non-empty. Let $x \in \Omega$.  Let $s,t,m \ge 0$ to be defined
shortly. Denote $G:=G_{s}(A,m)$.
We want this set to be
of size at least $1-\epsilon$. By Corollary \ref{cor: maxergcor} we know
that $\pi(G) \ge 1-8/m^2$. Thus we pick $m=\sqrt{8/\epsilon}$. The precision in (\ref{eq: GtAm}) is  $m\sigma_s
\le \sqrt{8/\epsilon}(\sqrt{\Var_{\pi}1_{A} }e^{-s/t_{\mathrm{rel}}} )\le \sqrt{2/\epsilon}e^{-s/t_{\mathrm{rel}}}$.
We also want
$\epsilon$ precision. Hence we pick $s:=\left\lceil
\frac{t_{\mathrm{rel}}}{2} \log
\left(\frac{ 2}{\epsilon^3 }\right) \right\rceil$. 

We seek to bound $|\Pr_{x}^{t+s}(A)-\pi(A)|$. If  $|\Pr_{x}^{t+s}(A)-\pi(A)|
\le 2\epsilon$, then the chain is ``$2\epsilon$-mixed w.r.t.~$A$". This is
where we use the set $G$. We now demonstrate that for any $t \ge 0$, hitting
$G$ by time $t$ serves as a ``certificate" that the chain is $\epsilon$-mixed w.r.t.~$A$
at time $t+s$. Indeed, from the Markov property and the definition of $G$,
$$|\Pr_{x}[X_{t+s} \in A \mid T_{G} \le t]-\pi(A)| \le \max_{g \in G} \sup_{s'
\ge s}|\Pr_{g}^{s'}(A)-\pi| \le \epsilon.$$
In particular,
\begin{equation}
\label{eq: usingthegoodset}
|\Pr_{x}^{t+s}(A)-\pi(A)| \le \Pr_x[T_{G}>t]+|\Pr_{x}[X_{t+s} \in A \mid
T_{G} \le t]-\pi(A)| \le \Pr_x[T_{G}>t]+\epsilon.
\end{equation}
We seek to have the bound $\Pr_x[T_{G}>t] \le \epsilon$. Recall that by our
choice of $m$ we have that $\pi(G) \ge 1-\epsilon$. Thus if we pick $t:=\mathrm{hit}_{1-\epsilon}(\epsilon)
$, we guarantee that, regardless of the identity of $A$ and $x$, we indeed
have that $\Pr_x[T_{G}>t] \le \epsilon $. Since  $x$ and $A$ were arbitrary, plugging this into (\ref{eq: usingthegoodset})
yields (\ref{eq: mainidea}). We now prove (\ref{eq: mainidea2}).

We now set $r:=t_\mathrm{mix}(1-\epsilon)-1$. Then there exist some $x\in \Omega$ and $A \subset \Omega$ such that $\pi(A)-\Pr_{x}^{r}(A) > 1-\epsilon $. In particular, $\pi(A)>1-\epsilon$. Consider again $G_2:=G_{s_2}(A,m)$. Since again we seek the size of $G_2$ to be at least $1-\epsilon$, we again choose $m=\sqrt{8/\epsilon}$. The precision
in (\ref{eq: GtAm}) is  $m\sigma_{s_{2}}
\le \sqrt{8/\epsilon}(\sqrt{\Var_{\pi}1_{A} }e^{-s_{2}/t_{\mathrm{rel}}} )\le
\sqrt{8/\epsilon}(\sqrt{1-\pi(A)} e^{-s_{2}/t_{\mathrm{rel}}}) \le \sqrt{8}e^{-s_{2}/t_{\mathrm{rel}}}$.
We again seek
$\epsilon$ precision. Hence we pick $s_{2}:=\left\lceil
\frac{t_{\mathrm{rel}}}{2} \log
\left(\frac{ 8}{\epsilon^2 }\right) \right\rceil$. As in (\ref{eq: usingthegoodset}) (with $r-s_2$ in the role of $t$ and $s_2$ in the role of $s$) we have that
$$\Pr_x[T_{G_2}>r-s_2] \ge \pi(A)-\Pr_{x}^{r}(A)-\epsilon > 1-2\epsilon.$$
Hence it must be the case that $\mathrm{hit}_{1-\epsilon}(1-2\epsilon)>r-s_2  = t_\mathrm{mix}(1-\epsilon) -1-\left\lceil
\frac{t_{\mathrm{rel}}}{2} \log
\left(\frac{ 8}{\epsilon^2 }\right) \right\rceil$.
\section{Maximal inequality and applications}
\label{s:trelimplications}
In this section we present the machinery that will be utilized in the proof of the main results. Here and in Section \ref{sec: 3} we only treat the discrete-time chain. The necessary adaptations for the continuous-time case are explained in Section \ref{sec: ct}.  We start with a few basic definitions and facts.

\begin{definition}
\label{def: L_p distance of measures}
Let $(\Omega,P,\pi)$ be a finite reversible chain. For any $f \in \R^{\Omega}$, let $\mathbb{E}_{\pi}[f]:=\sum_{x \in \Omega}\pi(x)f(x)$ and $\Var_{\pi}f:=\mathbb{E}_{\pi}[(f-\mathbb{E}_{\pi}f)^{2}]$. The inner-product $\langle \cdot,\cdot \rangle_{\pi}$ and $L^{p} $ norm are
$$\langle f,g\rangle_{\pi}:=\mathbb{E}_{\pi}[fg] \text{ and } \|f \|_p:=\left( \mathbb{E}_{\pi}[|f|^{p}]\right)^{1/p},\, 1 \le p < \infty$$
We identify the matrix $P^{t}$  with the operator $P^{t}:L^{p}(\R^{\Omega},\pi) \to L^{p}(\R^{\Omega},\pi)$ defined by $P^{t}f(x):=\sum_{y \in \Omega}P^{t}(x,y)f(y)=\mathbb{E}_{x}[f(X_{t})]$. Then by reversibility $P^{t}:L^2 \to L^2$ is a self-adjoint operator. 
\end{definition}
The spectral decomposition in discrete time takes the following form.
If $f_1,\ldots,f_{|\Omega|}$ is an orthonormal basis of $L^{2}(\R^{\Omega},\pi)$ such that $Pf_i:=\lambda_i
f_i$ for all $i$, then $P^t g=\mathbb{E}_{\pi}P^t g+ \sum_{i=2}^{|\Omega|}
\langle g,f_{i}\rangle_{\pi}\lambda_i^tf_i$, for all
$g \in \R^{\Omega}$ and $t \ge 0$. The following lemma is standard. It is proved using the spectral decomposition in a straightforward manner.\begin{lemma}[$L^2$-contraction Lemma]
\label{lem: L2exp}
Let $(\Omega,P,\pi)$ be a finite lazy reversible irreducible Markov chain. Let $f \in \R^{\Omega}$.  Then

\begin{equation}
\label{eq: L2contraction0}
\Var_{\pi}P^tf \le e^{-2t/t_{\mathrm{rel}}}\Var_{\pi}f, \text{ for any }t \ge 0.
\end{equation}
\end{lemma}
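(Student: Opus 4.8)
The plan is to prove Lemma \ref{lem: L2exp} directly from the spectral decomposition of $P^t$ recalled just above the statement, using the laziness hypothesis only at the single point where one needs all eigenvalues to be nonnegative.

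First I would fix an orthonormal basis $f_1\equiv 1, f_2,\ldots,f_{|\Omega|}$ of $L^2(\R^\Omega,\pi)$ with $Pf_i=\lambda_i f_i$, and expand $f=\mathbb{E}_\pi f+\sum_{i=2}^{|\Omega|}\langle f,f_i\rangle_\pi f_i$, using $\langle f,f_1\rangle_\pi=\mathbb{E}_\pi f$. The stated spectral identity gives $P^t f=\mathbb{E}_\pi f+\sum_{i=2}^{|\Omega|}\langle f,f_i\rangle_\pi \lambda_i^t f_i$, and since $\mathbb{E}_\pi P^tf=\mathbb{E}_\pi f$ (as $\pi P=\pi$), orthonormality of the $f_i$ yields
\[
\Var_\pi P^tf=\sum_{i=2}^{|\Omega|}\langle f,f_i\rangle_\pi^2\,\lambda_i^{2t},\qquad \Var_\pi f=\sum_{i=2}^{|\Omega|}\langle f,f_i\rangle_\pi^2 .
\]

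Next I would bound $\lambda_i^{2t}$ uniformly over $i\ge 2$. This is where laziness enters: $P(x,x)\ge 1/2$ for all $x$ forces $\lambda_{|\Omega|}\ge 0$, so every eigenvalue lies in $[0,1]$, and by the ordering $1>\lambda_2\ge\lambda_i$ for all $i\ge 2$; hence $0\le \lambda_i^{2t}\le\lambda_2^{2t}$ for every $t\ge 0$. (For non-integer $t$ the object $P^t$ is interpreted via this same spectral expansion, which is legitimate precisely because the $\lambda_i$ are nonnegative.) Plugging this into the displayed identities gives $\Var_\pi P^tf\le\lambda_2^{2t}\,\Var_\pi f$.

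Finally, applying $1+x\le e^x$ with $x=-(1-\lambda_2)=-1/t_{\mathrm{rel}}$ gives $\lambda_2=1-1/t_{\mathrm{rel}}\le e^{-1/t_{\mathrm{rel}}}$, so $\lambda_2^{2t}\le e^{-2t/t_{\mathrm{rel}}}$, and combined with the previous bound this is exactly (\ref{eq: L2contraction0}). There is no real obstacle in this argument; the only point worth flagging is that getting the clean contraction factor $\lambda_2^{2t}$ — rather than $\max\{\lambda_2,|\lambda_{|\Omega|}|\}^{2t}$ — is exactly what uses laziness, consistent with the paper's remark that laziness serves only to ensure $t_{\mathrm{rel}}=t_{\mathrm{rel}}^{\mathrm{absolute}}$.
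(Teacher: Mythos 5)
Your proof is correct and follows exactly the route the paper indicates, namely the straightforward spectral-decomposition argument (orthonormal eigenbasis, laziness forcing all eigenvalues into $[0,1]$ so that $\lambda_i^{2t}\le\lambda_2^{2t}$, and then $1-x\le e^{-x}$). Nothing is missing, and your remark that laziness is used only to rule out the $|\lambda_{|\Omega|}|$ term matches the paper's own comment about $t_{\mathrm{rel}}^{\mathrm{absolute}}$.
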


We now state a particular case of Starr's maximal inequality (\cite{starr1966operator} Theorem 1). It is similar to Stein's maximal inequality
(\cite{stein1961maximal}), but gives the best possible constant. For the sake of completeness we also prove Theorem \ref{thm: maxergodic} at the end of this section.
\begin{theorem}[Maximal inequality]
\label{thm: maxergodic}
Let $(\Omega,P,\pi)$ be a reversible irreducible Markov chain. Let $1<p<\infty$. Then for any $f \in L^{p}(\R^{\Omega},\pi)
$,
\begin{equation}
\label{eq: ergodic1}
\|f^{*} \|_{p} \le \left( \frac{p}{p-1} \right) \|f \|_p,
\end{equation}
where $f^* \in \R^\Omega$ is the corresponding \textbf{\textit{maximal function at even times}}, defined as $$f^{*}(x):=\sup_{0 \le k < \infty}|P^{2k}(f)(x)|=\sup_{0 \le k < \infty}|\mathbb{E}_{x}[f(X_{2k})]|.$$
\end{theorem}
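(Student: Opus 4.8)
The plan is to derive the inequality from Rota's dilation of a reversible Markov operator into a reverse martingale, followed by Doob's $L^{p}$ maximal inequality, which already carries the optimal constant $p/(p-1)$. I would work on the path space of the stationary chain: let $(X_j)_{j\ge0}$ have the law of $(\Omega,P,\pi)$ started from $\pi$, so that $X_j\sim\pi$ for every $j$, and set $\mathcal F_k:=\sigma(X_j:j\ge k)$, a \emph{decreasing} family of $\sigma$-algebras. Since $\Omega$ is finite the fixed $f\in\R^{\Omega}$ is bounded, so every random variable below is bounded and all expectations are finite.

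The first step is the dilation identity. By reversibility, for $y$ in the support of $\pi$ one has $\Pr_{\pi}[X_0=x\mid X_k=y]=\pi(x)P^{k}(x,y)/\pi(y)=P^{k}(y,x)$, so $\mathbb E[f(X_0)\mid X_k]=(P^{k}f)(X_k)$; combined with the Markov property---under which $X_0$ is conditionally independent of $(X_{k+1},X_{k+2},\dots)$ given $X_k$---this gives
\begin{equation}
M_k:=\mathbb E\big[f(X_0)\mid\mathcal F_k\big]=(P^{k}f)(X_k),\qquad k\ge0.
\end{equation}
As the $\mathcal F_k$ decrease and $M_0=f(X_0)$, the process $(M_k)_{k\ge0}$ is a reverse martingale. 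Conditioning once more on $X_0$ and using the Markov property again yields the crucial relation
\begin{equation}
(P^{2k}f)(x)=\mathbb E\big[(P^{k}f)(X_k)\mid X_0=x\big]=\mathbb E\big[M_k\mid X_0=x\big].
\end{equation}

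Next I would pass to the maximal function. Put $M^{*}:=\sup_{k\ge0}|M_k|$. The last display gives $|(P^{2k}f)(x)|\le\mathbb E[M^{*}\mid X_0=x]$ for every $k$, hence $f^{*}(x)\le\mathbb E[M^{*}\mid X_0=x]$ pointwise. By conditional Jensen (using $p>1$) and the law of total expectation ($X_0\sim\pi$),
\begin{equation}
\|f^{*}\|_{p}^{p}=\sum_{x}\pi(x)\,f^{*}(x)^{p}\le\sum_{x}\pi(x)\,\mathbb E\big[(M^{*})^{p}\mid X_0=x\big]=\mathbb E_{\pi}\big[(M^{*})^{p}\big].
\end{equation}
For each $N$, $(M_N,M_{N-1},\dots,M_0)$ is a martingale with respect to $(\mathcal F_N,\dots,\mathcal F_0)$, so Doob's $L^{p}$ maximal inequality gives $\big\|\max_{0\le k\le N}|M_k|\big\|_{p}\le\tfrac{p}{p-1}\|M_0\|_{p}=\tfrac{p}{p-1}\|f\|_{p}$; letting $N\to\infty$ (monotone convergence) yields $\|M^{*}\|_{p}\le\tfrac{p}{p-1}\|f\|_{p}$. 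Combining with the previous display and taking $p$-th roots proves the theorem.

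Finally, a word on structure and on where the difficulty lies. The even powers $P^{2k}$ arise exactly from the second identity: composing the reverse-martingale value $M_k=(P^{k}f)(X_k)$ with the conditional expectation given $X_0$ squares the operator, which is also why laziness---equivalently, positive-definiteness of $P$---is never used. I do not expect a serious obstacle; the only delicate points will be justifying the dilation identity $M_k=(P^{k}f)(X_k)$ from reversibility together with the Markov property, and applying Doob's inequality in its reverse-martingale form (via truncation at level $N$ and monotone convergence). If one prefers to avoid spelling out the dilation, the first step may simply be replaced by a citation to Rota's theorem, after which the rest goes through verbatim.
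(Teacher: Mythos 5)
Your proposal is correct and follows essentially the same route as the paper's proof: run the stationary chain, use reversibility to realize $P^{2k}f(X_0)$ as the conditional expectation given $X_0$ of a reverse martingale (the paper's $R_k=\mathbb{E}[f(X_{2k})\mid X_k]$ is exactly your $M_k=(P^kf)(X_k)$), then apply Doob's $L^p$ maximal inequality together with conditional Jensen and monotone convergence. The only cosmetic differences are that you work with $|M_k|$ directly rather than first reducing to $f\ge 0$, and that you phrase the dilation identity explicitly rather than via the time-reversal of the segment $(X_k,\dots,X_{2k})$.
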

The following corollary follows by combining Lemma \ref{lem: L2exp} with Theorem \ref{thm: maxergodic}. 
\begin{corollary}
\label{cor: maxergcor}
Let $(\Omega,P,\pi)$ be a finite reversible irreducible lazy chain. As in Definition \ref{def: Goodset}, define $\rho(A):=\sqrt{\pi(A)(1-\pi(A))}$,  $\sigma_t:=\rho(A)e^{-t/t_{\mathrm{rel}}}$ and
$$G_{t}(A,m):=\left\{ y: |\Pr_y^{k}(A)-\pi(A)| < m\sigma_t \text{
for all }k \ge t \right\}.$$
Then 
\begin{equation}
\label{eq: G_t}
\pi(G_{t}(A,m)) \ge 1-8m^{-2}, \text{ for all }A \subset \Omega,\, t\ge 0 \text{ and }m>0.
\end{equation}
\end{corollary}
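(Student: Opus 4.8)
The plan is to derive this directly from Starr's maximal inequality (Theorem \ref{thm: maxergodic}) with $p=2$, together with the $L^2$-contraction Lemma \ref{lem: L2exp} and Markov's inequality. Fix a proper non-empty $A\subset\Omega$ (the cases $\pi(A)\in\{0,1\}$ are degenerate and can be set aside), $t\ge 0$, and $m>0$, and put $f:=1_A-\pi(A)\in\R^{\Omega}$, so that $\mathbb{E}_{\pi}f=0$ and $\Var_{\pi}f=\pi(A)(1-\pi(A))=\rho(A)^2$. Since $P^k f(y)=\mathbb{E}_y[1_A(X_k)]-\pi(A)=\Pr_y^k(A)-\pi(A)$ for every integer $k\ge 0$ and every $y$, the complement of $G_{t}(A,m)$ is precisely $\{y:\sup_{k\ge t}|P^k f(y)|\ge m\sigma_t\}$, the supremum running over integers $k\ge\lceil t\rceil$; so it suffices to show this set has $\pi$-measure at most $8m^{-2}$.

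The one point that genuinely needs care — and the only real obstacle — is that Starr's inequality controls the maximal function at \emph{even} times, $g^{*}(x)=\sup_{j\ge 0}|P^{2j}g(x)|$, whereas $G_t$ involves all times $k\ge t$. To handle this I would set $h_0:=P^{\lceil t\rceil}f$ and $h_1:=P^{\lceil t\rceil+1}f$. Any integer $k\ge\lceil t\rceil$ can be written $k=\lceil t\rceil+i+2j$ with $i\in\{0,1\}$, $j\ge 0$, and then $P^k f=P^{2j}h_i$; hence $\sup_{k\ge t}|P^k f(y)|\le\max(h_0^{*}(y),h_1^{*}(y))$. A union bound then reduces the claim to showing $\pi\{y:h_i^{*}(y)\ge m\sigma_t\}\le 4m^{-2}$ for $i=0,1$.

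To bound $\|h_i\|_2$: because $\mathbb{E}_{\pi}f=0$ we have $\mathbb{E}_{\pi}P^j f=0$, so $\|P^j f\|_2^2=\Var_{\pi}P^j f\le e^{-2j/t_{\mathrm{rel}}}\Var_{\pi}f=e^{-2j/t_{\mathrm{rel}}}\rho(A)^2$ by Lemma \ref{lem: L2exp}; taking $j=\lceil t\rceil+i\ge t$ gives $\|h_i\|_2^2\le e^{-2t/t_{\mathrm{rel}}}\rho(A)^2=\sigma_t^2$. Applying Theorem \ref{thm: maxergodic} with $p=2$ to $h_i$ yields $\|h_i^{*}\|_2\le\tfrac{2}{2-1}\|h_i\|_2=2\|h_i\|_2$, so $\|h_i^{*}\|_2^2\le 4\sigma_t^2$. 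Markov's inequality applied to the nonnegative function $(h_i^{*})^2$ then gives $\pi\{y:h_i^{*}(y)\ge m\sigma_t\}=\pi\{y:(h_i^{*}(y))^2\ge m^2\sigma_t^2\}\le\|h_i^{*}\|_2^2/(m^2\sigma_t^2)\le 4m^{-2}$.

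Combining the last two paragraphs via the union bound gives $\pi(\Omega\setminus G_{t}(A,m))\le\pi\{h_0^{*}\ge m\sigma_t\}+\pi\{h_1^{*}\ge m\sigma_t\}\le 8m^{-2}$, which is (\ref{eq: G_t}). Apart from the parity split, the argument is a routine chaining of the three cited ingredients; the only bookkeeping to watch is that $\lceil t\rceil+i\ge t$ is what makes $\|h_i\|_2^2\le\sigma_t^2$, and that $\sigma_t>0$ (i.e.\ $0<\pi(A)<1$) so that the division in Markov's inequality is legitimate.
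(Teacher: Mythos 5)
Your proof is correct and follows essentially the same route as the paper: the paper writes $f_t:=P^t(1_A-\pi(A))$ and bounds $f_t^*$ and $(Pf_t)^*$ separately — exactly your $h_0^*,h_1^*$ parity split — then applies the $L^2$-contraction lemma, Starr's inequality with $p=2$, Markov's inequality, and a union bound to get $4m^{-2}+4m^{-2}=8m^{-2}$.
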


\begin{proof}
For any $t \ge0$, let $f_t(x):=P^{t}(1_{A}(x)-\pi(A)) =\Pr_x^t(A)-\pi(A)$.
Then in the notation of Theorem \ref{thm: maxergodic}, $$f_t^{*}(x):=\sup_{k
\ge 0}|P^{2k}f_t(x) |=\sup_{k \ge 0}|\Pr_x^{2k+t}(A)-\pi(A) |,$$ and similarly
$$(Pf_{t})^{*}(x)=\sup_{k
\ge 0}|\Pr_x^{2k+1+t}(A)-\pi(A) |.$$ Hence $G_{t}=\left\{x \in \Omega:f_t^*(x),(Pf_t)^*(x)
<  m\sigma_{t}\right\}$.  Whence
\begin{equation}
\label{eq: ergodic3}
\begin{split}
& 1- \pi(G_{t}) \le \pi \left\{x:f_{t}^* (x) \ge  m\sigma_{t}\right\}+\pi \left\{x:(Pf_{t})^*(x)
\ge  m\sigma_{t}\right\}.
 \end{split}
\end{equation}
Note that since $\pi P^{t}=\pi$ we have that $\mathbb{E}_{\pi}(f_t)=\mathbb{E}_{\pi}(f_0)=\mathbb{E}_{\pi}(1_{A}-\pi(A))=0$.
Now (\ref{eq: L2contraction0}) implies
that
\begin{equation}
\label{eq: ergodic4}
\|Pf_t \|_2^2 \le \|f_t \|_2^{2}=\Var_{\pi}P^tf_0 \le e^{-2t/t_{\mathrm{rel}}}\Var_{\pi}
f_0=e^{-2t/t_{\mathrm{rel}}}\rho^2(A)=\sigma_t^{2}.
\end{equation}
Hence by Markov inequality and (\ref{eq: ergodic1}) we have
\begin{equation}
\label{eq: ergodic6}
\pi \left\{x:f_t^{*}(x)\ge
m\sigma_{t} \right\}=\pi \left\{x:(f_t^{*}(x))^{2}\ge
m^{2}\sigma_{t}^2 \right\}  \le 4m^{-2},
\end{equation}
and similarly, $\pi \left\{x:(Pf_{t})^*(x)
\ge  m\sigma_{t}\right\} \le 4 m^{-2}$.

The corollary now follows by substituting the last two bounds in (\ref{eq:
ergodic3}).
\end{proof}

\subsection{Proof of Theorem \ref{thm: maxergodic}}
As promised, we end this section with the proof of Theorem \ref{thm: maxergodic}.

\emph{Proof of Theorem \ref{thm: maxergodic}.} Let $p \in (1,\infty)$ and $f \in L^p(\R^{\Omega},\pi)$. Let $q:= \frac{p}{p-1}$ be the conjugate exponent of $p$. We argue that it suffices to prove the theorem only for $f \ge 0$, since for general $f$, if we denote $h:=|f|$, then $|f_{*}| \le h_{*}$. Consequently, $\|f_{*}\|_p \le \|h_{*}\|_p \le q \|h\|_p=q\| f \|_p $.

Let $(X_{n})_{n \ge 0 }$ have the distribution of the chain $(\Omega,P,\pi)$ with $X_{0} \sim \pi$. Let $n \ge 0$. Let $0 \le f \in L^p(\Omega,\pi)$. By the tower property of conditional expectation (e.g.~\cite{durrett2010probability},
Theorem 5.1.6.),
\begin{equation}
\label{eq: starr1}
 P^{2n}f(X_0):=  \mathbb{E}[f(X_{2n})\mid X_0]=\mathbb{E}[ \mathbb{E}[f(X_{2n})\mid X_n]
\mid X_0]=
\mathbb{E}[R_{n}\mid X_0],
\end{equation}
where $R_n:= \mathbb{E}[f(X_{2n})\mid X_n]$.
 Since $X_0 \sim \pi$, by reversibility,  $(X_n,X_{n+1},\ldots,X_{2n})$ and $(X_n,X_{n-1},\ldots,X_0)$ have the same law.  Hence 
\begin{equation}
\label{eq: starr2}
R_n=\mathbb{E}[f(X_{2n}) \mid X_n]=\mathbb{E}[f(X_{0}) \mid X_n]=\mathbb{E}[f(X_{0}) \mid X_n,X_{n+1},\ldots],
\end{equation}
where the second equality in (\ref{eq: starr2}) follows by the Markov property. Fix $N \ge 0$.  By (\ref{eq: starr2}) $(R_n)_{n = 0}^{N}$ is a reverse martingale, i.e.~$(R_{N-n})_{n=0}^{N}$ is a martingale. By Doob's $L^p$ maximal inequality (e.g.~\cite{durrett2010probability}, Theorem 5.4.3.)
\begin{equation}
\label{eq: Doob}
\|\max_{0 \le n \le N}R_n \|_p \le q\|R_0\|_p=q\|f(X_0)\|_p.
\end{equation}
Denote $h_N:=\max_{0 \le n \le N}P^{2n}f$. By (\ref{eq: starr1}),
\begin{equation}
\label{eq: Starr5}
h_{N}(X_{0})= \max_{0 \le n \le N}\mathbb{E}[R_{n}\mid X_0]  \le \mathbb{E}\left[ \max_{0 \le n \le N}R_{n}\mid X_0\right].
\end{equation}
By conditional Jensen inequality $\|\mathbb{E}[Y \mid X_0] \|_p \le \|Y\|_p $ (e.g.~\cite{durrett2010probability}, Theorem 5.1.4.). So by taking $L^p$ norms in (\ref{eq: Starr5}), together with (\ref{eq: Doob}) we get that
\begin{equation}
\label{eq: starr3}
\begin{split}
& \|h_N \|_p \le \|\max_{0 \le n \le N}R_n \|_p \le q\|f(X_0)\|_p.
\end{split}
\end{equation}
The proof is concluded using the monotone convergence theorem. \qed
\vspace{2mm}

\section{Inequalities relating $t_{\rm mix}(\epsilon)$ and ${\rm hit_{\alpha}(\delta)}$}
\label{sec: 3}
Our aim in this section is to obtain inequalities relating $t_{\rm mix}(\epsilon)$ and ${\rm hit_{\alpha}(\delta)}$ for suitable values of $\alpha$, $\epsilon$ and $\delta$ using Corollary \ref{cor: maxergcor}.

The following corollary uses the same reasoning as in the proof of (\ref{eq: mainidea})-(\ref{eq: mainidea2}) with a slightly more careful analysis.
\begin{corollary}
\label{cor: hitprob}
Let $(\Omega,P,\pi)$ be a lazy reversible irreducible finite chain.
Let $x \in \Omega$,  $\delta,\alpha \in (0,1)$,  $s\ge 0$
and  $A \subset \Omega$. Denote $t:=\mathrm{hit}_{1-\alpha,x}(\delta)$.
Then
\begin{equation}
\label{eq: hitprob1}
\Pr_{x}^{t+s}[A] \ge (1-\delta)\left[\pi(A)-e^{-s/t_{\mathrm{rel}}}\left[8 \alpha^{-1}\pi(A)(1-\pi(A))
\right]^{1/2} \right].
\end{equation}
Consequently, for any $0< \epsilon
<1$ we have that 
\begin{equation}
\label{eq: hitTV6}
\mathrm{hit}_{1-\alpha}((\alpha+\epsilon) \wedge 1) \le t_{\mathrm{mix}}(\epsilon)
\text{ and } t_{\mathrm{mix}}((\epsilon+\delta)\wedge 1)\le \mathrm{hit}_{1-\alpha}(\epsilon)+\left\lceil  \frac{t_{\mathrm{rel}}}{2}\log^{+}
\left(\frac{2(1-\epsilon)^2}{\alpha \epsilon \delta} \right)\right\rceil,  
\end{equation}
where $a\wedge b:=\min \{a,b\}$ and $\log^{+}x:=\max \{ \log x,0\}$. In particular,
for any $0< \epsilon \le 1/2$,  
\begin{equation}
\label{eq: hitTV1}
\mathrm{hit}_{1-\epsilon/4}(5\epsilon/4  ) \le t_{\mathrm{mix}}(\epsilon ) \le \mathrm{hit}_{1-\epsilon/4}(3\epsilon/4)+\left\lceil \frac{3t_{\mathrm{rel}}}{2} \log
\left(4/\epsilon \right)\right\rceil,
\end{equation}
\begin{equation}
\label{eq: hitTV2}
t_{\mathrm{mix}}(\epsilon
)\le \mathrm{hit}_{1/2}( \epsilon/2)+\left\lceil t_{\mathrm{rel}} \log
\left(4/ \epsilon \right)\right\rceil \text{ and } t_{\mathrm{mix}}(1-\epsilon/2 ) \le   \mathrm{hit}_{1/2}(1-\epsilon)+\left\lceil t_{\mathrm{rel}}  \right\rceil. 
\end{equation}
\end{corollary}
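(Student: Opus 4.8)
The plan is to mimic the proof sketch of (\ref{eq: mainidea})–(\ref{eq: mainidea2}) from the introduction, but carrying the constants carefully so that the multiplicative loss of $(1-\delta)$ appears cleanly. First I would fix $x$, $A$, $\delta$, $\alpha$, $s$ as in the statement and set $m:=\sqrt{8/\alpha}$, so that by Corollary \ref{cor: maxergcor} the good set $G:=G_s(A,m)$ has $\pi(G)\ge 1-8m^{-2}=1-\alpha$, i.e.\ $\pi(G)\ge 1-\alpha$. The precision in the definition of $G$ is $m\sigma_s=\sqrt{8/\alpha}\,e^{-s/t_{\mathrm{rel}}}\sqrt{\pi(A)(1-\pi(A))}=e^{-s/t_{\mathrm{rel}}}\big[8\alpha^{-1}\pi(A)(1-\pi(A))\big]^{1/2}=:\eta$. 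Now with $t:=\mathrm{hit}_{1-\alpha,x}(\delta)$, by definition of $\mathrm{hit}$ and since $\pi(G)\ge 1-\alpha$, we have $\Pr_x[T_G>t]\le\delta$. Conditioning on $\{T_G\le t\}$ and using the strong Markov property together with the defining property of $G$ (every $g\in G$ has $|\Pr_g^{k}(A)-\pi(A)|<\eta$ for all $k\ge s$), we get $\Pr_x[X_{t+s}\in A\mid T_G\le t]\ge \pi(A)-\eta$. Hence
\[
\Pr_x^{t+s}[A]\ge \Pr_x[T_G\le t]\,(\pi(A)-\eta)\ge (1-\delta)(\pi(A)-\eta),
\]
which is exactly (\ref{eq: hitprob1}). (One should note $\pi(A)-\eta$ could be negative, in which case the bound is trivial; the multiplication by $(1-\delta)$ is still valid since we can just drop the conditioning on the complementary event, whose contribution is nonnegative — I would phrase this so the inequality holds unconditionally.)

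For the left-hand inequality in (\ref{eq: hitTV6}), I would argue contrapositively in the usual way: take $A$ with $\pi(A)\ge 1-\alpha$ and $x$ achieving $\Pr_x[T_A>t']>\delta'$ for suitable $t',\delta'$, and observe that not hitting $A$ by time $t'$ forces $\Pr_x^{t'}(A)$ to be small, hence $d_x(t')\ge \pi(A)-\Pr_x^{t'}(A)$ is large; unwinding the definitions yields $\mathrm{hit}_{1-\alpha}((\alpha+\epsilon)\wedge 1)\le t_{\mathrm{mix}}(\epsilon)$. More precisely: if $t<\mathrm{hit}_{1-\alpha}((\alpha+\epsilon)\wedge1)$ then there are $x,A$ with $\pi(A)\ge1-\alpha$ and $\Pr_x[T_A>t]>(\alpha+\epsilon)\wedge1$, and since $\{X_t\in A\}\subseteq\{T_A\le t\}$ we get $\Pr_x^t(A)\le 1-\Pr_x[T_A>t]<1-\epsilon-\alpha\le \pi(A)-\epsilon$, so $d_x(t)>\epsilon$ and thus $t<t_{\mathrm{mix}}(\epsilon)$. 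For the right-hand inequality in (\ref{eq: hitTV6}), apply (\ref{eq: hitprob1}) with $t=\mathrm{hit}_{1-\alpha}(\epsilon)$ (taking the max over $x$), choosing $s:=\big\lceil\frac{t_{\mathrm{rel}}}{2}\log^{+}\!\big(\frac{2(1-\epsilon)^2}{\alpha\epsilon\delta}\big)\big\rceil$; this choice makes $\eta\le \delta\epsilon/(1-\epsilon)\cdot(\text{something})$ — the routine computation is to bound $\pi(A)(1-\pi(A))\le (1-\epsilon)^2$ when $\pi(A)\ge 1-\epsilon$, and $\le \pi(A)$ otherwise, and check that in both regimes (\ref{eq: hitprob1}) gives $\Pr_x^{t+s}(A)\ge \pi(A)-\epsilon-\delta$, i.e.\ $t_{\mathrm{mix}}((\epsilon+\delta)\wedge1)\le t+s$. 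This bound on $\Pr_x^{t+s}(A)$ only controls the "one-sided deficit" $\pi(A)-\Pr_x^{t+s}(A)$; since total variation distance is the maximum over sets of exactly this one-sided deficit, that is enough.

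Finally, (\ref{eq: hitTV1}) and (\ref{eq: hitTV2}) are just specializations: for (\ref{eq: hitTV1}) take $\alpha=\epsilon/4$ and $\delta=\epsilon/4$ in (\ref{eq: hitTV6}), using $\epsilon+\delta=5\epsilon/4$ on the left (wait — on the left one wants $\mathrm{hit}_{1-\epsilon/4}(5\epsilon/4)\le t_{\mathrm{mix}}(\epsilon)$, which is the left inequality of (\ref{eq: hitTV6}) with $\alpha=\epsilon/4$, since $\alpha+\epsilon=5\epsilon/4$), and on the right bounding the log term $\log^+\!\big(\frac{2(1-\epsilon)^2}{(\epsilon/4)\epsilon(\epsilon/4)}\big)=\log^+\!\big(\frac{32(1-\epsilon)^2}{\epsilon^3}\big)\le 3\log(4/\epsilon)$ for $\epsilon\le1/2$ (an elementary check). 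For (\ref{eq: hitTV2}), take $\alpha=1/2$ with $\delta=\epsilon/2$ for the first inequality and $\alpha=1/2$, $\delta=1-\epsilon/2$ — more carefully, one sets the parameters so that $\epsilon+\delta$ matches the desired mixing threshold and simplifies the ceiling term; the $\log(4/\epsilon)$ and the bare $\lceil t_{\mathrm{rel}}\rceil$ come out of bounding $\log^+(\cdots)$ in the two regimes.

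I expect the main obstacle to be purely bookkeeping: choosing the free parameters $\alpha,\delta,s$ and the split of the "$\epsilon$" budget between "escape from a small set" and "$t_{\mathrm{rel}}$-driven mixing" so that all the stated constants ($2$, $3/2$, $4/\epsilon$, etc.) come out exactly, and handling the degenerate cases $\pi(A)-\eta<0$ and $(\alpha+\epsilon)>1$ or $(\epsilon+\delta)>1$ (hence the $\wedge 1$). There is no conceptual difficulty beyond what is already in the proof of (\ref{eq: mainidea}); the one subtlety worth stating explicitly is that (\ref{eq: hitprob1}) controls only the lower tail $\Pr_x^{t+s}(A)$, and one must invoke the identity $\|\mu-\pi\|_{\mathrm{TV}}=\max_{B}(\pi(B)-\mu(B))$ to convert this into a genuine total-variation bound.
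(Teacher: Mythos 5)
Your plan is essentially the paper's own proof: the good set $G_s(A,m)$ with $m=\sqrt{8/\alpha}$ from Corollary \ref{cor: maxergcor}, conditioning on $\{T_G\le t\}$ to get (\ref{eq: hitprob1}), the inclusion $\{X_{t}\in A\}\subseteq\{T_A\le t\}$ for the first half of (\ref{eq: hitTV6}), and an application of (\ref{eq: hitprob1}) with the stated $s$ for the second half (the paper applies it to the maximizing set $A=\{y:\pi(y)>\Pr_x^{t+s}(y)\}$; your remark that it suffices to bound the one-sided deficit for every $A$ amounts to the same thing). Two bookkeeping steps need repair, though. First, the ``routine computation'' closing the second inequality of (\ref{eq: hitTV6}) does not work via the case split you propose: bounding $\pi(A)(1-\pi(A))$ by $(1-\epsilon)^2$ when $\pi(A)\ge 1-\epsilon$ yields a deficit bound of $\epsilon+2\sqrt{\epsilon\delta}\,(1-\epsilon)$, which at $\epsilon=\delta=1/4$ equals $5/8>\epsilon+\delta=1/2$ (the other regime fails similarly). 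The step that does work, and is what the paper uses, is uniform in $\pi(A)$: the choice of $s$ gives $(1-\epsilon)e^{-s/t_{\mathrm{rel}}}\sqrt{8/\alpha}\le 2\sqrt{\epsilon\delta}$, hence $\pi(A)-\Pr_x^{t+s}(A)\le \epsilon\bigl[\pi(A)+2\sqrt{\delta/\epsilon}\sqrt{\pi(A)(1-\pi(A))}\bigr]\le \epsilon+\delta$, by the elementary inequality $u+c\sqrt{u(1-u)}\le 1+c^2/4$ applied with $u=\pi(A)$ and $c=2\sqrt{\delta/\epsilon}$; no case analysis is needed.

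Second, the specializations: for the right-hand inequality of (\ref{eq: hitTV1}) you must apply (\ref{eq: hitTV6}) with $(\alpha,\epsilon,\delta)=(\epsilon/4,\,3\epsilon/4,\,\epsilon/4)$, so that the mixing threshold is $3\epsilon/4+\epsilon/4=\epsilon$ and the hit-level is $3\epsilon/4$; your computation keeps $\epsilon$ in the middle slot and would only give $t_{\mathrm{mix}}(5\epsilon/4)\le \mathrm{hit}_{1-\epsilon/4}(\epsilon)+\cdots$, which is not the stated bound. Likewise the second half of (\ref{eq: hitTV2}) comes from the triple $(1/2,\,1-\epsilon,\,\epsilon/2)$, so the logarithmic term is $\log^{+}\bigl(8\epsilon/(1-\epsilon)\bigr)$, not from ``$\delta=1-\epsilon/2$''. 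With these corrections your argument coincides with the paper's.
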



\begin{proof}
We first prove (\ref{eq: hitprob1}). Fix some $x \in \Omega$. Consider the set $$G=G_{s}(A):=\left\{ y: |\Pr_y^{k}(A)-\pi(A)| < e^{-s/t_{\mathrm{rel}}}\left(8
\alpha^{-1}\pi(A)(1-\pi(A))
\right)^{1/2} \text{
for all }k \ge s \right\}.$$
Then by Corollary \ref{cor: maxergcor} we have that $$\pi(G)\ge 1-\alpha.
$$ 
By the Markov property and
conditioning on $T_{G}$ and on $X_{T_{G}}$ we get that
$$\Pr_{x}^{t+s}[A \mid T_{G} \le t] \ge \pi(A)-e^{-s/t_{\mathrm{rel}}} \left[8
\alpha^{-1}\pi(A)(1-\pi(A))
\right]^{1/2}.$$
Since $\pi(G)  \ge 1-\alpha$ we have that $\Pr_{x}[ T_{G} \le t] \ge
1-\delta$ for $t:=\mathrm{hit}_{1-\alpha,x}(\delta) $. Thus
$$\Pr_{x}^{t+s}[A] \ge \Pr_{x}[T_{G} \le t]\Pr_{x}^{t+s}[A
\mid T_{G} \le t]\ge (1-\delta)\left[ \pi(A)-e^{-s/t_{\mathrm{rel}}} \left[8
\alpha^{-1}\pi(A)(1-\pi(A))
\right]^{1/2}\right],$$
which concludes the proof of (\ref{eq: hitprob1}). We now prove (\ref{eq: hitTV6}). The first inequality in (\ref{eq: hitTV6}) follows directly from the definition
of the total variation distance. To see this, let $A \subset \Omega$ be an arbitrary set with $\pi(A) \ge 1-\alpha$.
Let $t_{1}:=t_{\mathrm{mix}}(\epsilon)$. Then for any $x \in \Omega$, $\Pr_{x}[T_{A} \le t_{1}] \ge \Pr_{x}[
X_{t_1} \in A] \ge \pi(A)-\|\Pr_{x}^{t_{1}} - \pi \|_{\mathrm{TV}}  \ge 1-\alpha-\epsilon$. In particular, we get directly from Definition \ref{def: worstinprobcutoff} that $\mathrm{hit}_{1-\alpha}(\alpha+\epsilon) \le t_1=t_{\mathrm{mix}}(\epsilon)$. We now prove the second inequality in (\ref{eq: hitTV6}).

Set $t:=\mathrm{hit}_{1-\alpha}(\epsilon)$ and $s:=\left\lceil \frac{1}{2}t_{\mathrm{rel}}\log^
{+} \left(\frac{2(1-\epsilon)^2}{  
\alpha \epsilon \delta } \right)\right\rceil$. Let $x \in \Omega$ be such that $d(t+s,x)=d(t+s) $ and set $A:=\{y \in \Omega: \pi(y)> \Pr_{x}^{t+s}(y) \}$. Observe that by the choice of $t,s,x$ and $A$ together with (\ref{eq: hitprob1}) we have that
\begin{equation}
\label{eq: hitTV4}
\begin{split}
& d(t+s)=\pi(A)-\Pr_{x}^{t+s}(A) \le \epsilon \pi(A)+(1-\epsilon)e^{-s/t_{\mathrm{rel}}}\left[8
\alpha^{-1}\pi(A)(1-\pi(A))
\right]^{1/2} \\ & \le \epsilon[\pi(A) +2\sqrt{\delta/\epsilon}\sqrt{\pi(A)(1-\pi(A))} ] \le \epsilon[1+(2\sqrt{\delta/\epsilon})^2/4]=\epsilon+\delta,
\end{split}
\end{equation}
where in the last inequality we have used the easy fact that for any $c >0$ and any $x \in [0,1]$ we have that $x+c\sqrt{x(1-x)}\le 1+c^2/4$. Indeed, since $x \in [0,1]$ it suffices to show that $x+c\sqrt{(1-x)}\le 1+c^2/4
$. Write $\sqrt{1-x}=y$ and $c/2=a$. By subtracting $x$ from both sides,
the previous inequality is equivalent to $2ay \le y^2+a^2$. This concludes the proof of (\ref{eq: hitTV6}).

To get (\ref{eq: hitTV1}), apply (\ref{eq: hitTV6}) with $(\alpha,\epsilon,\delta)$ being $(\epsilon/4,3\epsilon/4,\epsilon/4)$.
Similarly, to get (\ref{eq: hitTV2}) apply (\ref{eq: hitTV6}) with $(\alpha,\epsilon,\delta )$ being $(1/2,\epsilon/2,\epsilon/2)$ or $(1/2,1-\epsilon,\epsilon/2 )$, respectively. 
\end{proof}
\begin{remark}
Corollary \ref{cor: hitprob} holds also in continuous-time
case (where everywhere in (\ref{eq: hitprob1})-(\ref{eq: hitTV2})
$t_{\mathrm{mix}}$ and $\mathrm{hit}$ are replaced by $t_{\mathrm{mix}}^{\mathrm{ct}}$
and $\mathrm{hit}^{\mathrm{ct}}$, respectively,  and all ceiling signs are
omitted). The necessary adaptations are explained in Section \ref{sec: ct}.
\end{remark}
Let $\alpha \in (0,1)$. Observe that for any $A \subset \Omega$ with $\pi(A) \ge \alpha$, any $x \in \Omega$ and any $t,s \ge 0$ we have that $\Pr_{x}[T_{A} > t+s] \le \Pr_{x}[T_{A} > t]\bigl( \max_{z}\Pr_{z}[T_{A} > s]\bigr) \le p(\alpha,t)p(\alpha,s) $. Maximizing over $x$ and $A$ yields that $p(\alpha,t+s) \le p(\alpha,t)p(\alpha,s)$, from which the following proposition follows. 
\begin{proposition}
\label{prop: submultiplicativityofhit}
For any $\alpha,\epsilon,\delta \in (0,1)$ we have that
\begin{equation}
\label{eq: submult}
\mathrm{hit}_{\alpha}(\epsilon \delta) \le \mathrm{hit}_{\alpha}(\epsilon )+\mathrm{hit}_{\alpha}(\delta).
\end{equation}
\end{proposition}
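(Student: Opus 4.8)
The statement to prove is Proposition~\ref{prop: submultiplicativityofhit}, and in fact the paragraph immediately preceding it already contains the core of the argument. My plan is to make that sketch precise.

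\textbf{Step 1: Submultiplicativity of $p(\alpha,\cdot)$.} Fix $\alpha \in (0,1)$, and let $A \subset \Omega$ be any set with $\pi(A) \ge \alpha$. For arbitrary $x \in \Omega$ and $t,s \ge 0$, I would condition on the state $X_t$ and use the Markov property: on the event $\{T_A > t\}$ we have $X_t \notin A$, and
$$\Pr_x[T_A > t+s] = \mathbb{E}_x\left[\math1_{\{T_A>t\}}\Pr_{X_t}[T_A > s]\right] \le \Pr_x[T_A>t]\,\max_{z \in \Omega}\Pr_z[T_A > s].$$
Now $\max_z \Pr_z[T_A > s] \le \max_{B:\,\pi(B)\ge\alpha}\max_z \Pr_z[T_B>s] = p(\alpha,s)$, and similarly $\Pr_x[T_A>t] \le p(\alpha,t)$. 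Hence $\Pr_x[T_A > t+s] \le p(\alpha,t)p(\alpha,s)$. Taking the maximum over $x \in \Omega$ and over all admissible $A$ gives $p(\alpha,t+s) \le p(\alpha,t)p(\alpha,s)$.

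\textbf{Step 2: From submultiplicativity to \eqref{eq: submult}.} Set $t_1 := \mathrm{hit}_\alpha(\epsilon)$ and $t_2 := \mathrm{hit}_\alpha(\delta)$; these are finite since the chain is irreducible on a finite state space. By Definition~\ref{def: worstinprob}, $p(\alpha,t_1)\le\epsilon$ and $p(\alpha,t_2)\le\delta$. Here I should note that $t\mapsto p(\alpha,t)$ is non-increasing (monotonicity of $\{T_A>t\}$ in $t$), so it is legitimate to plug in the minimizing times. Then by Step 1,
$$p(\alpha,t_1+t_2) \le p(\alpha,t_1)\,p(\alpha,t_2) \le \epsilon\delta,$$
and therefore $\mathrm{hit}_\alpha(\epsilon\delta) = \min\{t : p(\alpha,t)\le\epsilon\delta\} \le t_1+t_2 = \mathrm{hit}_\alpha(\epsilon)+\mathrm{hit}_\alpha(\delta)$, which is \eqref{eq: submult}.

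\textbf{Main obstacle.} There is essentially no hard step here — the only subtleties are bookkeeping: verifying that the maximum in the definition of $p(\alpha,\cdot)$ is attained (finite state space, finitely many sets), that $p(\alpha,\cdot)$ is monotone so that inserting $\mathrm{hit}_\alpha(\epsilon)$ into the submultiplicativity bound is valid, and that one should work with the \emph{same} set $A$ on both time intervals before maximizing (the inequality $\Pr_x[T_A>t+s]\le p(\alpha,t)p(\alpha,s)$ holds for each fixed $A$, and only afterward does one maximize the left side over $A$). The continuous-time analogue is identical with $T_A$ replaced by $T_A^{\mathrm{ct}}$, since the Markov property is all that is used.
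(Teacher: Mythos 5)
Your proposal is correct and follows essentially the same route as the paper: the paper proves $p(\alpha,t+s)\le p(\alpha,t)p(\alpha,s)$ by exactly the Markov-property conditioning you give in Step 1, and then deduces the proposition; your Step 2 merely spells out the (routine) monotonicity and finiteness bookkeeping the paper leaves implicit.
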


\vspace{2mm}

In the next corollary, we establish inequalities between ${\rm hit}_{\alpha}(\delta)$ and ${\rm hit}_{\beta}(\delta')$ for appropriate values of $\alpha, \beta, \delta$ and $\delta'$.

\begin{corollary}
\label{prop: hitpqinequalities}
For any reversible irreducible finite chain and $0< \epsilon < \delta  < 1$,
\begin{equation}
\label{eq: thitinequality1}
\mathrm{hit}_{\beta }(\delta) \le \mathrm{hit}_{\alpha }(\delta)
\le \mathrm{hit}_{\beta }(\delta- \epsilon)+ \left\lceil \alpha^{-1} t_{\mathrm{rel}} \log \left( \frac{1-\alpha}{(1-\beta)\epsilon} \right) \right\rceil, \text{ for any } 0 < \alpha \le \beta < 1.
\end{equation}
\end{corollary}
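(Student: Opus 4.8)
The inequality $\mathrm{hit}_{\beta}(\delta)\le\mathrm{hit}_{\alpha}(\delta)$ is immediate: since $\alpha\le\beta$, every $A$ with $\pi(A)\ge\beta$ also satisfies $\pi(A)\ge\alpha$, hence $p(\beta,t)\le p(\alpha,t)$ for all $t$, and the claim follows from Definition \ref{def: worstinprob}. The content is the upper bound on $\mathrm{hit}_{\alpha}(\delta)$. The plan is to set
\[
t:=\mathrm{hit}_{\beta}(\delta-\epsilon),\qquad s:=\left\lceil\alpha^{-1}t_{\mathrm{rel}}\log\frac{1-\alpha}{(1-\beta)\epsilon}\right\rceil ,
\]
and show that $p(\alpha,t+s)\le\delta$, which is exactly the desired bound $\mathrm{hit}_{\alpha}(\delta)\le t+s$. (Note $(1-\beta)\epsilon<1-\beta\le1-\alpha$, so the logarithm is positive and $s\ge1$; also $0<\delta-\epsilon<1$, so $t$ is well defined.)

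Fix $x\in\Omega$ and $A\subseteq\Omega$ with $\pi(A)\ge\alpha$; it suffices to show $\Pr_x[T_A>t+s]\le\delta$. The one genuinely new ingredient is a spectral estimate for the escape probability from stationarity, namely $\Pr_{\pi}[T_A>s]\le(1-\pi(A))e^{-s\pi(A)/t_{\mathrm{rel}}}$. To obtain it, let $B:=\Omega\setminus A$ and let $P_B:=\big(P(y,z)\big)_{y,z\in B}$ be the sub-Markov kernel obtained by killing the chain upon hitting $A$. By reversibility $P_B$ is self-adjoint with respect to $\langle\cdot,\cdot\rangle_{\pi}$ on the functions supported on $B$, and by laziness it is positive semidefinite; writing $\lambda_B$ for its top eigenvalue we get $\Pr_{\pi}[T_A>s]=\langle 1_B,P_B^{\,s}1_B\rangle_{\pi}\le\lambda_B^{\,s}\pi(B)$. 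Combining the variational formula $1-\lambda_B=\min\{\langle f,(I-P)f\rangle_{\pi}/\|f\|_2^2:\mathrm{supp}(f)\subseteq B\}$ with the spectral-gap inequality $\langle f,(I-P)f\rangle_{\pi}\ge t_{\mathrm{rel}}^{-1}\Var_{\pi}f$ and the bound $(\mathbb{E}_{\pi}f)^2\le\pi(B)\|f\|_2^2$ (Cauchy--Schwarz, using $f\equiv0$ off $B$) yields $1-\lambda_B\ge t_{\mathrm{rel}}^{-1}\pi(A)$, so $\lambda_B^{\,s}\le e^{-s\pi(A)/t_{\mathrm{rel}}}$. Since $u\mapsto(1-u)e^{-su/t_{\mathrm{rel}}}$ is decreasing on $[0,1]$ and $\pi(A)\ge\alpha$, we conclude $\Pr_{\pi}[T_A>s]\le(1-\alpha)e^{-s\alpha/t_{\mathrm{rel}}}\le(1-\beta)\epsilon$ by the choice of $s$.

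Now set $B_{\star}:=\{y\in\Omega:\Pr_y[T_A>s]\le\epsilon\}$. By Markov's inequality,
\[
\pi(\Omega\setminus B_{\star})\le\epsilon^{-1}\mathbb{E}_{\pi}\!\left[\Pr_Y[T_A>s]\right]=\epsilon^{-1}\Pr_{\pi}[T_A>s]\le1-\beta ,
\]
so $\pi(B_{\star})\ge\beta$. Conditioning on $T_{B_{\star}}$ and applying the strong Markov property, together with the fact that $r\mapsto\Pr_y[T_A>r]$ is non-increasing and that $t+s-T_{B_{\star}}\ge s$ on $\{T_{B_{\star}}\le t\}$, we obtain
\[
\Pr_x[T_A>t+s]\le\Pr_x[T_{B_{\star}}>t]+\Pr_x[T_{B_{\star}}\le t]\max_{y\in B_{\star}}\Pr_y[T_A>s]\le\Pr_x[T_{B_{\star}}>t]+\epsilon .
\]
Since $\pi(B_{\star})\ge\beta$ and $t=\mathrm{hit}_{\beta}(\delta-\epsilon)$, we have $\Pr_x[T_{B_{\star}}>t]\le p(\beta,t)\le\delta-\epsilon$, whence $\Pr_x[T_A>t+s]\le\delta$. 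As $x$ and $A$ were arbitrary subject to $\pi(A)\ge\alpha$, this gives $p(\alpha,t+s)\le\delta$ and completes the proof.

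I expect the main obstacle to be locating the escape-probability estimate: the key realization is that one should control $\Pr_{\pi}[T_A>s]$, since then a single application of Markov's inequality produces a set $B_{\star}$ of $\pi$-measure at least $\beta$ from which $A$ is hit within $s$ steps with probability at least $1-\epsilon$. Bounding $\Pr_{\pi}[T_A>s]$ is then the classical spectral-gap estimate for the killed chain, and the remainder is a routine strong-Markov decomposition plus unwinding the definitions of $\mathrm{hit}_{\alpha}$ and $\mathrm{hit}_{\beta}$.
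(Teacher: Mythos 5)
Your proposal is correct and follows essentially the same route as the paper: you construct the same blow-up set ($B_{\star}$ is exactly the paper's $H_1$) of $\pi$-measure at least $\beta$ by applying Markov's inequality to the stationary escape probability $\Pr_{\pi}[T_A>s]$, and you conclude with the same strong Markov decomposition at $t=\mathrm{hit}_{\beta}(\delta-\epsilon)$. The only (cosmetic) difference is that you re-derive the estimate $\Pr_{\pi}[T_A>s]\le \pi(A^c)e^{-s\pi(A)/t_{\mathrm{rel}}}$, i.e.\ the paper's (\ref{eq: CM1}) from Lemma \ref{lem: AF1}, inline via a Rayleigh-quotient bound on the top eigenvalue of the killed kernel $P_B$ — which is the same Dirichlet-form comparison used in Lemma \ref{lem: CMlem}(i) — and your appeal to laziness for positive semidefiniteness of $P_B$ is not needed (nor assumed in the statement), since $|\gamma_i|\le\gamma_1$ already yields $\langle 1_B,P_B^{s}1_B\rangle_{\pi}\le\gamma_1^{s}\pi(B)$.
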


The general idea behind Corollary \ref{prop: hitpqinequalities} is as follows. Loosely speaking, we show that any (not too small) set  $A \subset
\Omega$ has a ``blow-up" set $H(A)$ (of large $\pi$-measure), such that starting from any $x \in H(A)$, the set $A$ is hit ``quickly" (in time proportional to $t_{\mathrm{ rel}}$ times a constant depending on the size of $A$) with large probability.

In order to establish the existence of such a blow-up, it turns out that it suffices to consider the hitting time of $A$, starting from the initial distribution $\pi$, which is well-understood.\begin{lemma}
\label{lem: AF1}
Let $(\Omega,P,\pi)$ be a finite irreducible reversible Markov chain. Let
$A \subsetneq\ \Omega$ be non-empty. Let $\alpha>0$ and $w \ge 0$. Let $B(A,w,\alpha):=\left\{y:\Pr_{y}
\left[T_{A} > \left\lceil \frac{t_{\mathrm{rel}}w }{\pi(A)}\right\rceil
\right] \ge \alpha  \right\}$. Then
\begin{equation}
\label{eq: CM1}
\Pr_{\pi}[T_{A} > t]  \le \pi(A^c)   
\left(
1-\frac{\pi(A)}{t_{\mathrm{rel}}} \right)^{t}  \le \pi(A^c)  \exp \left(- \frac{t
\pi(A)}{t_{\mathrm{rel}}} \right), \text{ for any }t \ge 0.
\end{equation}
In particular, 
\begin{equation}
\label{eq: CM2}
\pi \left(B(A,w,\alpha) \right) \le \pi(A^c)   e^{-w}\alpha^{-1} \text{ and }\pi(A)\mathbb{E}_{\pi}[T_{A}]
 \le t_{\mathrm{rel}}\pi(A^c).
\end{equation}
\end{lemma}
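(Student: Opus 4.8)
The plan is to reduce the tail estimate (\ref{eq: CM1}) to a spectral bound for the chain killed upon hitting $A$. Let $P_{A^{c}}:=\bigl(P(x,y)\bigr)_{x,y\in A^{c}}$ be the corresponding sub-stochastic matrix; by reversibility it is self-adjoint with respect to the inner product $\langle f,g\rangle:=\sum_{x\in A^{c}}\pi(x)f(x)g(x)$ on $\mathbb R^{A^{c}}$. Since the chain is lazy, all eigenvalues of $P$ lie in $[0,1]$ (as $2P-I$ is again a reversible transition matrix), so $P$ is positive semidefinite; hence so is its restriction $P_{A^{c}}$, whose eigenvalues therefore lie in $[0,\lambda_{A^{c}}]$, where $\lambda_{A^{c}}$ denotes its top eigenvalue. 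Finally, since $\Pr_{x}[T_{A}>t]=(P_{A^{c}}^{t}\mathbf 1)(x)$ for $x\in A^{c}$ and vanishes for $x\in A$, we have $\Pr_{\pi}[T_{A}>t]=\sum_{x\in A^{c}}\pi(x)\,(P_{A^{c}}^{t}\mathbf 1)(x)=\langle\mathbf 1,P_{A^{c}}^{t}\mathbf 1\rangle$, where $\mathbf 1$ is the all-ones vector on $A^{c}$.

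The crux is the inequality $\lambda_{A^{c}}\le 1-\pi(A)/t_{\mathrm{rel}}$. To prove it, let $g$ be an eigenfunction of $P_{A^{c}}$ for $\lambda_{A^{c}}$, extended by $0$ to all of $\Omega$, so that $g$ vanishes on $A$ and $Pg=\lambda_{A^{c}}g$ on $A^{c}$. Using these two facts one computes, for the Dirichlet form of $g$, that $\langle(I-P)g,g\rangle_{\pi}=\|g\|_{2}^{2}-\langle Pg,g\rangle_{\pi}=(1-\lambda_{A^{c}})\|g\|_{2}^{2}$. On the other hand, the spectral decomposition of $P$ recorded in Section~\ref{s:trelimplications} gives the Poincar\'e inequality $\langle(I-P)g,g\rangle_{\pi}\ge(1-\lambda_{2})\Var_{\pi}g=t_{\mathrm{rel}}^{-1}\Var_{\pi}g$, while Cauchy--Schwarz applied with the weights $\pi(x)$, $x\in A^{c}$, gives $(\mathbb E_{\pi}g)^{2}\le\pi(A^{c})\,\mathbb E_{\pi}[g^{2}]$ and hence $\Var_{\pi}g\ge\pi(A)\|g\|_{2}^{2}$. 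Combining the three displays and dividing by $\|g\|_{2}^{2}>0$ yields the claim.

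Granting this, (\ref{eq: CM1}) is immediate: expanding $\mathbf 1$ in an orthonormal eigenbasis of $P_{A^{c}}$ (whose eigenvalues lie in $[0,\lambda_{A^{c}}]$) gives $\Pr_{\pi}[T_{A}>t]=\langle\mathbf 1,P_{A^{c}}^{t}\mathbf 1\rangle\le\lambda_{A^{c}}^{t}\langle\mathbf 1,\mathbf 1\rangle=\lambda_{A^{c}}^{t}\pi(A^{c})\le\pi(A^{c})\bigl(1-\tfrac{\pi(A)}{t_{\mathrm{rel}}}\bigr)^{t}\le\pi(A^{c})\,e^{-t\pi(A)/t_{\mathrm{rel}}}$, using $1-u\le e^{-u}$ in the last step. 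For the first assertion of (\ref{eq: CM2}), put $\ell:=\bigl\lceil t_{\mathrm{rel}}w/\pi(A)\bigr\rceil$, so that $\ell\pi(A)/t_{\mathrm{rel}}\ge w$; Markov's inequality applied to the nonnegative function $y\mapsto\Pr_{y}[T_{A}>\ell]$ under $\pi$ then gives $\pi\bigl(B(A,w,\alpha)\bigr)\le\alpha^{-1}\mathbb E_{\pi}\bigl[\Pr_{\cdot}[T_{A}>\ell]\bigr]=\alpha^{-1}\Pr_{\pi}[T_{A}>\ell]\le\alpha^{-1}\pi(A^{c})\,e^{-w}$. For the second assertion, sum the tail bound: $\mathbb E_{\pi}[T_{A}]=\sum_{t\ge0}\Pr_{\pi}[T_{A}>t]\le\pi(A^{c})\sum_{t\ge0}\bigl(1-\tfrac{\pi(A)}{t_{\mathrm{rel}}}\bigr)^{t}=\pi(A^{c})\,t_{\mathrm{rel}}/\pi(A)$, i.e.\ $\pi(A)\mathbb E_{\pi}[T_{A}]\le t_{\mathrm{rel}}\pi(A^{c})$.

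The one substantive ingredient is the spectral bound $\lambda_{A^{c}}\le 1-\pi(A)/t_{\mathrm{rel}}$, which ties the killed chain's top eigenvalue to the global spectral gap via the Dirichlet form together with Cauchy--Schwarz; the remaining steps—constructing $P_{A^{c}}$, invoking laziness to keep it positive so that $P_{A^{c}}^{t}\preceq\lambda_{A^{c}}^{t}I$, and the two short deductions—are routine. The only other point to watch is that all of the above concerns integer times, which suffices here since $T_{A}$ is $\mathbb N$-valued.
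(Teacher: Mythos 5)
Your argument is essentially the paper's: both proofs analyze the substochastic restriction $P_{A^{c}}$, bound its top eigenvalue by $1-\pi(A)/t_{\mathrm{rel}}$ via a Dirichlet-form/Poincar\'e comparison (your second paragraph is exactly the content of the comparison the paper outsources to Lemma 2.7 of Ding--Lubetzky--Peres and Aldous--Fill), and then deduce (\ref{eq: CM1}) spectrally and (\ref{eq: CM2}) by Markov's inequality and by summing the tail; in that sense your write-up is even a bit more self-contained. Two points of comparison. First, the paper needs an extra step you avoid: since it controls the eigenvalues of $P_{A^{c}}$ via Perron--Frobenius, it must first decompose $A^{c}$ into the connected components $C_{1},\dots,C_{k}$ on which the restriction is irreducible and apply the spectral bound to each $C_{i}$ (using $\pi(C_{i}^{c})\ge\pi(A)$); your positive-semidefiniteness argument needs no irreducibility, so the decomposition is unnecessary.

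Second, and this is the one substantive discrepancy: the only place you use laziness is to get $P\succeq 0$, hence $P_{A^{c}}\succeq 0$, so that $\langle\mathbf 1,P_{A^{c}}^{t}\mathbf 1\rangle\le\lambda_{A^{c}}^{t}\|\mathbf 1\|^{2}$. The lemma, however, is stated for a general finite irreducible reversible chain, and the paper deliberately proves it \emph{without} laziness (it says so explicitly), because the non-lazy version is genuinely used later: Corollary \ref{prop: hitpqinequalities} is stated for arbitrary reversible chains, the continuous-time adaptation in Section \ref{sec: ct} applies the same spectral facts to the non-lazy transition matrix, and the remark after Proposition \ref{prop: TVbound0} removes laziness altogether. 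So as written your proof covers only the lazy case. The repair is short: without laziness, $P_{A^{c}}$ is a nonnegative matrix that is self-adjoint in $\langle\cdot,\cdot\rangle$, so by Perron--Frobenius its spectral radius equals its largest eigenvalue $\gamma_{1}$ (equivalently, work component by component as the paper does); since the coefficients $\langle\mathbf 1,g_{i}\rangle^{2}$ in the eigenexpansion are nonnegative, $\langle\mathbf 1,P_{A^{c}}^{t}\mathbf 1\rangle\le\gamma_{1}^{t}\pi(A^{c})$ still holds, and your Dirichlet-form bound on $\gamma_{1}$ (which never used laziness) finishes the proof in full generality.
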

The proof of Lemma
\ref{lem: AF1} is deferred to the end of this section.

\emph{Proof of Corollary \ref{prop: hitpqinequalities}.} Denote $s=s_{\alpha,\beta,\epsilon}:= \left\lceil \alpha^{-1}
t_{\mathrm{rel}} \log \left( \frac{1-\alpha}{(1-\beta)\epsilon} \right) \right\rceil $. Let $A \subset \Omega$ be an arbitrary set such that $\pi(A) \ge \alpha $. Consider the
set $$H_1=H_1(A,\alpha ,\beta, \epsilon ):=\left\{y \in \Omega:\Pr_{y}[T_{A} \le s ] \ge 1-\epsilon
  \right\}.$$ Then by (\ref{eq:
CM2}) \begin{equation*}
\begin{split}
\pi(H_1) & \ge 1-(1-(1-\epsilon) )^{-1} (1-\pi(A))\exp \left[-\frac{s\pi(A) }{ t_{\mathrm{rel}}} \right]
 \\ & \ge 1-\epsilon^{-1} (1-\alpha)
 \exp \left[- \log \left(\frac{1-\alpha}{(1-\beta)\epsilon}\right)
\right]=\beta .
\end{split}
\end{equation*}
 By the definition of $H_1$ together with  the Markov property and the fact
that $\pi(H_{1}) \ge \beta  $, for any $t \ge 0$ and $x \in \Omega$,
\begin{equation}
\label{eq: concentrationallsizes1}
\begin{split}
\Pr_{x}[T_{A}
& \le t+s] \ge  \Pr_{x}[T_{H_1} \le t, T_{A}
\le t+s]  \ge (1-\epsilon) \Pr_{x}[T_{H_1} \le t] \\ & \ge
(1-\epsilon)(1- p_{x}(\beta ,t)) \ge 1-\epsilon - \max_{y \in \Omega}p_{y}(\beta ,t) .
\end{split}
\end{equation}
Taking $t:=\mathrm{hit}_{\beta }(\delta -\epsilon )$ and minimizing the LHS of (\ref{eq: concentrationallsizes1}) over $A$ and $x$ gives the second inequality in (\ref{eq: thitinequality1}). The first inequality in (\ref{eq: thitinequality1})  is trivial because $\alpha \le
\beta $. \qed
\vspace{2mm}

\subsection{Proofs of Proposition \ref{prop: TVbound0} and Theorem \ref{thm: psigmacutoffequiv}}
Now we are ready to prove our main abstract results.

\begin{proof}[Proof of Proposition \ref{prop: TVbound0}] First note that (\ref{eq:
TVbound0}) follows from (\ref{eq: hitTV1}) and the first inequality in (\ref{eq:
t_relintro}). Moreover, in light of (\ref{eq: hitTV2}) we only need to prove
the first inequalities in (\ref{eq: introTVbound1}) and (\ref{eq: introTVbound2}). Fix some $0<\epsilon\le 1/4$. Take any set $A$ with $\pi(A)\geq \frac{1}{2}$ and $x\in \Omega$. Denote $s_{\epsilon}:=\lceil 2t_{\rm
rel}|\log \epsilon | \rceil$ It follows by coupling the chain with initial distribution $\Pr_x^t$ with the stationary chain that for all $t\geq 0$  
$$\Pr_x[T_A>t+s_{\epsilon}]\leq d_{x}(t)+\Pr_{\pi}[T_A>s_{\epsilon}]\leq d_{x}(t)+\frac{1}{2}e^{-s_{\epsilon}/2t_{\rm rel}}\leq d(t)+\frac{\epsilon}{2}$$
where the penultimate inequality above is a consequence of (\ref{eq: CM1}). Putting $t=t_{\rm mix}(\epsilon)$ and $t=t_{\rm mix}(1-\epsilon)$ successively in the above equation and maximizing over $x\in \Omega$ and $A$ such that $\pi(A)\geq \frac{1}{2}$ gives
$${\rm hit}_{1/2}(3\epsilon/2)\leq t_{\rm mix}(\epsilon)+ s_{\epsilon}~\text{and}~{\rm hit}_{1/2}(1-\epsilon/2)\leq t_{\rm mix}(1-\epsilon)+s_{\epsilon},$$
which completes the proof.     
\end{proof}

Before completing the proof of Theorem \ref{thm: psigmacutoffequiv}, we prove that under the product condition if a sequence of reversible chains exhibits ${\rm hit}_{\alpha}$-cutoff for some $\alpha\in (0,1)$, then it exhibits ${\rm hit}_{\alpha}$-cutoff for all $\alpha\in (0,1)$.



\begin{proposition}
\label{prop: equivoftalphascutoff}
Let $(\Omega_n,P_n,\pi_n)$ be a sequence of lazy finite irreducible reversible
chains. Assume that the product condition holds. Then (1) and (2) below are
equivalent:
\begin{itemize}
\item[(1)] There exists $\alpha \in (0,1)$ for which the
sequence exhibits a $\mathrm{hit}_{\alpha}$-cutoff.
\item[(2)] The sequence
exhibits a $\mathrm{hit}_{\alpha}$-cutoff for any $\alpha
\in (0,1)$.
\end{itemize}
Moreover,
\begin{equation}
\label{eq: hitThetamix}
\mathrm{hit}_{\alpha}^{(n)}(1/4)
= \Theta (t_{\mathrm{mix}}^{(n)}), \text{ for any } \alpha \in (0,1).
\end{equation}
Furthermore, if (2) holds then 
\begin{equation}
\label{eq: ratiohit}
\lim_{n \to \infty}
\mathrm{hit}_{\alpha}^{(n)}(1/4)/\mathrm{hit}_{1/2}^{(n)}(1/4)
= 1, \text{ for any } \alpha \in (0,1).
\end{equation}
\end{proposition}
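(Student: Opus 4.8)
The plan is to prove Proposition \ref{prop: equivoftalphascutoff} in three stages, leveraging Corollary \ref{prop: hitpqinequalities} together with the product condition, which makes all terms of the form $c(\alpha,\beta,\epsilon)t_{\mathrm{rel}}^{(n)}$ negligible compared to $t_{\mathrm{mix}}^{(n)}$. First I would prove \eqref{eq: hitThetamix}. The upper bound $\mathrm{hit}_{\alpha}^{(n)}(1/4)=O(t_{\mathrm{mix}}^{(n)})$ follows from the second inequality in \eqref{eq: hitTV6} (with a fixed choice such as $\epsilon=\delta=1/8$), since the additive $t_{\mathrm{rel}}$-term is $o(t_{\mathrm{mix}}^{(n)})$ under the product condition; for $\alpha>1/2$ one first passes to $\mathrm{hit}_{1/2}$ via the first inequality in \eqref{eq: thitinequality1}. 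The lower bound $\mathrm{hit}_{\alpha}^{(n)}(1/4)=\Omega(t_{\mathrm{mix}}^{(n)})$ follows from the first inequality in \eqref{eq: hitTV6} (again using that the $t_{\mathrm{rel}}$-corrections, if any, are negligible), or directly from the first inequality in \eqref{eq: introTVbound1} when $\alpha=1/2$ and from \eqref{eq: thitinequality1} to transfer to general $\alpha$.

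Next I would establish (1)$\Rightarrow$(2). Suppose the sequence has $\mathrm{hit}_{\alpha_0}$-cutoff for some $\alpha_0$, and fix an arbitrary target level $\beta$. It suffices to show $\mathrm{hit}_{\beta}^{(n)}(\epsilon)-\mathrm{hit}_{\beta}^{(n)}(1-\epsilon)=o(t_{\mathrm{mix}}^{(n)})$ for every $\epsilon\in(0,1/4)$, since by \eqref{eq: hitThetamix} we have $\mathrm{hit}_{\beta}^{(n)}(1/4)=\Theta(t_{\mathrm{mix}}^{(n)})$. Using \eqref{eq: thitinequality1} in both directions, one sandwiches $\mathrm{hit}_{\beta}^{(n)}(\delta)$ between $\mathrm{hit}_{\gamma}^{(n)}(\delta)$ and $\mathrm{hit}_{\gamma}^{(n)}(\delta-\epsilon')+O(t_{\mathrm{rel}}^{(n)})$ for a comparison level $\gamma$ (taking $\gamma=\min\{\alpha_0,\beta\}$ or an intermediate level as needed to satisfy the $\alpha\le\beta$ ordering in \eqref{eq: thitinequality1}, possibly applying it twice), where the additive term is $o(t_{\mathrm{mix}}^{(n)})$. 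Since slightly perturbing the probability parameter $\delta$ costs only a negligible additive term — this is exactly where the submultiplicativity Proposition \ref{prop: submultiplicativityofhit} and \eqref{eq: hitThetamix} combine to absorb $o(1)$-changes in $\delta$ into $o(t_{\mathrm{mix}}^{(n)})$ — the $\mathrm{hit}_{\alpha_0}$-cutoff of the comparison level transfers to $\mathrm{hit}_{\beta}$-cutoff. The direction (2)$\Rightarrow$(1) is trivial.

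Finally, for \eqref{eq: ratiohit}, assume (2) holds. The strategy is to show that for any $\alpha$ and any $\epsilon$, both $\mathrm{hit}_{\alpha}^{(n)}(1/4)$ and $\mathrm{hit}_{1/2}^{(n)}(1/4)$ are squeezed, up to additive $o(t_{\mathrm{mix}}^{(n)})$ error, between the same pair of quantities $\mathrm{hit}_{\gamma}^{(n)}(1-\epsilon)$ and $\mathrm{hit}_{\gamma}^{(n)}(\epsilon)$ for a common level $\gamma$ (say $\gamma=\min\{\alpha,1/2\}$), using \eqref{eq: thitinequality1} as above; under $\mathrm{hit}_{\gamma}$-cutoff these two bracketing quantities differ by $o(t_{\mathrm{mix}}^{(n)})=o(\mathrm{hit}_{\gamma}^{(n)}(1/4))$, and all three of $\mathrm{hit}_{\alpha}^{(n)}(1/4),\mathrm{hit}_{1/2}^{(n)}(1/4),\mathrm{hit}_{\gamma}^{(n)}(1/4)$ are $\Theta(t_{\mathrm{mix}}^{(n)})$ and mutually equal up to $o(t_{\mathrm{mix}}^{(n)})$; hence their ratio tends to $1$. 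The main obstacle is bookkeeping: one must choose the comparison levels and the $\epsilon$-perturbations so that the ordering hypothesis $\alpha\le\beta$ of \eqref{eq: thitinequality1} is respected in each application (which may force a two-step comparison through an intermediate level when neither of $\alpha,1/2$ dominates the other), and one must verify carefully that every correction term is genuinely $o(t_{\mathrm{mix}}^{(n)})$ — this is immediate from the product condition for the $t_{\mathrm{rel}}$-terms, but for the perturbations in the probability parameter it requires \eqref{eq: hitThetamix} to convert a multiplicative statement (submultiplicativity) into an additive $o(t_{\mathrm{mix}}^{(n)})$ statement.
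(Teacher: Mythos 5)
Your outline follows the paper's route: prove (\ref{eq: hitThetamix}) from Corollary \ref{cor: hitprob} plus submultiplicativity and the product condition, then transfer cutoff between levels via Corollary \ref{prop: hitpqinequalities} (whose $t_{\mathrm{rel}}$-term is $o(t_{\mathrm{mix}}^{(n)})$), and get (\ref{eq: ratiohit}) from the same sandwich. But two steps are misstated. The smaller one: in your proof of (\ref{eq: hitThetamix}) the roles of the two inequalities in (\ref{eq: hitTV6}) are swapped. The first inequality, $\mathrm{hit}_{1-\alpha}((\alpha+\epsilon)\wedge 1)\le t_{\mathrm{mix}}(\epsilon)$, is what gives the upper bound $\mathrm{hit}_{\alpha}^{(n)}(1/4)=O(t_{\mathrm{mix}}^{(n)})$ (after using (\ref{eq: submult}) to bring the probability parameter down to $1/4$ and submultiplicativity of $d(\cdot)$ to pass from $t_{\mathrm{mix}}(\alpha/4)$ to $t_{\mathrm{mix}}$), while the second inequality, $t_{\mathrm{mix}}((\epsilon+\delta)\wedge 1)\le \mathrm{hit}_{1-\alpha}(\epsilon)+\lceil\cdot\rceil$ with, say, $\epsilon=\delta=1/8$, is what gives the lower bound; likewise the first inequality of (\ref{eq: introTVbound1}) bounds $\mathrm{hit}_{1/2}$ from above, not below. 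This is a citation swap rather than a conceptual obstacle, since the toolkit you name does suffice.

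The substantive gap is the assertion that ``slightly perturbing the probability parameter $\delta$ costs only a negligible additive term,'' justified by Proposition \ref{prop: submultiplicativityofhit} together with (\ref{eq: hitThetamix}). Submultiplicativity gives $\mathrm{hit}_{\alpha}(\epsilon\delta)\le\mathrm{hit}_{\alpha}(\epsilon)+\mathrm{hit}_{\alpha}(\delta)$, and by (\ref{eq: hitThetamix}) the extra term is $\Theta(t_{\mathrm{mix}}^{(n)})$, i.e.\ an $O(t_{\mathrm{mix}}^{(n)})$ cost, not $o(t_{\mathrm{mix}}^{(n)})$; absent cutoff, $\mathrm{hit}_{\alpha}^{(n)}(\delta)$ can genuinely move by $\Theta(t_{\mathrm{mix}}^{(n)})$ as $\delta$ varies (the Aldous-type example of Section \ref{sec: examples} satisfies the product condition and has non-concentrated worst hitting times), so no such perturbation lemma can follow from those two ingredients. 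The correct mechanism, which is how the paper's display (\ref{eq: hitcutoffequivdifsize7}) is organized, is to arrange the two applications of Corollary \ref{prop: hitpqinequalities} so that the shifted probability parameters land at the comparison level $\alpha_0$ where cutoff is \emph{assumed}: since $\mathrm{hit}_{\alpha_0}$-cutoff holds for every $\epsilon\in(0,1/4)$, quantities like $\mathrm{hit}_{\alpha_0}^{(n)}(\epsilon-\epsilon')-\mathrm{hit}_{\alpha_0}^{(n)}(1-(\epsilon-\epsilon'))$ are already $o(t_{\mathrm{mix}}^{(n)})$, and no perturbation at the target level $\beta$ (where it would be circular) is ever needed. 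Relatedly, your choice $\gamma=\min\{\alpha_0,\beta\}$ breaks down when $\beta<\alpha_0$, since then $\gamma=\beta$ and the sandwich compares $\mathrm{hit}_\beta$ with itself; the comparison level must be $\alpha_0$ in both cases, with Corollary \ref{prop: hitpqinequalities} applied with the roles $(\alpha,\beta)$ there equal to $(\beta,\alpha_0)$ when $\beta<\alpha_0$ and to $(\alpha_0,\beta)$ otherwise. With these repairs your argument coincides with the paper's proof.
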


\begin{proof} 
We start by proving (\ref{eq: hitThetamix}). Assume that the product condition holds. Fix some $\alpha \in (0,1)$. Note that we have
$$\mathrm{hit}_{\alpha}^{(n)}(1/4) \le 4\alpha^{-1} \mathrm{hit}_{\alpha}^{(n)}\left (1-\frac{3\alpha}{4}\right) \le 4\alpha^{-1} t_{\mathrm{mix}}^{(n)}\left(\frac{\alpha}{4}\right) \le 4\alpha^{-1}  (2+\lceil \log_{2} (1/\alpha)\rceil ) t_{\mathrm{mix}}^{(n)}.$$
The first inequality above follows from (\ref{eq: submult}) and the fact that $(1-3\alpha/4)^{4\alpha^{-1}-1} \le 4 e^{-3} \le 1/4$. The second one follows from (\ref{eq:
hitTV6})(first inequality). The final inequality above is a consequence of the sub-multiplicativity property: for any $k,t \ge 0$,  $d(kt) \le (2d(t))^k$ (e.g.~\cite{levin2009markov}, (4.24) and Lemma 4.12).

Conversely, by (\ref{eq: submult}) (second inequality) and the second inequality in (\ref{eq:
hitTV6}) with $(\alpha,\epsilon,\delta)$ here being $(1-\alpha,1/8,1/8) $ (first inequality)
$$ \frac{t_{\mathrm{mix}}^{(n)}}{2} - \left\lceil \frac{t_{\mathrm{rel}}^{(n)}}{4}\log \left(\frac{100}{1-\alpha}\right) \right\rceil \le \frac{\mathrm{hit}_{\alpha}^{(n)}(1/8)}{2} \le \mathrm{hit}_{\alpha}^{(n)}(1/4).$$
This concludes the proof of (\ref{eq: hitThetamix}).
We now prove the equivalence between (1) and (2) under the product condition. It suffices
to show that (1) $\Longrightarrow$ (2), as the reversed implication is trivial. Fix $0<\alpha < \beta < 1$. It suffices to show that $\mathrm{hit}_{\alpha}$-cutoff occurs iff $\mathrm{hit}_{\beta}$-cutoff occurs.

Fix $\epsilon \in (0,1/8)$. Denote $s_{n}=s_{n}(\alpha,\beta,\epsilon):=
\left\lceil t_{\mathrm{rel}}^{(n)} \alpha^{-1}  \log \left(\frac{1-\alpha}{(1-\beta)\epsilon} \right) \right\rceil$. By the second inequality in Corollary
\ref{prop: hitpqinequalities}    
\begin{equation}
\label{eq: concentrationallsizes5'}
\begin{split}
\mathrm{hit}_{\alpha}^{(n)}(1-\epsilon)
 \le \mathrm{hit}_{\beta}^{(n)}(1-2\epsilon)+ s_{n} \text{ and } \mathrm{hit}_{\alpha}^{(n)}(2\epsilon)  \le
\mathrm{hit}_{\beta}^{(n)}(\epsilon)+
s_{n}.
\end{split}
\end{equation}
By the first
inequality in Corollary
\ref{prop: hitpqinequalities}
\begin{equation}
\label{eq: concentrationallsizes5}
\mathrm{hit}_{\beta}^{(n)}(2\epsilon) \le\mathrm{hit}_{\alpha}^{(n)}(2\epsilon)\le \mathrm{hit}_{\alpha}^{(n)}(\epsilon)
\text{ and } \mathrm{hit}_{\beta}^{(n)}(1-\epsilon) \le \mathrm{hit}_{\beta}^{(n)}(1-2\epsilon)
\le \mathrm{hit}_{\alpha}^{(n)}(1-2\epsilon).
\end{equation}
Hence
\begin{equation}
\label{eq: hitcutoffequivdifsize7}
\begin{split}
\mathrm{hit}_{\beta}^{(n)}(2\epsilon)-\mathrm{hit}_{\beta}^{(n)}(1-2\epsilon)
& \le \mathrm{hit}_{\alpha}^{(n)}(\epsilon)-\mathrm{hit}_{\alpha}^{(n)}(1-\epsilon)+
s_{n},
\\ \mathrm{hit}_{\alpha}^{(n)}(2\epsilon)-\mathrm{hit}_{\alpha}^{(n)}(1-2\epsilon)
& \le \mathrm{hit}_{\beta}^{(n)}(\epsilon)-\mathrm{hit}_{\beta}^{(n)}(1-\epsilon)+s_{n}.
\end{split}
\end{equation}
Note that by the assumption that the product condition holds, we have that $s_n=o(t_{\mathrm{mix}}^{(n)})$. Assume that the sequence exhibits $\mathrm{hit}_{\alpha}$-cutoff. Then by (\ref{eq: hitThetamix}) the RHS of the first line of (\ref{eq: hitcutoffequivdifsize7}) is $o(t_{\mathrm{mix}}^{(n)})$. Again by (\ref{eq: hitThetamix}), this implies that the RHS of the first line of (\ref{eq: hitcutoffequivdifsize7})
is $o(\mathrm{hit}_{\beta}^{(n)}(1/4))$ and so the sequence exhibits $\mathrm{hit}_{\beta}$-cutoff. Applying the same reasoning, using the second line of (\ref{eq: hitcutoffequivdifsize7}), shows that if the sequence exhibits $\mathrm{hit}_{\beta}$-cutoff, then it also exhibits $\mathrm{hit}_{\alpha}$-cutoff.

We now prove (\ref{eq: ratiohit}). Let $a \in (0,1)$. Denote $\alpha:=\min \{a,1/2 \}$ and $\beta:=\max \{a,1/2 \}$. Let $s_n=s_{n}(\alpha,\beta,\epsilon)$ be as before. 
By the second inequality in Corollary
\ref{prop: hitpqinequalities} 
\begin{equation}
\label{eq: concentrationallsizes6'}
   \mathrm{hit}_{\alpha}^{(n)}(1/4+\epsilon)-s_{n} \le \mathrm{hit}_{\beta}^{(n)}(1/4 
) \le \mathrm{hit}_{\alpha}^{(n)}(1/4 
).
\end{equation}
By assumption (2) together with the product condition and (\ref{eq: hitThetamix}), the LHS of (\ref{eq: concentrationallsizes6'}) is at least $(1-o(1)) \mathrm{hit}_{\alpha}^{(n)}(1/4) $, which by (\ref{eq: concentrationallsizes6'}), implies (\ref{eq: ratiohit}).
\end{proof}

The following proposition shows that the product condition is implied by $\mathrm{hit}_{\alpha}$-cutoff for any $\alpha \leq 1/2$. In particular, this implies the equivalence of $2)$ and $3)$ in Theorem \ref{thm: psigmacutoffequiv}.

\begin{proposition}
\label{prop: prodcondandhitcutoff}
Let $(\Omega_n,P_n,\pi_n)$ be a sequence of lazy finite irreducible reversible
chains. Assume  that  the product condition fails. Then for any $\alpha \le 1/2 $ the sequence does not exhibit $\mathrm{hit}_{\alpha}$-cutoff.
\end{proposition}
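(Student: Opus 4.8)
\textbf{Proof plan for Proposition \ref{prop: prodcondandhitcutoff}.}
Suppose the product condition fails, so that along a subsequence (which we pass to without further comment) we have $t_{\mathrm{rel}}^{(n)} \ge c\, t_{\mathrm{mix}}^{(n)}$ for some constant $c>0$. The plan is to show that $\mathrm{hit}_{\alpha}^{(n)}(\epsilon)$ and $\mathrm{hit}_{\alpha}^{(n)}(1-\epsilon)$ differ by at least a constant multiple of $t_{\mathrm{rel}}^{(n)}$, hence of $\mathrm{hit}_{\alpha}^{(n)}(1/4)$, which rules out $\mathrm{hit}_{\alpha}$-cutoff. The lower bound on $\mathrm{hit}_{\alpha}^{(n)}(\epsilon)$ — i.e.\ a bound saying the chain is slow to hit some large set when run for few steps — should come from the spectral side: using the eigenfunction $f_2$ associated with $\lambda_2$, one builds a set $A$ with $\pi(A)\ge\alpha$ (roughly, a superlevel or sublevel set of $f_2$) from which, starting at the state $x$ maximizing $|f_2|$, the walk needs order $t_{\mathrm{rel}}^{(n)}$ steps before $\Pr_x[T_A \le t]$ becomes bounded away from $1$. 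The upper bound side, namely that $\mathrm{hit}_{\alpha}^{(n)}(1/4)=O(t_{\mathrm{mix}}^{(n)})=O(t_{\mathrm{rel}}^{(n)})$, is already available: by the first inequality in (\ref{eq: hitTV6}) combined with the sub-multiplicativity (\ref{eq: submult}) and the sub-multiplicativity of $d(t)$ (exactly as in the proof of (\ref{eq: hitThetamix})), one gets $\mathrm{hit}_{\alpha}^{(n)}(1/4) \le C_\alpha\, t_{\mathrm{mix}}^{(n)}$ with no use of the product condition.

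More concretely, for the key lower bound I would argue as follows. Fix $\alpha\le 1/2$. Let $f=f_2$ be the $L^2(\pi)$-normalized eigenfunction with $Pf=\lambda_2 f$, normalized so $\|f\|_2=1$ and $\E_\pi f=0$. Since $P$ is lazy and reversible, $\lambda_2 = 1-1/t_{\mathrm{rel}}^{(n)}$, so $P^t f = \lambda_2^t f$ and thus $|\E_x[f(X_t)]| = \lambda_2^t |f(x)|$ for every $x$ and $t$. Pick $x$ with $|f(x)| = \|f\|_\infty =: M$; note $M\ge 1$ because $\|f\|_2=1$, and WLOG $f(x)=M>0$. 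Take $A := \{y : f(y) \le \theta\}$ where $\theta$ is chosen as the largest value with $\pi(A)\ge\alpha$; since $\alpha\le 1/2$ and $\E_\pi f=0$, one can take $\theta \le 0$, and hence $f(x)-\theta \ge M$. On the event $\{T_A \le t\}$, optional stopping at the bounded stopping time $T_A\wedge t$ for the martingale $\lambda_2^{-s}f(X_s)$ gives $\E_x[\lambda_2^{-(T_A\wedge t)} f(X_{T_A\wedge t})] = f(x) = M$; splitting on whether $T_A\le t$ and using $f(X_{T_A})\le\theta\le 0$ on the first event and $\lambda_2^{-s}\le \lambda_2^{-t}$, $|f|\le M$ throughout, yields
\[
M \le \lambda_2^{-t} M\,\Pr_x[T_A>t] + \theta\,\Pr_x[T_A\le t] \le \lambda_2^{-t} M\,\Pr_x[T_A>t].
\]
Hence $\Pr_x[T_A > t] \ge \lambda_2^{t} = (1-1/t_{\mathrm{rel}}^{(n)})^t$. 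Therefore $p(\alpha,t)\ge (1-1/t_{\mathrm{rel}}^{(n)})^t$, so for $t = \beta\, t_{\mathrm{rel}}^{(n)}$ this is at least $e^{-\beta} - o(1)$; choosing $\beta$ a suitable constant so that $e^{-\beta}>1-\epsilon$ (e.g.\ $\beta<\log(1/(1-\epsilon))$) forces $\mathrm{hit}_{\alpha}^{(n)}(1-\epsilon) \ge \beta\, t_{\mathrm{rel}}^{(n)}$ for all large $n$, while from the $\epsilon$ side a similar (weaker) choice of constant gives a lower bound for $\mathrm{hit}_\alpha^{(n)}(\epsilon)$ as well. Combining with $\mathrm{hit}_{\alpha}^{(n)}(1/4)\le C_\alpha t_{\mathrm{mix}}^{(n)} \le (C_\alpha/c)\,t_{\mathrm{rel}}^{(n)}$ and the definition of $\mathrm{hit}_\alpha$-cutoff, we conclude $\mathrm{hit}_\alpha^{(n)}(\epsilon)-\mathrm{hit}_\alpha^{(n)}(1-\epsilon)$ cannot be $o(\mathrm{hit}_\alpha^{(n)}(1/4))$ unless it is negative, which it is not since $\epsilon<1-\epsilon$; so no $\mathrm{hit}_\alpha$-cutoff occurs. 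Actually one must be slightly more careful: cutoff requires the \emph{difference} to be $o(\mathrm{hit}_\alpha^{(n)}(1/4))$, so one should exhibit two thresholds $\epsilon_1<\epsilon_2$ at which the hitting-time levels are separated by $\Omega(t_{\mathrm{rel}}^{(n)})$; the martingale bound above, applied with $t$ ranging over a constant-length interval of multiples of $t_{\mathrm{rel}}^{(n)}$, does exactly this once one also notes the matching upper bound $\Pr_x[T_A>t]\le$ (something decaying in $t/t_{\mathrm{rel}}^{(n)}$) via $\Pr_x[T_A>t]\le \Pr_x[|f(X_t)| \text{ large}]+\dots$ — but since for non-cutoff we only need \emph{one} level to be $\gg t_{\mathrm{rel}}^{(n)}$ apart from another and we already know \emph{all} levels are $O(t_{\mathrm{rel}}^{(n)})$, it is cleanest to note $\mathrm{hit}_\alpha^{(n)}(1-\epsilon)=\Theta(t_{\mathrm{rel}}^{(n)})=\Theta(\mathrm{hit}_\alpha^{(n)}(1/4))$ and $\mathrm{hit}_\alpha^{(n)}(\epsilon)\le\mathrm{hit}_\alpha^{(n)}(1/4)$; cutoff would force these to be asymptotically equal, contradicting that $\mathrm{hit}_\alpha^{(n)}(1-\epsilon)\le\mathrm{hit}_\alpha^{(n)}(\epsilon)$ strictly cannot hold with a uniform constant gap unless the two are of the same order but the ratio does not tend to $1$ — so I would instead directly produce the gap by using \emph{two} eigenfunction-based sets or by the two-sided estimate on $\Pr_x[T_A>t]$.

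\textbf{Main obstacle.} The delicate point is producing a genuine $\Omega(t_{\mathrm{rel}}^{(n)})$ \emph{separation} between two hitting-time levels, rather than merely showing all levels are $\Theta(t_{\mathrm{rel}}^{(n)})$ — since "all of comparable order" is not by itself incompatible with cutoff. I expect the right move is to also get an \emph{upper} bound on $p_x(\alpha,t)$ of the form $\Pr_x[T_A>t] \le$ const$\cdot t_{\mathrm{rel}}^{(n)}/t$ or an exponential-in-$t/t_{\mathrm{rel}}^{(n)}$ bound — obtainable from (\ref{eq: CM1}) of Lemma \ref{lem: AF1} applied after a burn-in, combined with reversibility — so that the profile $t\mapsto p(\alpha,t)$ genuinely interpolates over a window of width $\Theta(t_{\mathrm{rel}}^{(n)}) = \Theta(\mathrm{hit}_\alpha^{(n)}(1/4))$, which is exactly the failure of $\mathrm{hit}_\alpha$-cutoff. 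Handling the case $\alpha\le 1/2$ uniformly (so that the superlevel set of $f_2$ really does have $\pi$-measure $\ge\alpha$ and lies on the "$f\le 0$" side) and making sure the bad state $x$ is one of the states the $\max_x$ in the definition of $p$ sees are the remaining bookkeeping issues; neither is deep given the spectral representation $P^tf_2 = \lambda_2^t f_2$ and the laziness identity $\lambda_2 = 1 - 1/t_{\mathrm{rel}}^{(n)}$.
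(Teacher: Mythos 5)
Your core mechanism coincides with the paper's: the upper bound $\hit_{\alpha}^{(n)}(1/4)=O(\mix^{(n)})$ via (\ref{eq: submult}) and the first inequality in (\ref{eq: hitTV6}), and a lower bound on $\Pr_x[T_A>t]$ obtained by optional stopping of the martingale $\lambda_2^{-k}f_2(X_k)$ on a nonpositive (sub)level set of $f_2$. However, as written there are genuine gaps. First, your construction of $A$ and $x$ asks for $f(x)=\|f\|_\infty>0$ \emph{and} a threshold $\theta\le 0$ with $\pi(\{f\le\theta\})\ge\alpha$, and the claim that ``$\alpha\le 1/2$ and $\E_\pi f=0$'' yield $\theta\le 0$ does not follow once the sign of $f$ has been fixed by the $\|f\|_\infty$ requirement: if, say, $f$ equals $2$ on mass $0.1$, $1$ on mass $0.5$ and $-1.75$ on mass $0.4$, then $\E_\pi f=0$, the maximizer of $|f|$ has $f>0$, yet $\pi(\{f\le 0\})=0.4<1/2$, so for $\alpha=1/2$ every admissible $\theta$ is positive and the contribution of $\{T_A\le t\}$ to the optional-stopping identity is no longer $\le 0$. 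The correct bookkeeping is the paper's: flip the sign of $f_2$ so that $\pi(\{f_2\le 0\})\ge 1/2\ge\alpha$, set $A:=\{f_2\le 0\}$, and take $x$ maximizing $f_2$ itself (not $|f_2|$); then $f_2(x)>0$ because $\E_\pi f_2=0$, and your computation gives $\Pr_x[T_A>t]\ge\lambda_2^t$ verbatim.

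Second, and more importantly, you do not finish the proof: you flag the production of an $\Omega(\rel^{(n)})$ separation between two levels as the ``main obstacle'' and propose to get it from a matching \emph{upper} bound on $\Pr_x[T_A>t]$ (a genuine window). No such two-sided estimate is needed, and this is where the paper's argument closes while yours stops. Since $p(\alpha,t)\ge\lambda_2^t$ holds for \emph{every} $t$, one gets $\hit_{\alpha}^{(n)}(\epsilon)\ge |\log\epsilon|/|\log\lambda_2^{(n)}|\ge(\rel^{(n)}-1)|\log\epsilon|$, which grows linearly in $|\log\epsilon|$, whereas $\hit_{\alpha}^{(n)}(1-\epsilon)\le \hit_{\alpha}^{(n)}(1/4)\le C_\alpha \mix^{(n)}\le C_\alpha C\,\rel^{(n)}$ uniformly in $\epsilon$ along the subsequence. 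Choosing one fixed $\epsilon\in(0,1/4)$ small enough that $(\rel^{(n)}-1)|\log\epsilon|\ge 2C_\alpha C\,\rel^{(n)}$ gives $\hit_{\alpha}^{(n)}(\epsilon)-\hit_{\alpha}^{(n)}(1-\epsilon)\ge\tfrac12\hit_{\alpha}^{(n)}(\epsilon)\ge\tfrac12\hit_{\alpha}^{(n)}(1/4)$, which is not $o(\hit_{\alpha}^{(n)}(1/4))$; this ratio argument (hitting level at $\epsilon$ blows up in $|\log\epsilon|$ while the level at $1-\epsilon$ is $O(\mix^{(n)})$ independently of $\epsilon$) is exactly how the paper concludes. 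Finally, your asymptotics ($e^{-\beta}-o(1)$, ``all levels $\Theta(\rel^{(n)})$'') implicitly assume $\rel^{(n)}\to\infty$; the regime where $\mix^{(n)}$ (hence, along the subsequence, $\rel^{(n)}$) stays bounded, and in particular the possibility $\lambda_2^{(n)}=0$ where the eigenfunction bound is vacuous, must be handled separately — the paper does this with the laziness estimate $\hit_{\alpha}^{(n)}(\epsilon/2)\ge|\log_2\epsilon|$ of (\ref{eq: nohitcutoff3}), which your proposal omits.
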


Before providing the proof of Proposition \ref{prop: prodcondandhitcutoff}, we complete the proof of Theorem \ref{thm: psigmacutoffequiv}.  

\begin{proof}[Proof of Theorem \ref{thm: psigmacutoffequiv}] By Fact \ref{fact:
cutoffandtrel} and Proposition \ref{prop: prodcondandhitcutoff} it suffices
to consider the case in which the product condition holds. By Propositions
\ref{prop: equivoftalphascutoff}   it suffices to consider the case $\alpha=1/2$
(that is, it suffices to show that under the product condition the sequence
exhibits cutoff iff it exhibits $\mathrm{hit}_{1/2}$-cutoff). This follows
at once from (\ref{eq: introTVbound1}), (\ref{eq: introTVbound2}) and (\ref{eq:
hitThetamix}). 
\end{proof}

\begin{proof}[Proof of Proposition \ref{prop: prodcondandhitcutoff}]
Fix some $0< \alpha \le 1/2 $. We first argue that for all $n$, $k\ge 1$

\begin{equation}
\label{eq: nohitcutoff2}
\hit_{\alpha}^{(n)}([1-\alpha/2]^{k} ) \le k  \lceil| \log_2 (\alpha /2)|
\rceil   \mix^{(n)}.
\end{equation}
By the submultiplicativity property (\ref{eq: submult}), it suffices to verify
(\ref{eq: nohitcutoff2}) only for $k=1$. As in the proof of Proposition \ref{prop:
equivoftalphascutoff}, by the submultiplicativity property $d(mt) \le (2d(t))^{m}
$, together with (\ref{eq: hitTV6}), we have that $\hit_{\alpha}^{(n)}(1-\alpha/2
) \le \mix^{(n)}(\alpha/2) \le \lceil| \log_2 (\alpha /2)|
\rceil) \mix^{(n)}  $.

Conversely, by the laziness assumption, we have that for all $n$,
\begin{equation}
\label{eq: nohitcutoff3}
\hit_{\alpha}^{(n)}(\epsilon/2 ) \ge | \log_2  \epsilon |, \text{ for all
}0<\epsilon<1.
\end{equation}
To see this, consider the case that $X_0^{(n)}=y_n^{(n)}$, for some $y_n
\in \Omega_n$
such that $\pi_n(y_n) \le 1/2 \le 1-\alpha $, and that the first $\lfloor|
\log_2  \epsilon
| \rfloor $ steps of the chain are lazy (i.e.~$y_n=X_1^{(n)}=\cdots = X_{\lfloor|
\log_2  \epsilon
| \rfloor}$).

By (\ref{eq: nohitcutoff2}) in conjunction with (\ref{eq: nohitcutoff3})
we may assume that $\lim_{n \to \infty } \mix^{(n)}=\infty $, as otherwise
there cannot be  $\hit_{\alpha}$-cutoff. By passing to a subsequence, we
may assume further
that there exists some $C>0$ such that $\mix^{(n)} < C \rel^{(n)}$. In particular
$\lim_{n \to \infty}\rel^{(n)}= \infty$ and we may assume without loss of
generality that $(\lambda_2^{(n)})^{\mix^{(n)}} \ge e^{-C} $ for all $n$,
where $\lambda_2^{(n)} $ is the second largest eigenvalue of $P_n$. 

For notational convenience we now suppress the dependence on $n$ from our
notation. 
Let $f_2 \in \R^{\Omega}$ be a non-zero vector satisfying that $P f_2=\lambda_2f_2$.
By considering $-f_2$ if necessary, we may assume that $A:=\{x \in \Omega:f_2
\le 0 \} $ satisfies $\pi(A) \ge 1/2$. Let $x \in \Omega $ be such that $
f_{2}(x)=\max_{y \in \Omega}f_{2}(y) =:L$. Note that $L>0$ since $\mathbb{E}_{\pi}[f_2]=0$.

Consider $N_k:= \lambda_2^{-k } f_{2}(X_{k   })$ and $M_k:=N_{k \wedge T_{A}
} $, where $X_0=x $.   Observe that $(N_k)_{k \ge 0}$ is a martingale and
hence so is $(M_k)_{k \ge 0}$ (w.r.t.~the natural filtration induced by the
chain).
As $M_k \le 0 $ on $\{ T_{A} \le k\}$ and $M_k \le \lambda_2^{-k
} L $ on $\{ T_{A}>k \}$, we get that for all $k >0$
\begin{equation}
\label{eq: SM1}
L=\mathbb{E}_{x}[M_0] = \mathbb{E}_{x}[M_k] \le \mathbb{E}_{x}[\lambda_2^{-k
} L1_{T_{A}>k}] \le \lambda_2^{-k
} L \Pr_{x}[T_{A}>k] .   
\end{equation}
Thus $\Pr_{x}[T_{A}>k] \ge \lambda_2^{k
}$, for all $k$.
Consequently,  for all $a >0 $,
\begin{equation}
\label{eq: nohitcutoff1}
\Pr_{x}[T_{A} > a \mix ] \ge \lambda_2^{ a \mix } \ge e^{-aC} .
\end{equation}
Thus $$\hit_{\alpha}(\epsilon/2) \ge \hit_{1/2}(\epsilon/2) \ge
C^{-1} \mix | \log  \epsilon |, \text{ for any }0< \epsilon < 1 .$$
This, in conjunction with (\ref{eq: nohitcutoff2}), implies that 
 $ \frac{\hit_{\alpha} (\epsilon) }{\hit_{\alpha}(1-\epsilon)}  \ge \frac{|
\log  \epsilon |}{C \lceil \log_2(\alpha/2)
\rceil } $, for all
$0<\epsilon \le \alpha/2$. Consequently, there is no $\hit_{\alpha}$-cutoff.
\end{proof}

\subsection{Proof of Lemma \ref{lem: AF1}}
Now we prove Lemma \ref{lem: AF1}. As mentioned before, the hitting time of a set $A$ starting from stationary initial distribution is well-understood (see \cite{fill2012hitting}; for the continuous-time analog see
\cite{aldous2000reversible},
Chapter 3 Sections 5 and 6.5 or \cite{brown1999interlacing}). Assuming that the chain is lazy, it follows from the theory of complete monotonicity together with some linear-algebra that this distribution is dominated by a distribution which gives mass $\pi(A)$ to $0$, and conditionally on being positive, is distributed as the Geometric distribution with parameter $\frac{\pi(A)}{t_{\mathrm{ rel}}}$.
 Since the existing literature lacks simple treatment of this fact (especially for the discrete-time case) we now prove it for the sake of completeness. We shall prove this fact without assuming laziness. Although without assuming laziness the distribution of $T_A$ under $\Pr_{\pi} $ need not be completely monotone, the proof is essentially identical as in the lazy case.

For any non-empty $ A \subset \Omega $, we write $\pi_{A}$
for the distribution
of $\pi$ conditioned on $A$. That is,  $\pi_{A}(\cdot):=\frac{\pi(\cdot) 1_{\cdot
\in A}}{\pi(A)}$.

\begin{lemma}
\label{lem: CMlem}
Let $(\Omega,P,\pi)$ be a reversible irreducible finite chain. Let $A \subsetneq \Omega$ be non-empty. Denote
its complement by $B$ and write $k=|B|$. Consider the sub-stochastic matrix
$P_{B}$, which is the restriction of $P$ to $B$. That is $P_{B}(x,y):=P(x,y)$
for $x,y \in B$. Assume that $P_{B}$ is irreducible, that is, for any $x,y
\in B$, exists some $t \ge 0$ such that $P_{B}^{t}(x,y)>0$. Then
\begin{itemize}
\item[(i)] $P_{B}$ has $k$ real eigenvalues $1-\frac{\pi(A)}{t_{\mathrm{
rel}}}\ge \gamma_1 > \gamma_2 \ge \cdots
\ge \gamma_{k} \ge -\gamma_1$. 
\item[(ii)] There exist some non-negative $a_1,\ldots,a_k$ such that for any $t \ge 0$ we have that
\begin{equation}
\label{eq: discreteCM1}
\Pr_{\pi_{B}}[T_{A} > t] = \sum_{i=1}^{k}a_i \gamma_i^t.
\end{equation}
\item[(iii)]
\begin{equation}
\label{eq: CM-1}
\Pr_{\pi_{B}}[T_{A} > t] \le \left(1- \frac{\pi(A)}{t_{\mathrm{rel}}}
\right)^{t} \le \exp \left(- \frac{t \pi(A)}{t_{\mathrm{rel}}} \right), \text{ for any }t \ge 0. 
\end{equation}
\end{itemize}
\end{lemma}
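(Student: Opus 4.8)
\emph{Proof plan.} The key structural fact is that $P_{B}$ is self-adjoint with respect to the inner product $\langle f,g\rangle_{B}:=\sum_{x\in B}\pi(x)f(x)g(x)$ on $\R^{B}$: by reversibility of $P$ we have $\pi(x)P_{B}(x,y)=\pi(x)P(x,y)=\pi(y)P(y,x)=\pi(y)P_{B}(y,x)$ for all $x,y\in B$. Since $\pi$ is positive, $\langle\cdot,\cdot\rangle_{B}$ is a genuine inner product, so the spectral theorem gives $k$ real eigenvalues $\gamma_{1}\ge\gamma_{2}\ge\cdots\ge\gamma_{k}$ together with a $\langle\cdot,\cdot\rangle_{B}$-orthonormal eigenbasis $\phi_{1},\dots,\phi_{k}$ with $P_{B}\phi_{i}=\gamma_{i}\phi_{i}$. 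As $P_{B}$ is a non-negative matrix, Perron--Frobenius theory says its spectral radius is an eigenvalue; being the largest eigenvalue, $\gamma_{1}$ equals this spectral radius, so $|\gamma_{i}|\le\gamma_{1}$ for all $i$, and in particular $\gamma_{k}\ge-\gamma_{1}$. Irreducibility of $P_{B}$ makes the Perron eigenvalue simple, so $\gamma_{1}>\gamma_{2}$.

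It remains to bound $\gamma_{1}$ above by $1-\pi(A)/t_{\mathrm{rel}}$, which is the heart of both (i) and (iii). Identifying $g\in\R^{B}$ with its extension by zero to $\Omega$, one checks directly that $\langle P_{B}g,g\rangle_{B}=\langle Pg,g\rangle_{\pi}$ and $\langle g,g\rangle_{B}=\|g\|_{2}^{2}$, so the Rayleigh quotient characterization of the top eigenvalue of the self-adjoint operator $P_{B}$ gives $\gamma_{1}=\sup\{\langle Pg,g\rangle_{\pi}/\|g\|_{2}^{2}:\,g\in\R^{\Omega}\setminus\{0\},\ \mathrm{supp}(g)\subseteq B\}$, equivalently $1-\gamma_{1}=\inf\{\mathcal{E}(g,g)/\|g\|_{2}^{2}\}$ over the same class, where $\mathcal{E}(g,g):=\langle(I-P)g,g\rangle_{\pi}$. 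Now two standard facts combine: the Poincar\'e inequality $\mathcal{E}(g,g)\ge t_{\mathrm{rel}}^{-1}\Var_{\pi}g$ (a consequence of the spectral decomposition, using that $\mathcal{E}$ is unchanged when a constant is added to $g$), and, for $g$ supported on $B$, the Cauchy--Schwarz bound $(\mathbb{E}_{\pi}g)^{2}=\langle g,1_{B}\rangle_{\pi}^{2}\le\|g\|_{2}^{2}\,\pi(B)$, which gives $\Var_{\pi}g=\|g\|_{2}^{2}-(\mathbb{E}_{\pi}g)^{2}\ge\pi(A)\|g\|_{2}^{2}$. Hence $\mathcal{E}(g,g)\ge t_{\mathrm{rel}}^{-1}\pi(A)\|g\|_{2}^{2}$ for every such $g$, so $\gamma_{1}\le 1-\pi(A)/t_{\mathrm{rel}}$, completing part (i).

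For (ii), observe that under $\Pr_{\pi_{B}}$ the event $\{T_{A}>t\}$ is exactly $\{X_{1},\dots,X_{t}\in B\}$ (since $X_{0}\in B$ already), so $\Pr_{x}[T_{A}>t]=(P_{B}^{t}1_{B})(x)$ for $x\in B$ and therefore $\Pr_{\pi_{B}}[T_{A}>t]=\pi(B)^{-1}\langle 1_{B},P_{B}^{t}1_{B}\rangle_{B}$. Expanding $1_{B}=\sum_{i}c_{i}\phi_{i}$ in the eigenbasis and using orthonormality and $P_{B}\phi_{i}=\gamma_{i}\phi_{i}$ yields $\langle 1_{B},P_{B}^{t}1_{B}\rangle_{B}=\sum_{i}c_{i}^{2}\gamma_{i}^{t}$, so (\ref{eq: discreteCM1}) holds with $a_{i}:=c_{i}^{2}/\pi(B)\ge 0$; taking $t=0$ shows $\sum_{i}a_{i}=\Pr_{\pi_{B}}[T_{A}>0]=1$. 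Part (iii) then follows: for integer $t\ge 0$, $\Pr_{\pi_{B}}[T_{A}>t]=\sum_{i}a_{i}\gamma_{i}^{t}\le\sum_{i}a_{i}|\gamma_{i}|^{t}\le\gamma_{1}^{t}\sum_{i}a_{i}=\gamma_{1}^{t}\le(1-\pi(A)/t_{\mathrm{rel}})^{t}\le\exp(-t\pi(A)/t_{\mathrm{rel}})$, using $|\gamma_{i}|\le\gamma_{1}$, $\sum_{i}a_{i}=1$, the bound from the second step, and $1-u\le e^{-u}$.

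I expect the main obstacle to be the second step, the estimate $\gamma_{1}\le 1-\pi(A)/t_{\mathrm{rel}}$: one has to recognize that the restricted (Dirichlet) eigenvalue problem for $P_{B}$ feeds into the global Poincar\'e inequality via the Rayleigh quotient over functions supported on $B$, and that the extra gain of the factor $\pi(A)$ comes precisely from the Cauchy--Schwarz control of $(\mathbb{E}_{\pi}g)^{2}$ by $\pi(B)\|g\|_{2}^{2}$ on such functions. The remaining ingredients — detailed balance for $P_{B}$, Perron--Frobenius for the two-sided eigenvalue bound, and the spectral expansion of the survival probability — are routine.
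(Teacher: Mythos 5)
Your proof is correct and takes essentially the same route as the paper: self-adjointness of $P_{B}$ with respect to the $\pi$-weighted inner product on $\R^{B}$, Perron--Frobenius for $\gamma_1>\gamma_2$ and $|\gamma_i|\le\gamma_1$, and the spectral expansion of $\Pr_{\pi_{B}}[T_{A}>t]$ producing nonnegative coefficients summing to $1$, from which (iii) follows. The only difference is that where the paper cites the Courant--Fischer/Dirichlet-form comparison (Ding et al., Aldous--Fill) for the key bound $\gamma_1\le 1-\pi(A)/t_{\mathrm{rel}}$, you prove it inline via the Rayleigh quotient over functions supported on $B$, the Poincar\'e inequality $\mathcal{E}(g,g)\ge t_{\mathrm{rel}}^{-1}\Var_{\pi}g$, and the Cauchy--Schwarz step $\Var_{\pi}g\ge\pi(A)\|g\|_2^2$ --- which is precisely the cited argument written out.
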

\begin{proof}
We first note that (\ref{eq: CM-1}) follows immediately from (\ref{eq: discreteCM1}) and (i). Indeed, plugging $t=0$ in (\ref{eq: discreteCM1}) yields that $\sum_i a_i=1$. Since by (i), $|\gamma_i| \le \gamma_1 \le 1-\frac{\pi(A)}{t_{\mathrm{rel}}}  $ for all $i$, (\ref{eq: discreteCM1}) implies that $\Pr_{\pi_{B}}[T_{A} \ge t] \le \gamma_1^t\le \left(1- \frac{\pi(A)}{t_{\mathrm{rel}}}
\right)^{t}  $ for all $t \ge 0$.

We now prove (i). Consider the following inner-product on $\R^B$ defined
by $\langle f,g \rangle_{\pi_{B}}:=\sum_{x \in B}\pi_{B}(x)f(x)g(x)$. Since $P$ is reversible, $P_{B}$ is self-adjoint w.r.t.~this inner-product.
Hence indeed $P_{B}$ has $k$ real eigenvalues $ \gamma_1> \gamma_2 \ge \cdots
\ge \gamma_{k} $ and there is a basis of $\R^B$, $g_1,\ldots,g_k$ of orthonormal
vectors w.r.t.~the aforementioned inner-product, such that $P_B g_i =\gamma_i
g_i$ ($i \in [k]$). By the Perron-Frobenius Theorem $\gamma_1> 0$ and $\gamma_1
\ge - \gamma_k$. 

The claim that  $1-\gamma_1 \ge \frac{\pi(A)}{t_{\mathrm{ rel}}}
$, follows by the Courant-Fischer
characterization of the spectral gap and comparing the Dirichlet forms of
$\langle \cdot , \cdot \rangle_{\pi_{B}}$, and $\langle \cdot , \cdot \rangle_{\pi}$
(c.f.~Lemma 2.7
in \cite{ding2010total} or Theorem 3.3 and Corollary 3.4 in Section 6.5 of
 Chapter 3 in \cite{aldous2000reversible}). This concludes the proof of part
(i). We now prove part (ii).

  By summing over all  paths of length $t$ which are contained in $B$ we get that
\begin{equation}
\label{eq: CM7}
\Pr_{\pi_B}[T_{A} >t]=\sum_{x,y
\in B} \pi_B(x) P_B^t(x,y).
\end{equation}
By the spectral representation (c.f.~Lemma
12.2 in \cite{levin2009markov} and Section 4 of Chapter 3 in \cite{aldous2000reversible})
for any $x,y \in B$ and $t \in \N$ we have that $P_{B}^{t}(x,y)=\sum_{i=1}^{k}\pi_{B}(y)g_{i}(x)g_{i}(y)
\gamma_{i}^t $. So by (\ref{eq: CM7})
\begin{equation*}
\begin{split}
\Pr_{\pi_B}[T_{A} >t]=\sum_{x,y
\in B} \pi_B(x) \sum_{i=1}^{k} \pi_{B}(y)g_{i}(x)g_{i}(y)
\gamma_{i}^t= \sum_{i=1}^{k} \left(\sum_{x
\in B} \pi_B(x)g_{i}(x) \right)^{2}
\gamma_{i}^t. 
\end{split}
\end{equation*}
\end{proof}
\emph{Proof of Lemma \ref{lem: AF1}.}
We first note that (\ref{eq: CM2}) follows easily from
(\ref{eq: CM1}). For the first inequality in (\ref{eq: CM2}) denote $B:=B(A,w,\alpha)=\left\{y:\Pr_{y}
\left[T_{A} > \left\lceil \frac{t_{\mathrm{rel}}w }{\pi(A)}\right\rceil
\right] \ge \alpha  \right\}$ and $t=t(A,w):= \left\lceil \frac{t_{\mathrm{rel}}w
}{\pi(A)}\right\rceil $. Then by (\ref{eq: CM1}) $$\alpha \pi (B) \le \pi(B)\Pr_{\pi_B}[T_{A}
>t] \le \Pr_{\pi}[T_{A} >t] \le \pi(A^c)  \exp \left(- \frac{t \pi(A)}{t_{\mathrm{rel}}}
\right) \le \pi(A^c)e^{-w}.$$
We now prove (\ref{eq: CM1}). Denote the connected components of $A^c:=\Omega \setminus A
$ by $\{C_1,\ldots,C_k \}$. Denote the complement of $C_i$ by $C_i^c$.  
By (\ref{eq: CM-1}) we have that
\begin{equation*}
\label{eq: CM4}
\begin{split}
&\Pr_{\pi}[T_{A} > t] = \sum_{i=1}^{k}\pi(C_i)\Pr_{\pi_{C_i}}^{}[T_{A}
> t] = \sum_{i=1}^{k}\pi(C_i)\Pr_{\pi_{C_i}}^{}[T_{C_{i}^c}
> t] \le   \\ & \sum_{i=1}^{k}\pi(C_i)\exp
\left(-\frac{t \pi(C_{i}^c)}{t_{\mathrm{rel}}} \right) \le   \sum_{i=1}^{k}\pi(C_i)\exp
\left(-\frac{t \pi(A)}{t_{\mathrm{rel}}} \right)= \pi(A^c) \exp
\left(-\frac{t \pi(A)}{t_{\mathrm{rel}}} \right). \qed
\end{split}
\end{equation*}
\vspace{2mm}
\section{Continuous-time}
\label{sec: ct}
In this section we explain the necessary adaptations in the proof of Proposition \ref{prop: TVbound0} for the continuous-time case. We fix some finite, irreducible, reversible chain $(\Omega,P,\pi)$. For notational convenience, exclusively for this section, we shall denote the transition-matrix of $(X_{k}^\mathrm{NL})_{k
\ge 0}$, the non-lazy version of the discrete-time chain, by $P$, and that of the lazy version of the chain by $P_L:=(P+I)/2$.

We denote the eigenvalues of $P$ by $1=\lambda_1^{\mathrm{ct}} >\lambda_2^{\mathrm{ct}} \ge \cdots \le \lambda_{|\Omega|}^{\mathrm{ct}} \ge -1$ and that of $P_L$ by $1=\lambda_1^{L}
>\lambda_2^{L} \ge \cdots \le \lambda_{|\Omega|}^{L}
\ge -1 $ (where $1+\lambda_i^{\mathrm{ct}}=2\lambda_{i}^{L}$). We denote $t_{\mathrm{rel}}^{\mathrm{ct}}:=(1-\lambda_2^{\mathrm{ct}})^{-1} $ and $t_{\mathrm{rel}}^{L}:=(1-\lambda_2^{L})^{-1} $. We identify $H_t$ with the operator $H_t:L^2(\R^{\Omega},\pi)
\to L^2(\R^{\Omega},\pi)$, defined by $H_t f(x)= \mathbb{E}_{x}[f(X_t^{\mathrm{ct}})]
$. The spectral decomposition in continuous time takes the following form. If $f_1,\ldots,f_{|\Omega|}$ is an orthonormal basis such that $Pf_i:=\lambda_i^{\mathrm{ct}}
f_i$ for all $i$, then $H_t g=\mathbb{E}_{\pi}H_t g+ \sum_{i=2}^{|\Omega|} \langle g,f_{i}\rangle_{\pi}e^{-(1-\lambda_i^{\mathrm{ct}})t} f_i$, for all $g \in \R^{\Omega}$ and $t \ge 0$. Thus the $L^2$-contraction Lemma takes the following form in continuous-time (see e.g.~Lemma 20.5 in \cite{levin2009markov}):
\begin{equation}
\label{eq: L2contractionct}
\Var_{\pi}H_tf \le e^{-2t/t_{\mathrm{rel}}^{\mathrm{ct}}}\Var_{\pi}f, \text{ for any }f \in \R^{\Omega}, \text{ for any }t \ge 0.
\end{equation}
Starr's inequality holds also in continuous-time (\cite{starr1966operator}
Proposition 3) and takes the following form. Let $f \in \R^{\Omega}$. Define the \textbf{\textit{continuous-time maximal function}}
as $f_{\mathrm{ct}}^{*}(x):=\sup_{t \ge 0}|H_{t}f (x)|$. Then
\begin{equation}
\label{eq: ctmaxergthm}
\|f_{\mathrm{ct}}^{*} \|_{2} \le 2 \|f \|_2.
\end{equation}
We note that our proof of Theorem \ref{thm: maxergodic} can easily be adapted to the continuous-time case.

For any $A \subset \Omega$ and $s \in \R_+$, set $\rho(A):=\sqrt{\pi(A)(1-\pi(A))} $ and $\sigma_{s}^{\mathrm{ct}}:=\rho(A)e^{s/t_{\mathrm{rel}}^{\mathrm{ct}}} $. Define
$$G_{s}^{\mathrm{ct}}(A,m):=\left\{ y: |\h_y^k(A)-\pi(A)| < m\sigma_s^{\mathrm{ct}} \text{
for all }k \ge s \right\}, $$
Then similarly to Corollary \ref{cor: maxergcor}, combining (\ref{eq: L2contractionct}) and (\ref{eq: ctmaxergthm}) (in continuous-time there is no need to treat odd and even times separately) yields
\begin{equation}
\label{eq: Goodsetct}
\pi(G_{s}^{\mathrm{ct}}(A,m)) \ge 1-4/m^2, \text{ for all }A \subset \Omega,\, s\ge 0 \text{ and }m>0. 
\end{equation}
The proof of Corollary \ref{cor: hitprob} carries over to the continuous-time case (where everywhere in (\ref{eq: hitprob1})-(\ref{eq: hitTV2}),
$t_{\mathrm{mix}}$ and $\mathrm{hit}$ are replaced by $t_{\mathrm{mix}}^{\mathrm{ct}}$
and $\mathrm{hit}^{\mathrm{ct}}$, respectively,  and all ceiling signs are omitted), using (\ref{eq: Goodsetct}) rather than (\ref{eq: G_t}) as in the discrete-time case.

In Lemma \ref{lem: CMlem}, we showed that for any non-empty $A \subsetneq \Omega$ such that $P_{A^c}$ is irreducible, $P_{A^{c}}$ has $k$ real eigenvalues $1-\frac{\pi(A)}{t_{\mathrm{
rel}}^{\mathrm{ct}}}\ge \gamma_1 > \gamma_2 \ge \cdots
\ge \gamma_{k} \ge -\gamma_1$ and that there exists some convex combination $a_1,\ldots,a_k$ such that $\Pr_{\pi_{A^c}}[T_A > t]= \sum_{i=1}^{k}a_i\gamma_i^t \le (1-\frac{\pi(A)}{t_{\mathrm{
rel}}^{\mathrm{ct}}})^{t}\le \exp \left(-\frac{t\pi(A)}{t_{\mathrm{
rel}}^{\mathrm{ct}}}\right) $, for any $t \ge 0$. Repeating the argument while using the spectral decomposition of $(H_{A^c})_t$ (the restriction of $H_t$ to $A^c$) in continuous-time, rather than the discrete time spectral decomposition, yields that  $\h_{\pi_{A^c}}[T_A^{\ct} > t]= \sum_{i=1}^{k}a_i e^{-(1-\gamma_i)t}
\le \exp ({-\frac{t\pi(A)}{t_{\mathrm{
rel}}^{\mathrm{ct}}}})$, for any $t \ge 0$. 
Consequently, as in Lemma \ref{lem: AF1}, 
$B_{\mathrm{ct}}(A,w,\alpha):=\left\{y:\h_{y}
\left[T_{A}^{\ct} \ge \frac{t_{\mathrm{rel}}^{\mathrm{ct}}w }{\pi(A)}
\right] \ge \alpha  \right\}$ satisfies that 
\begin{equation}
\label{eq: CM2ct}
\pi \left(B_{\mathrm{ct}}(A,w,\alpha) \right) \le \pi(A^c)   e^{-w}\alpha^{-1} \text{, for all } w \ge 0 \text{ and } 0< \alpha \le 1.
\end{equation}
Using (\ref{eq: CM2ct}) rather than (\ref{eq: CM2}), Corollary \ref{prop: hitpqinequalities} is extended to the continuous-time case. Namely, for any reversible irreducible finite chain and any $0< \epsilon < \delta  <
1$,
\begin{equation}
\label{eq: thitinequality1ct}
\mathrm{hit}_{\beta }^{\mathrm{ct}}(\delta) \le \mathrm{hit}_{\alpha }^{\mathrm{ct}}(\delta)
\le \mathrm{hit}_{\beta }^{\mathrm{ct}}(\delta- \epsilon)+ \alpha^{-1} t_{\mathrm{rel}}^{\mathrm{ct}}
\log \left( \frac{1-\alpha}{(1-\beta)\epsilon} \right), \text{
for any } 0 < \alpha \le \beta < 1.
\end{equation}
Finally, using (\ref{eq: thitinequality1ct}), rather than (\ref{eq: thitinequality1}) as in the discrete-time case, together with the version of Corollary
\ref{cor: hitprob} for the continuous-time chain, the proof of Proposition \ref{prop: TVbound0} for the continuous-time case is concluded in the same manner as the proof in the discrete-time case.
\section{Trees}
\label{sec: trees}

We start with a few definitions. Let $T:=(V,E)$ be a finite tree. Throughout the section we fix some lazy Markov chain, $(V,P,\pi)$, on a finite tree $T:=(V,E)$. That is, a chain
with stationary distribution $\pi$ and state space $V$ such that $P(x,y)>0$ iff $\{x,y\} \in E$ or $y=x$ (in which case, $P(x,x)\ge 1/2$). Then $P$
is reversible by Kolmogorov's cycle condition.

Following \cite{peres2011mixing}, we call a vertex $v \in V$ a \emph{\textbf{central-vertex}} if each connected component of $T \setminus \{v\}$ has stationary probability at most 1/2. A central-vertex always exists (and there may be at most two central-vertices). Throughout, we fix a central-vertex $o$ and call it the \emph{\textbf{root}} of the tree. We denote a (weighted) tree with root $o$ by $(T,o)$. 

Loosely speaking, the analysis below shows that a chain on a tree satisfies the product condition iff it has a ``global bias" towards $o$. A non-intuitive result is that one can construct such unweighed trees \cite{peres2013total}. 

The root induces a partial order $\prec $ on $V$,  as follows. For every $u \in V$, we denote the shortest path between $u$ and $o$ by $\ell(u)=(u_0=u,u_1,\ldots,u_k=o)$. We call $f_{u}:=u_1$ the \emph{\textbf{parent}} of $u$ and denote $\mu_u:=P(u,f_{u})$. We say that $u' \prec u$ if $u' \in \ell(u)$. Denote $W_{u}:=\{v: u \in \ell(v) \}$.
Recall that for any $\emptyset \neq A \subset V $, we write $\pi_{A}$ for the distribution of $\pi$ conditioned on $A$,  $\pi_{A}(\cdot):=\frac{\pi(\cdot) 1_{\cdot \in A}}{\pi(A)}$.

%
A key observation is that starting from the central vertex $o$ the chain mixes rapidly (this follows implicitly from the following ananlysis). Let $T_o$ denote the hitting time of the central vertex. We define the mixing parameter $\tau(\epsilon)$ for $\epsilon\in (0,1)$ by $$\tau_o(\epsilon):=\min\{t:\Pr_{x}[T_o>t]\leq \epsilon~\forall x\in \Omega\}.$$
We show that up to terms of the order of the relaxation-time (which are negligible under the product condition) $\tau_o(\cdot)$ approximates ${\rm hit}_{1/2}(\cdot)$ and then using Proposition \ref{prop: TVbound0}, the question of cutoff is reduced to showing concentration for the hitting time of the central vertex. Below we make this precise.  
%
%
%


\begin{lemma}
\label{lem: TVboundstrees}
Denote $s_{\delta}:=\lceil 4t_{\rm rel} |\log (4 \delta /9 )|\rceil$. Then 
\begin{equation}
\label{e:hittocentral1}
\tau_o(\epsilon)\leq {\rm hit}_{1/2}(\epsilon) \leq \tau_o(\epsilon-\delta)+
s_{\delta}, \text{ for every } 0< \delta< \epsilon < 1.
\end{equation}
\end{lemma}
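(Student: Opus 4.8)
The plan is to prove the two inequalities in \eqref{e:hittocentral1} separately. The left inequality $\tau_o(\epsilon) \le {\rm hit}_{1/2}(\epsilon)$ should follow directly from the fact that the singleton $\{o\}$ is, in a suitable sense, ``worst'' among sets of measure $\ge 1/2$ with respect to being hit from an arbitrary start. More precisely, since $o$ is a central vertex, for any $x \in V$ the path from $x$ to $o$ must be traversed before the chain can reach any vertex in the component of $T \setminus \{o\}$ not containing $x$; in particular, writing $A_x$ for the union of $\{o\}$ with all components of $T\setminus\{o\}$ other than the one containing $x$, we have $\pi(A_x)\ge 1/2$ (this is exactly the defining property of a central vertex) and $T_{A_x} \ge T_o$ under $\Pr_x$. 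Hence $\Pr_x[T_o > t] \le \Pr_x[T_{A_x} > t] \le p(1/2, t)$ for every $x$, and taking the infimum over $t$ with $p(1/2,t) \le \epsilon$ gives $\tau_o(\epsilon) \le {\rm hit}_{1/2}(\epsilon)$.

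For the right inequality I would use the good-set machinery of Corollary \ref{cor: maxergcor} together with the rapid mixing from the central vertex. Fix a set $A$ with $\pi(A)\ge 1/2$, set $m = 3/\sqrt{\delta}$ (so that $\pi(G_s(A,m)) \ge 1 - 8/m^2 = 1 - 8\delta/9 \ge 1 - \delta$ by \eqref{eq: G_t}), and choose $s = s_\delta = \lceil 4 t_{\rm rel}|\log(4\delta/9)|\rceil$ so that the precision $m\sigma_s = (3/\sqrt\delta)\rho(A)e^{-s/t_{\rm rel}} \le (3/\sqrt\delta)\cdot\tfrac12\cdot e^{-s/t_{\rm rel}}$ is at most some target bound of the order $\delta$ — here I should double-check that with $\rho(A) \le 1/2$ and this choice of $s$ one indeed gets $m\sigma_s$ small enough (a routine computation, but the exact constants $4$ and $4/9$ in $s_\delta$ are presumably calibrated precisely so that, combined with an extra factor from the event $\{T_o > t\}$, the final bound comes out as $\tau_o(\epsilon-\delta)$ rather than something weaker). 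The key structural input is that $o$ lies in $G_s(A,m)$ for a set of large measure is not quite what we want; rather, I want to argue that the chain started at $o$ is close to $\pi$ at all sufficiently large times, i.e. $o$ itself behaves like a point of the good set. In fact the cleaner route is: from $o$, the chain reaches the good set $G := G_s(A,m)$ essentially immediately in the relevant time-scale — but more directly, one shows $o \in G_{s'}(A,m')$ for appropriate parameters, using that from the central vertex the chain mixes on the scale $t_{\rm rel}$ (this is the ``central vertex mixes rapidly'' remark).

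Concretely, I would run the following two-stage argument for the upper bound, mimicking the proof of \eqref{eq: mainidea}: given $x$ and $A$ with $\pi(A) \ge 1/2$, write, for $t = \tau_o(\epsilon-\delta)$,
\[
|\Pr_x^{t+s}(A) - \pi(A)| \le \Pr_x[T_o > t] + |\Pr_x[X_{t+s} \in A \mid T_o \le t] - \pi(A)|,
\]
and bound the first term by $\epsilon - \delta$ by the definition of $\tau_o$. For the second term, condition on $X_{T_o} = o$ and on $T_o \le t$, and apply the Markov property at the (random) time $T_o$: I then need $\sup_{s' \ge s}|\Pr_o^{s'}(A) - \pi(A)| \le \delta$, which is precisely the statement that $o \in G_s(A, m)$. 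This membership is where the good-set corollary does its work, but note $o \in G_s(A,m)$ is not automatic from $\pi(G_s(A,m)) \ge 1-\delta$ — so the honest argument must instead be that $\Pr_o[T_{G_s(A,m)} \le s_1]$ is close to $1$ for some $s_1$ of order $t_{\rm rel}$, again using rapid escape from $o$, and then absorb $s_1$ into $s_\delta$; alternatively one shows directly via the $L^2$-contraction lemma that $\|P^k(1_A - \pi(A))\|_\infty$ started from $o$ decays, using that $\pi(o)$ is not too small relative to $t_{\rm rel}$ on a tree. I expect \textbf{this point — establishing that the central vertex itself, not merely a large-measure set, enjoys the good-set estimate — to be the main obstacle}, and I anticipate it is handled by an auxiliary lemma (appearing later in Section \ref{sec: trees}) giving a lower bound on $\pi(o)$ or directly bounding the mixing from $o$, after which the stated constants in $s_\delta$ fall out of bookkeeping. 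Taking the supremum over $x$ and over all $A$ with $\pi(A) \ge 1/2$ then yields $d_{\rm w.r.t.\ 1/2\text{-sets}}(t+s) \le \epsilon$, i.e. $p(1/2, t+s) \le \epsilon$, which is ${\rm hit}_{1/2}(\epsilon) \le \tau_o(\epsilon - \delta) + s_\delta$.
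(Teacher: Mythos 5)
Your proof of the first inequality is exactly the paper's argument and is correct. The second inequality, however, contains a genuine gap. The decomposition you run controls the wrong quantity: you bound $|\Pr_x^{t+s}(A)-\pi(A)|$ and then conclude $p(1/2,t+s)\le \epsilon$, i.e.\ $\mathrm{hit}_{1/2}(\epsilon)\le \tau_o(\epsilon-\delta)+s_\delta$. But $\mathrm{hit}_{1/2}$ is defined through the tail of the hitting time $T_A$, and for a set whose measure is only slightly above $1/2$, knowing $|\Pr_x^{t+s}(A)-\pi(A)|\le \epsilon$ yields only $\Pr_x[T_A>t+s]\le \Pr_x[X_{t+s}\notin A]\le 1-\pi(A)+\epsilon$, which can be as large as $1/2+\epsilon$ --- nowhere near the required $\epsilon$. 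The computation from (\ref{eq: mainidea}) that you are mimicking is used in the paper in the opposite direction (controlling total variation by hitting times); here the target is a hitting estimate, so the decomposition must stay with hitting times throughout: by the Markov property, $\Pr_x[T_A>\tau_o(\epsilon-\delta)+s_\delta]\le \Pr_x[T_o>\tau_o(\epsilon-\delta)]+\Pr_o[T_A>s_\delta]$, and everything reduces to showing $\Pr_o[T_A>s_\delta]\le \delta$.

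Moreover, the step you yourself flag as the main obstacle --- making the central vertex itself satisfy a good-set type estimate --- is left unresolved, and it is not how the paper proceeds; no maximal-inequality machinery is needed in this lemma. To bound $\Pr_o[T_A>s_\delta]$ for $\pi(A)\ge 1/2$ with $o\notin A$, the paper partitions $T\setminus\{o\}=T_1\cup T_2$ into unions of components with $\pi(T_1),\pi(T_2)\le 2/3$, sets $A_i:=A\cap T_i$ and assumes without loss of generality $\pi(A_1)\ge 1/4$; since every trajectory started in $B:=T_2\cup\{o\}$ must pass through $o$ before reaching $A_1$, one gets $\Pr_o[T_{A_1}>s]\le \Pr_{\pi_B}[T_{A_1}>s]\le \pi(B)^{-1}\Pr_\pi[T_{A_1}>s]$, and the stationary-start estimate (\ref{eq: CM1}) of Lemma \ref{lem: AF1}, with $\pi(A_1)\ge 1/4$ and $\pi(B)\ge 1/3$, gives the bound $\delta$ at time $s_\delta$ (this is where the constants $4$ and $4/9$ in $s_\delta$ come from). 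Your suggested patch --- hitting the good set $G_s(A,m)$ quickly from $o$ --- would in any case require precisely such a hitting estimate from $o$ for a set of measure bounded below, at which point the detour through $G_s(A,m)$ is superfluous; and because of the first issue it would still not deliver the hitting bound the lemma asserts.
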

\begin{proof}
First observe that by the definition of central vertex, for any $x\in V$, $x\neq o$ there exists a set $A$ with $\pi(A)\geq \frac{1}{2}$ such that the chain starting at $x$ cannot hit $A$ without hitting $o$. Indeed, we can take $A$ to be the union of $\{o\}$ and all components of $T\setminus \{o\}$ not containing $x$. The first inequality in (\ref{e:hittocentral1}) follows trivially from this. 

To establish the other inequality, fix $A\subseteq V$ with $\pi(A)\geq \frac{1}{2}$, $x \in V$ and some $0<\delta < \epsilon <1$. It follows using Markov
property and the definition of $\tau_o(\epsilon-\delta) $ that $$\Pr_x[T_A>\tau_o(\epsilon-\delta)+s_{\delta}]\leq
\Pr_x[T_o>\tau_o(\epsilon-\delta)]+\Pr_o[T_A>s_{\delta}] \le \epsilon -\delta +\Pr_o[T_A>s_{\delta}] .$$ Hence it suffices to show that $\Pr_o[T_{A}>s_{\delta}] \le \delta$. If $o\in A$ then $\Pr_o[T_A>s_{\delta}]=0$, so without loss of generality assume $o\notin A$. It is easy to see that we can partition  $T\setminus \{o\}=T_1\cup T_2$  such that both $T_1$ and $T_2$ are unions of components of $T\setminus \{o\}$ and $\pi(T_1),\pi(T_2)\leq 2/3$. For $i=1,2$, let $A_i:=A\cap T_i$ and without loss of generality let us assume $\pi(A_1)\geq \frac{1}{4}$. Let $B=T_2\cup \{o\}$. Clearly the chain started at any $x\in B$ must hit $o$ before hitting $A_{1}$. Hence
\begin{equation}
\label{e:hittocentral2}
\Pr_{o}[T_A>s_{\delta}]\leq \Pr_{o}[T_{A_1}>s_{\delta}]\leq \Pr_{\pi_B}[T_{A_1}>s_{\delta}]\leq \pi(B)^{-1}\Pr_{\pi}[T_{A_1}>s_{\delta}] 
\end{equation}

Using $\pi(A_1)\geq \frac{1}{4}$, $\pi(B)\geq \frac{1}{3}$ it follows from (\ref{eq: CM1}) that $\pi(B)^{-1}\Pr_{\pi}[T_{A_1}>s_{\delta}] \le \delta $.      
\end{proof}

In light of Lemma \ref{lem: TVboundstrees} and Proposition \ref{prop: TVbound0}, in order to show that in the setup of Theorem \ref{thm: treescutoff} (under the product condition) cutoff occurs it suffices to show that  $\tau_{o_n}^{(n)}(\epsilon)-\tau_{o_n}^{(n)}(1-\epsilon) =o(t_{\mathrm{mix}}^{(n)})$, for any $\epsilon \in (0,1/4]$. We  actually show more than that. Instead of identifying the ``worst" starting position $x$ and proving that $T_{o}$ is concentrated under $\Pr_x$, we shall show that for any $x,y \in V_n $ such that $y \prec x$ and $\mathbb{E}_x[T_y]=\Theta(t_{\mathrm{mix}}^{(n)}) $, $T_y$ is concentrated under $\Pr_x$, around $\mathbb{E}_x[T_y]$, with deviations of order $\sqrt{t_{\mathrm{rel}}^{(n)} t_{\mathrm{mix}}^{(n)}} $. This shall follow from Chebyshev inequality, once we establish that $\Var_{x}[T_{y}] \le 4t_{\mathrm{rel}}\mathbb{E}_x[T_y]$.

Let $(v_0=x,v_1,\ldots,v_{k}=y)$ be the path from $x$ to $y $ ($y \prec x$). Define $\tau_i:=T_{v_{i}}-T_{v_{i-1}}$. Then by the tree structure, under $\Pr_x$ we have that $T_{y}=\sum_{i=1}^{k}\tau_i$ and that $\tau_1,\ldots,\tau_k$ are independent. This reduces the task of bounding $\Var_{x}[T_{y}] $ from above, to the task of estimating $\Var_{v_{i}}[T_{v_{i+1}}]=\Var_{v_{i}}[T_{f_{v_{i}}}] $ from above for each $i$.  
\begin{lemma}
\label{lem: tree1}
For any vertex $u\neq o$ we have that
\begin{equation}
\label{eq: crossingtime}
t_{u}:=\mathbb{E}_{u}[T_{f_{u}}]=\frac{\pi(W_u)}{\pi(u)\mu_u} \text{ and } r_u:=\mathbb{E}_{u}[T_{f_{u}}^2]=2t_u \mathbb{E}_{\pi_{W_u}}[T_{f_{u}}]-t_u \le 4t_u t_{\mathrm{rel}}.
\end{equation}
\end{lemma}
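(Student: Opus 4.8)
The plan is to analyse the chain killed upon exiting $W_u$. Write $B:=W_u$, let $P_B$ be the restriction of $P$ to $B$, and let $G_B:=(I-P_B)^{-1}=\sum_{k\ge 0}P_B^k$ be the associated Green's function (well defined since $P_B$ is irreducible — $W_u$ is a subtree — and strictly substochastic, owing to the deficient row at $u$). Because $T$ is a tree, $W_u$ is attached to the rest of $T$ by the single edge $\{u,f_u\}$, so $f_u$ is the only vertex of $W_u^c$ adjacent to $W_u$, and $u$ separates $W_u\setminus\{u\}$ from $f_u$; in particular, for every $v\in W_u$ we have $T_{f_u}=T_{W_u^c}$ under $\Pr_v$, and the walk from any $v\in W_u$ must visit $u$ before reaching $f_u$. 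Put $g(v):=\mathbb{E}_v[T_{f_u}]$ for $v\in W_u$ and $g(f_u):=0$. A one-step analysis gives $(I-P_B)g=\mathbf 1$ on $W_u$, hence $g=G_B\mathbf 1$ and $t_u=g(u)=\sum_{v\in W_u}G_B(u,v)$; conditioning $\mathbb{E}_v[T_{f_u}^2]$ on the first step and using $(P_Bg)(v)=g(v)-1$ gives $(I-P_B)h=2g-\mathbf 1$ for $h(v):=\mathbb{E}_v[T_{f_u}^2]$, whence $h=2G_Bg-g$ and $r_u=h(u)=2\sum_{v\in W_u}G_B(u,v)g(v)-t_u$.

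I would then use two properties of $G_B$. First, reversibility of $P$ restricts to $P_B$, so $\pi(x)G_B(x,y)=\pi(y)G_B(y,x)$ for $x,y\in W_u$. Second, the separation property forces $G_B(v,u)$ to be the same for every $v\in W_u$: starting from any $v$ the walk reaches $u$ before $f_u$, and from $u$ the only way to reach $f_u$ without first returning to $u$ is the single step $u\to f_u$, so the number of visits to $u$ before $T_{f_u}$ is Geometric with parameter $\mu_u$ regardless of the starting point in $W_u$; hence $G_B(v,u)=G_B(u,u)=1/\mu_u$ for all $v\in W_u$. Substituting, $t_u=\sum_{v\in W_u}G_B(u,v)=\frac1{\pi(u)}\sum_{v\in W_u}\pi(v)G_B(v,u)=\frac1{\pi(u)\mu_u}\sum_{v\in W_u}\pi(v)=\frac{\pi(W_u)}{\pi(u)\mu_u}$, the first identity; and, using $\frac1{\pi(u)\mu_u}=\frac{t_u}{\pi(W_u)}$ together with $\mathbb{E}_{\pi_{W_u}}[T_{f_u}]=\pi(W_u)^{-1}\sum_{v\in W_u}\pi(v)g(v)$, $r_u=2\sum_{v\in W_u}\frac{\pi(v)}{\pi(u)}G_B(v,u)g(v)-t_u=\frac{2}{\pi(u)\mu_u}\sum_{v\in W_u}\pi(v)g(v)-t_u=2t_u\,\mathbb{E}_{\pi_{W_u}}[T_{f_u}]-t_u$, the second identity.

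For the inequality it suffices, since $r_u\le 2t_u\,\mathbb{E}_{\pi_{W_u}}[T_{f_u}]$, to show $\mathbb{E}_{\pi_{W_u}}[T_{f_u}]\le 2t_{\mathrm{rel}}$. I would apply Lemma~\ref{lem: CMlem}(iii) with $A:=W_u^c$ (so $A^c=W_u$ is connected, hence $P_{W_u}$ is irreducible, as the lemma requires): since $T_{f_u}=T_A$ under $\Pr_{\pi_{W_u}}$,
\[
\mathbb{E}_{\pi_{W_u}}[T_{f_u}]=\sum_{t\ge 0}\Pr_{\pi_{W_u}}[T_A>t]\le\sum_{t\ge 0}\Bigl(1-\tfrac{\pi(A)}{t_{\mathrm{rel}}}\Bigr)^{t}=\frac{t_{\mathrm{rel}}}{\pi(A)}.
\]
It remains to note that $\pi(A)=\pi(W_u^c)\ge\tfrac12$: as $u\ne o$ and $o$ is a central vertex, the path from any $v\in W_u$ to $o$ passes through $u$, so $W_u$ lies inside the connected component of $T\setminus\{o\}$ containing $u$, a set of $\pi$-measure at most $\tfrac12$; hence $\pi(W_u)\le\tfrac12$. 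Therefore $\mathbb{E}_{\pi_{W_u}}[T_{f_u}]\le 2t_{\mathrm{rel}}$ and $r_u\le 4t_ut_{\mathrm{rel}}$.

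The only step that is not routine bookkeeping with Green's functions and reversibility is the identity $G_B(v,u)=1/\mu_u$ for all $v\in W_u$; I would prove it carefully via the strong Markov property at the successive return times of the walk to $u$, using the separation property to see that from a child of $u$ the walk returns to $u$ before $T_{f_u}$ almost surely. Everything else is a matter of organizing the one-step identities and inserting the estimate from Lemma~\ref{lem: CMlem}.
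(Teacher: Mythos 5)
Your proof is correct, but it follows a different route from the paper. The paper obtains Lemma \ref{lem: tree1} as a special case of Proposition \ref{prop: Kac}: taking $B=W_u$, $A=W_u^c$, the tree structure forces the re-entry distribution $\psi_{B}$ to be $\delta_u$ and $\Phi(B)=\pi(u)\mu_u/\pi(W_u)$, so the identities $t_u=1/\Phi(B)$ and $r_u=t_u\left(2\mathbb{E}_{\pi_{W_u}}[T_{f_u}]-1\right)$ drop out of the general Kac-type formula $\Pr_{\pi_B}[T_A=t]/\Phi(B)=\Pr_{\psi_B}[T_A\ge t]$, which is proved by a stationarity/time-reversal telescoping argument; the final bound comes from the second inequality in (\ref{eq: CM2}) together with $\pi(W_u^c)\ge 1/2$. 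You instead reprove these identities in the tree-specific case by a direct Green's function computation: the one-step relations $(I-P_B)g=\mathbf 1$ and $(I-P_B)h=2g-\mathbf 1$, the reversibility symmetry $\pi(x)G_B(x,y)=\pi(y)G_B(y,x)$, and the observation that the number of visits to $u$ before $T_{f_u}$ is Geometric$(\mu_u)$ from any starting point of $W_u$ (which is exactly where the tree's separation property enters, playing the role of $\psi_B=\delta_u$ in the paper). Your tail bound invokes Lemma \ref{lem: CMlem}(iii) directly (legitimately, since $W_u$ is connected so $P_{W_u}$ is irreducible) rather than (\ref{eq: CM2}), but this is the same spectral ingredient. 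What the paper's route buys is generality: Proposition \ref{prop: Kac} holds for arbitrary sets and is reused later (e.g.\ in Proposition \ref{prop: Kac2} and Lemma \ref{lem: Kac3}), whereas your argument is self-contained, elementary, and makes the role of the tree geometry more transparent; both hinge on reversibility, the estimate $\mathbb{E}_{\pi_{W_u}}[T_{W_u^c}]\le t_{\mathrm{rel}}/\pi(W_u^c)$, and the centrality of $o$ to get $\pi(W_u^c)\ge 1/2$.
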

The assertion of Lemma \ref{lem: tree1} follows as a particular case of Proposition \ref{prop: Kac} at the end of this section. 
\begin{corollary}
\label{cor: concentrationfortrees}
Let $x,y \in V $ be such that $y \preceq x$ and $c \ge 0$. Denote $\sigma_{x,y}:=\sqrt{4 \mathbb{E}_{x}[T_{y}] t_{\mathrm{rel}}}$. Then
\begin{equation}
\label{eq: VarofTybeta}
\Var_{x}[T_{y}] \le \sigma_{x,y}^2,
\end{equation}
and 
\begin{equation}
\label{eq: VarofTybeta2}
\Pr_{x}[T_{y}  \ge \mathbb{E}_{x}[T_{y}]+ c \sigma_{x,y} ] \le \frac{1}{1+c^2} \text{ and } \Pr_{x}[T_{y}  \le \mathbb{E}_{x}[T_{y}]- c \sigma_{x,y}
] \le  \frac{1}{1+c^2}.
\end{equation}
In particular, if $(V_n,P_n,\pi_n)$ is a sequence of lazy Markov chains on trees $(T_n,o_n)$ which satisfies the product condition, and $x_n,y_n \in V_n$ satisfy that $y_n \prec x_n$ and $\mathbb{E}_{x_n}[T_{y_{n}}]/t_{\mathrm{rel}}^{(n)}\to \infty$, then for any $\epsilon>0$ we have that
\begin{equation}
\label{eq: VarofTybeta3}
\lim_{n \to \infty} \Pr_{x_{n}}[|T_{y_{n}}-\mathbb{E}_{x_n}[T_{y_{n}}]| \ge \epsilon \mathbb{E}_{x_n}[T_{y_{n}}] ]=0.
\end{equation}
\end{corollary}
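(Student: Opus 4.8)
The plan is to read off every assertion of Corollary \ref{cor: concentrationfortrees} from Lemma \ref{lem: tree1} together with the decomposition of $T_y$ into independent crossing times that was recorded in the paragraph preceding that lemma. We may assume $y \prec x$, since if $y = x$ then $T_y \equiv 0$ under $\Pr_x$ and there is nothing to prove. Writing the path from $x$ to $y$ as $(v_0 = x, v_1, \dots, v_k = y)$ with $v_i = f_{v_{i-1}}$, we have under $\Pr_x$ that $T_y = \sum_{i=1}^k \tau_i$, where the $\tau_i := T_{v_i} - T_{v_{i-1}}$ are independent and $\tau_i$ is distributed as $T_{f_{v_{i-1}}}$ under $\Pr_{v_{i-1}}$. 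Hence $\Var_x[T_y] = \sum_{i=1}^k \Var_{v_{i-1}}[T_{f_{v_{i-1}}}] \le \sum_{i=1}^k r_{v_{i-1}}$, and by the bound $r_u \le 4 t_u t_{\mathrm{rel}}$ of Lemma \ref{lem: tree1} together with the additivity of means $\E_x[T_y] = \sum_{i=1}^k \E_{v_{i-1}}[T_{f_{v_{i-1}}}] = \sum_{i=1}^k t_{v_{i-1}}$, this yields $\Var_x[T_y] \le 4 t_{\mathrm{rel}} \sum_{i=1}^k t_{v_{i-1}} = 4 t_{\mathrm{rel}}\, \E_x[T_y] = \sigma_{x,y}^2$, which is (\ref{eq: VarofTybeta}).

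Next I would deduce (\ref{eq: VarofTybeta2}) from the one-sided Chebyshev (Cantelli) inequality applied to $T_y$ under $\Pr_x$, which has mean $\E_x[T_y]$ and variance $\Var_x[T_y] =: v \le \sigma_{x,y}^2$: for every $a > 0$ one has $\Pr_x[T_y \ge \E_x[T_y] + a] \le v/(v + a^2)$ and $\Pr_x[T_y \le \E_x[T_y] - a] \le v/(v + a^2)$. Taking $a = c\sigma_{x,y}$ for $c > 0$ and using $v \le \sigma_{x,y}^2$ from (\ref{eq: VarofTybeta}) together with the monotonicity of $t \mapsto t/(t + c^2\sigma_{x,y}^2)$ on $[0,\infty)$, both tail probabilities are at most $\sigma_{x,y}^2/(\sigma_{x,y}^2 + c^2\sigma_{x,y}^2) = 1/(1+c^2)$; the case $c = 0$ is trivial. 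For the final statement (\ref{eq: VarofTybeta3}), the ordinary two-sided Chebyshev inequality and (\ref{eq: VarofTybeta}) give, for each $\epsilon > 0$, that $\Pr_{x_n}[|T_{y_n} - \E_{x_n}[T_{y_n}]| \ge \epsilon\, \E_{x_n}[T_{y_n}]] \le \Var_{x_n}[T_{y_n}] / (\epsilon^2 \E_{x_n}[T_{y_n}]^2) \le 4 t_{\mathrm{rel}}^{(n)} / (\epsilon^2 \E_{x_n}[T_{y_n}])$, which tends to $0$ because $\E_{x_n}[T_{y_n}] / t_{\mathrm{rel}}^{(n)} \to \infty$ by hypothesis.

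The main point to stress is that there is essentially no obstacle: all of the substance is contained in Lemma \ref{lem: tree1}, which is itself obtained below as a special case of Proposition \ref{prop: Kac}. The only two things that require any care are (i) the independence and additivity of the crossing times $\tau_i$ along the path from $x$ to $y$ — a consequence of the strong Markov property and the fact that on a tree the walk from $v_{i-1}$ must revisit $v_{i-1}$ before it can reach $v_i = f_{v_{i-1}}$, which has already been recorded in the discussion preceding Lemma \ref{lem: tree1}; and (ii) in (\ref{eq: VarofTybeta2}), the elementary monotonicity observation that legitimizes replacing the exact variance by its upper bound $\sigma_{x,y}^2$ inside Cantelli's inequality.
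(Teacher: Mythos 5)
Your proposal is correct and follows essentially the same route as the paper: the independent-crossing-time decomposition along the path plus the bound $r_u \le 4 t_u t_{\mathrm{rel}}$ from Lemma \ref{lem: tree1} gives (\ref{eq: VarofTybeta}), and the one-sided Chebyshev (Cantelli) inequality gives (\ref{eq: VarofTybeta2}), from which (\ref{eq: VarofTybeta3}) follows. Your direct use of the two-sided Chebyshev inequality for (\ref{eq: VarofTybeta3}) is an immaterial variant of the paper's deduction from (\ref{eq: VarofTybeta2}).
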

\begin{proof}
We first note that (\ref{eq: VarofTybeta2}) follows from (\ref{eq: VarofTybeta}) by the one-sided Chebyshev inequality. Also, (\ref{eq: VarofTybeta3}) follows immediately from (\ref{eq: VarofTybeta2}). We now prove (\ref{eq: VarofTybeta}). Let $(v_0=x,v_1,\ldots,v_{k}=y)$ be the path from $x$ to $y$.
Define $\tau_i:=T_{v_{i}}-T_{v_{i-1}}$. Then by the tree structure, under
$\Pr_x$, we have that $T_{y}=\sum_{i=1}^{k}\tau_i$ and that $\tau_1,\ldots,\tau_k$
are independent. Whence, by (\ref{eq: crossingtime}) we get that
$$\Var_{x}[T_{y}] = \sum_{i=1}^{k} \Var_{x}[\tau_i]= \sum_{i=1}^{k} \Var_{v_{i-1}}[T_{v_{i}}] \le  \sum_{i=1}^{k}
\mathbb{E}_{v_{i-1}}[T_{v_{i}}^2] \le 4 t_{\mathrm{rel}} \sum_{i=1}^{k}
\mathbb{E}_{v_{i-1}}[T_{v_{i}}]=\sigma_{x,y}^2.$$
This completes the proof.
\end{proof}
\begin{lemma}
\label{lem: hittingsetsofsize3/4}
If $(V,P,\pi)$ is a lazy chain on a (weighted) tree $(T,o)$ then
\begin{equation}
\label{eq: trivial}
\mathbb{E}_x[T_{o}] \le 4t_{\mathrm{mix}}, \text{ for all }x \in V.
\end{equation}  
\end{lemma}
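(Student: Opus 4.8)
The plan is to reduce the bound on $\mathbb{E}_x[T_o]$ to a bound on the expected hitting time of a set of stationary measure at least $1/2$, and then to control the latter by $t_{\mathrm{mix}}$ via a standard ``mixing forces rapid hitting'' argument (this yields, in passing, an explicit-constant version of one direction of (\ref{eq: tHalpha}) for $\alpha=1/2$). Concretely, I will first show that under $\Pr_x$ the variable $T_o$ is dominated by $T_A$ for a suitable set $A$ with $\pi(A)\ge 1/2$, and then establish the general estimate $\mathbb{E}_x[T_A]\le 4t_{\mathrm{mix}}$, valid for every $x\in V$ and every $A\subseteq V$ with $\pi(A)\ge 1/2$.

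For the domination, the case $x=o$ is trivial since then $T_o=0$. For $x\neq o$, recall the set constructed in the proof of Lemma \ref{lem: TVboundstrees}: since $o$ is a central vertex, the set $A$ obtained as the union of $\{o\}$ with all connected components of $T\setminus\{o\}$ other than the component $C_x$ containing $x$ satisfies $\pi(A)=1-\pi(C_x)\ge 1/2$. Because $T$ is a tree and the walk moves along edges, any trajectory started at $x$ that reaches $A$ must either be at $o$ at that moment or have passed through $o$ strictly earlier, as $o$ separates $C_x$ from every other component; in either case $T_o\le T_A$ along that trajectory. Hence $\mathbb{E}_x[T_o]\le\mathbb{E}_x[T_A]$.

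It remains to prove $\mathbb{E}_x[T_A]\le 4t_{\mathrm{mix}}$ when $\pi(A)\ge 1/2$. Since $d(t_{\mathrm{mix}})\le 1/4$ by definition of $t_{\mathrm{mix}}=t_{\mathrm{mix}}(1/4)$, for every $y\in V$ we have $\Pr_y[X_{t_{\mathrm{mix}}}\in A]\ge \pi(A)-d_y(t_{\mathrm{mix}})\ge \frac12-\frac14=\frac14$, and therefore $\Pr_y[T_A>t_{\mathrm{mix}}]\le \Pr_y[X_{t_{\mathrm{mix}}}\notin A]\le 3/4$. Conditioning on the position at time $kt_{\mathrm{mix}}$ and applying the Markov property, an induction on $k$ gives $\Pr_x[T_A> kt_{\mathrm{mix}}]\le (3/4)^k$ for all $k\ge 0$. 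Since $t\mapsto \Pr_x[T_A>t]$ is non-increasing, summing in blocks of length $t_{\mathrm{mix}}$ yields
$$\mathbb{E}_x[T_A]=\sum_{t\ge 0}\Pr_x[T_A>t]\le t_{\mathrm{mix}}\sum_{k\ge 0}\Pr_x[T_A>kt_{\mathrm{mix}}]\le t_{\mathrm{mix}}\sum_{k\ge 0}(3/4)^k=4t_{\mathrm{mix}},$$
which completes the argument. There is no genuine obstacle in this proof; the only point requiring care is the geometric tail bound $\Pr_x[T_A>kt_{\mathrm{mix}}]\le(3/4)^k$, which rests on the non-hitting probability of $A$ over an interval of length $t_{\mathrm{mix}}$ being bounded away from $1$ \emph{uniformly} over starting states — exactly the uniformity that $d(t_{\mathrm{mix}})\le 1/4$ provides.
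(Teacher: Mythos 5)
Your proof is correct and is essentially the paper's argument: you reduce $T_o$ to the hitting time of the set $B=V\setminus C_x$ (which contains $o$, has $\pi(B)\ge 1/2$ by centrality of $o$, and must be entered through $o$ by the tree structure), and then run the standard checkpoint argument at multiples of $t_{\mathrm{mix}}$, which the paper phrases as stochastic domination of the number of required blocks by a Geometric$(1/4)$ variable and you phrase as the tail bound $\Pr_x[T_B>kt_{\mathrm{mix}}]\le(3/4)^k$ summed in blocks. Both give the same constant $4$, so no gap.
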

\begin{proof}
Fix some $x \in V $. Let $C_x$ be the component
of $T\setminus \{o\}$ containing $x$. Denote $B:=V \setminus C_x $. Consider
$\tau_{B}:=\inf\{k
\in \N : X_{kt_{\mathrm{mix}}}\in B \}$. Clearly, $T_{o} \le \tau_{B} t_{\mathrm{mix}}$.
Since $\pi(B)\geq 1/2$, by the Markov property and the definition of the
total variation distance,
the distribution of $\tau_{B}$ is stochastically dominated by the Geometric
distribution with parameter $1/2-1/4=1/4$. Hence $\mathbb{E}_{x}[T_{0}] =\mathbb{E}_{x}[T_{B}]
\le t_{\mathrm{mix}}\mathbb{E}_{x}[\tau_{B}] \le  4t_{\mathrm{mix}}$.
\end{proof} 
\begin{corollary}
\label{cor: concentrationfortrees1}
In the setup of Lemma \ref{lem: tree1}, for any $x \in V$ denote $t_{x}:=\mathbb{E}_{x}[T_o]$. Fix $\epsilon\in (0,\frac{1}{4}]$, Denote $$\rho:=\max_{x \in V }t_{x},\text{ and } \kappa_{\epsilon}:= \sqrt{ 4\epsilon^{-1} \rho t_{\mathrm{rel}}}, \text{ then} $$ 
\begin{equation}
\label{eq: Chebyshevbounds}
\rho \le 4t_{\mathrm{mix}},\, \tau_{o}(1-\epsilon) \ge \rho- \kappa_{\epsilon} \text{ and } \tau_{o}(\epsilon) < \rho+ \kappa_{\epsilon}.
\end{equation}
\end{corollary}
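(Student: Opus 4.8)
The plan is to deduce all three assertions of (\ref{eq: Chebyshevbounds}) from Lemma \ref{lem: hittingsetsofsize3/4} and the one-sided Chebyshev bounds (\ref{eq: VarofTybeta2}) of Corollary \ref{cor: concentrationfortrees}, specialized to $y=o$ — legitimate since $o\preceq x$ for every $x\in V$, $o$ being the root — and with the choice $c=\epsilon^{-1/2}$ (note $\epsilon\le 1/4$, so $c\ge 2$), which is exactly the value making $c\,\sigma_{x,o}\le\kappa_\epsilon$ for all $x$, with equality at a maximizing vertex. The first assertion, $\rho\le 4t_{\mathrm{mix}}$, is immediate: Lemma \ref{lem: hittingsetsofsize3/4} gives $t_x=\mathbb{E}_x[T_o]\le 4t_{\mathrm{mix}}$ for every $x$, hence so is the maximum $\rho$.

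For the upper bound $\tau_o(\epsilon)<\rho+\kappa_\epsilon$ I would fix $x\in V$ and apply the first inequality in (\ref{eq: VarofTybeta2}) with $y=o$ and $c=\epsilon^{-1/2}$, obtaining $\Pr_x[T_o\ge t_x+\epsilon^{-1/2}\sigma_{x,o}]\le (1+\epsilon^{-1})^{-1}=\epsilon/(1+\epsilon)<\epsilon$. Since $\sigma_{x,o}=\sqrt{4t_x t_{\mathrm{rel}}}\le\sqrt{4\rho t_{\mathrm{rel}}}$ and $t_x\le\rho$, one has $t_x+\epsilon^{-1/2}\sigma_{x,o}\le\rho+\sqrt{4\epsilon^{-1}\rho t_{\mathrm{rel}}}=\rho+\kappa_\epsilon$, so $\Pr_x[T_o\ge\rho+\kappa_\epsilon]<\epsilon$ for every $x$. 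As $T_o$ is integer-valued this means $\Pr_x[T_o>t]<\epsilon$ for all $x$ at some (integer) $t<\rho+\kappa_\epsilon$, i.e.\ $\tau_o(\epsilon)<\rho+\kappa_\epsilon$.

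For the lower bound $\tau_o(1-\epsilon)\ge\rho-\kappa_\epsilon$ I would pick $x^{*}\in V$ attaining $t_{x^{*}}=\mathbb{E}_{x^{*}}[T_o]=\rho$, so that $\sigma_{x^{*},o}=\sqrt{4\rho t_{\mathrm{rel}}}$ and $\epsilon^{-1/2}\sigma_{x^{*},o}=\kappa_\epsilon$. The second inequality in (\ref{eq: VarofTybeta2}) with $x=x^{*}$, $y=o$, $c=\epsilon^{-1/2}$ gives $\Pr_{x^{*}}[T_o\le\rho-\kappa_\epsilon]\le\epsilon/(1+\epsilon)<\epsilon$, hence $\Pr_{x^{*}}[T_o>\rho-\kappa_\epsilon]>1-\epsilon$; by monotonicity of $t\mapsto\Pr_{x^{*}}[T_o>t]$ the same strict inequality holds for all $t<\rho-\kappa_\epsilon$, so no such $t$ lies in the set defining $\tau_o(1-\epsilon)$, and therefore $\tau_o(1-\epsilon)\ge\rho-\kappa_\epsilon$.

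All the substance lives in Corollary \ref{cor: concentrationfortrees} (ultimately in the variance bound $\Var_x[T_o]\le 4t_{\mathrm{rel}}\mathbb{E}_x[T_o]$, obtained from the decomposition of $T_o$ into independent edge-crossing times on the tree). The only things to watch — and the closest thing to an obstacle here — are pairing the correct one-sided tail with the correct side of each inequality (the $\{T_o\ge\mathbb{E}+c\sigma\}$ tail for the upper bound on $\tau_o$, the $\{T_o\le\mathbb{E}-c\sigma\}$ tail for the lower one), the uniform estimate $\sigma_{x,o}\le\sqrt{4\rho t_{\mathrm{rel}}}$, and the routine conversion of the tail estimates at the real number $\rho\pm\kappa_\epsilon$ into the strict / non-strict statements for the integer-valued quantity $\tau_o$; I do not expect a genuine difficulty beyond this bookkeeping.
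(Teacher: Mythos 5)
Your proof is correct and follows essentially the same route as the paper: bound $\rho\le 4t_{\mathrm{mix}}$ via Lemma \ref{lem: hittingsetsofsize3/4}, then apply the one-sided Chebyshev bounds (\ref{eq: VarofTybeta2}) of Corollary \ref{cor: concentrationfortrees} with $y=o$ and a choice of $c$ (the paper takes $c_\epsilon=\sqrt{\epsilon^{-1}-1}$, you take $\epsilon^{-1/2}$) so that $c\,\sigma_{x,o}\le\kappa_\epsilon$ uniformly in $x$. The only difference is that you spell out the bookkeeping the paper leaves implicit.
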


\begin{proof}
By (\ref{eq: trivial}) $\rho \le 4t_{\mathrm{mix}}$. Denote $\sigma:= \sqrt{4\rho t_{\mathrm{rel}}}$ and $c_{\epsilon}:=\sqrt{
\epsilon^{-1} -1} $. Take $x\in V \setminus \{ o \} $. 
By (\ref{eq: VarofTybeta}) $\sigma_{x,o}^2:=\Var_{x}[T_{o}] \le \sigma^2$.  The assertion of the corollary now follows from (\ref{eq: VarofTybeta2}) by noting that $c_{\epsilon}\sigma\leq \kappa_{\epsilon}$.
\end{proof}
Now we are ready to prove Theorem \ref{thm: treescutoff}.
\begin{proof}[Proof of Theorem \ref{thm: treescutoff}] 
Fix $\epsilon \in (0,\frac{1}{4}]$. It follows from  (\ref{eq: introTVbound1})
and (\ref{eq: introTVbound2}) that 
\begin{equation}
\label{e:treeproof1}
t_{\rm mix}(\epsilon)-t_{\rm mix}(1-\epsilon) \leq {\rm hit}_{1/2}(\epsilon/2)-{\rm
hit}_{1/2}(1-\epsilon/2)+  t_{\rm rel} (3| \log  \epsilon | +\log 4) +2.
\end{equation}
Using Lemma \ref{lem: TVboundstrees} with $(\epsilon, \delta)$ there replaced
by $(\epsilon/2,\epsilon/4)$ it follows that 
\begin{equation}
\label{e:treeproof2}
{\rm hit}_{1/2}(\epsilon/2)- {\rm hit}_{1/2}(1-\epsilon/2)\leq \tau_o(\epsilon/4)-\tau_o(1-\epsilon/2)+s_{\epsilon/4}
\end{equation}
where $s_{\epsilon/4}$ is as in Lemma \ref{lem: TVboundstrees}. It follows
from (\ref{e:treeproof1}), (\ref{e:treeproof2}) and (\ref{eq: Chebyshevbounds})
that 
\begin{equation}
\label{e:treeproof3}
t_{\rm mix}(\epsilon)-t_{\rm mix}(1-\epsilon)\leq \kappa_{\epsilon/4}+\kappa_{\epsilon/2}+
t_{\rm rel}(7|\log \epsilon|+4 \log 9 - 3 \log 4) +3.
\end{equation}
It follows from (\ref{eq: Chebyshevbounds}) that $\kappa_{\epsilon/4}+\kappa_{\epsilon/2}
\leq 14\sqrt{\epsilon^{-1}t_{\rm rel}t_{\rm mix}}$. 
For any irreducible Markov
chain on $n>1$ states we have that $\lambda_2 \ge -\frac{1}{n-1}$ (\cite{aldous2000reversible},Chapter
3 Proposition 3.18). Hence for a lazy chain with at least 3 states
we have that $t_{\mathrm{rel}} \ge 4/3$ and so by (\ref{eq: t_relintro})
$t_{\mathrm{rel}} \le 6 (t_{\mathrm{rel}} -1)\log 2 \le 6t_{\mathrm{mix}}
$. Using the fact that $|\log \epsilon| \le \frac{2}{e\sqrt{\epsilon}} $
for every $0<\epsilon \le 1/4 $, it follows that $7t_{\rm rel}|\log \epsilon|\leq
7\sqrt{6}\frac{2}{e} \sqrt{\epsilon^{-1}t_{\rm rel}t_{\rm mix}}\leq 13\sqrt{\epsilon^{-1}t_{\rm
rel}t_{\rm mix}}$. As $\sqrt{6}(4 \log 9 - 3 \log 4)<12 $ and $\sqrt{ \epsilon^{-1}}
\geq 2$ we also have that $\rel (4 \log 9 - 3 \log 4)+3 \le 8 \sqrt{\epsilon^{-1}t_{\rm
rel}t_{\rm mix}} $. Plugging these estimates in  (\ref{e:treeproof3}) completes
the proof of the theorem.
\end{proof}
As promised earlier, the following proposition implies the assertion of Lemma \ref{lem: tree1}.
For any set $A \subset \Omega $, we define $\psi_{A^{c}}\in\mathscr{P}(A^{c})$
as $\psi_{A^{c}}(y):=\Pr_{\pi_{A}}[X_{1}=y \mid X_1 \in A^c ]$. For $A \subset
\Omega$, we denote $T_A^+:=\inf \{t \ge 1:X_t \in A \}$ and $\Phi(A):=\frac{\sum_{a
\in A,b \in A^c}\pi(a)P(a,b)}{\pi(A)}=\Pr_{\pi_A}[X_1 \notin A] $. Note
that
\begin{equation}
\label{eq: Q(A,A^c)=Q(A^c,A)}
\pi(A)\Phi(A)=\sum_{a
\in A,b \in A^c}\pi(a)P(a,b)=\sum_{a
\in A,b \in A^c}\pi(b)P(b,a)=\pi(A^c)\Phi(A^c).
\end{equation}
This is true even without reversibility, since the second term (resp.~third
term) is the asymptotic frequency of transitions from $A$ to $A^c$ (resp.~from
$A^c$ to $A$).
\begin{proposition}
\label{prop: Kac}
Let $(\Omega,P,\pi)$ be a finite irreducible reversible Markov chain. Let
$A \varsubsetneq\ \Omega$ be non-empty. Denote the complement of $A$ by $B$.
Then
\begin{equation}
\label{eq: Kac}
\Pr_{\pi_{B}}[T_{A}=t]/\Phi(B)=\Pr_{\psi_{B}}[T_{A} \ge t], \text{ for any
}t \ge 1.
\end{equation}
Consequently,
\begin{equation}
\label{eq: kac1}
\mathbb{E}_{\psi_{B}}[T_{A}]=\frac{1}{\Phi(B)} \text{ and } \mathbb{E}_{\psi_{B}}[T_{A}^{2}]=\mathbb{E}_{\psi_{B}}[T_{A}]
\left(2\mathbb{E}_{\pi_{B}}[T_{A}]-1 \right) \le \frac{2\mathbb{E}_{\psi_{B}}[T_{A}]t_{\mathrm{rel}}
}{\pi(A)}.
\end{equation}
\end{proposition}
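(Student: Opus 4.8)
The plan is to prove the identity (\ref{eq: Kac}) directly, by running the chain from $\pi$ and invoking reversibility in the form of path-reversal; this makes rigorous the heuristic that $\pi_B$ is a ``size-biased'' sample from the excursions of the stationary chain into $B:=\Omega\setminus A$, while $\psi_B$ is the distribution of the state at which such an excursion begins. Once (\ref{eq: Kac}) is available, (\ref{eq: kac1}) follows by elementary summation, using the bound $\pi(A)\mathbb{E}_\pi[T_A]\le t_{\mathrm{rel}}\pi(A^c)$ from (\ref{eq: CM2}) for the final inequality.

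To prove (\ref{eq: Kac}), fix $t\ge1$ and run the chain with $X_0\sim\pi$. Since $\pi_B$ is supported on $B$,
$$\pi(B)\,\Pr_{\pi_B}[T_A=t]=\Pr_\pi[X_0\in B,\ X_1,\dots,X_{t-1}\in B,\ X_t\in A].$$
By reversibility, $(X_0,\dots,X_t)$ and $(X_t,\dots,X_0)$ have the same law under $\Pr_\pi$, and reversing a length-$t$ path sends the event above exactly to $\{X_0\in A,\ X_1,\dots,X_t\in B\}$; hence the right-hand side equals $\Pr_\pi[X_0\in A,\ X_1,\dots,X_t\in B]$. Conditioning on $(X_0,X_1)$, and noting that for $b\in B$ the event $\{X_1,\dots,X_t\in B\}$ on $\{X_1=b\}$ is precisely $\{T_A\ge t\}$ for the chain started at $b$, this becomes $\sum_{a\in A,\,b\in B}\pi(a)P(a,b)\,\Pr_b[T_A\ge t]$. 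From the definitions of $\psi_B$ and $\Phi(A)$ (together with $\Pr_{\pi_A}[X_1\in B]=\Phi(A)$) one has $\sum_{a\in A}\pi(a)P(a,b)=\pi(A)\Phi(A)\,\psi_B(b)$ for every $b\in B$, so the sum equals $\pi(A)\Phi(A)\,\Pr_{\psi_B}[T_A\ge t]$. Finally (\ref{eq: Q(A,A^c)=Q(A^c,A)}) gives $\pi(A)\Phi(A)=\pi(B)\Phi(B)$; dividing by $\pi(B)$ yields (\ref{eq: Kac}).

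To deduce (\ref{eq: kac1}), sum (\ref{eq: Kac}) over $t\ge1$; since $T_A\ge1$ under $\psi_B$,
$$\mathbb{E}_{\psi_B}[T_A]=\sum_{t\ge1}\Pr_{\psi_B}[T_A\ge t]=\Phi(B)^{-1}\sum_{t\ge1}\Pr_{\pi_B}[T_A=t]=\Phi(B)^{-1}.$$
For the second moment, the elementary identity $T_A^2=\sum_{t\ge1}(2t-1)1_{\{T_A\ge t\}}$ together with (\ref{eq: Kac}) gives $\Phi(B)\,\mathbb{E}_{\psi_B}[T_A^2]=\sum_{t\ge1}(2t-1)\Pr_{\pi_B}[T_A=t]=2\mathbb{E}_{\pi_B}[T_A]-1$, i.e.\ $\mathbb{E}_{\psi_B}[T_A^2]=\mathbb{E}_{\psi_B}[T_A]\bigl(2\mathbb{E}_{\pi_B}[T_A]-1\bigr)$, which is the claimed equality. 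Since $T_A=0$ on $A$ we have $\mathbb{E}_\pi[T_A]=\pi(B)\,\mathbb{E}_{\pi_B}[T_A]$, so (\ref{eq: CM2}) gives $\mathbb{E}_{\pi_B}[T_A]\le t_{\mathrm{rel}}/\pi(A)$, whence $\mathbb{E}_{\psi_B}[T_A^2]\le 2\,\mathbb{E}_{\psi_B}[T_A]\,\mathbb{E}_{\pi_B}[T_A]\le 2\,\mathbb{E}_{\psi_B}[T_A]\,t_{\mathrm{rel}}/\pi(A)$.

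The computation is short and there is no serious obstacle; the one step to handle with care is the reversal, where one must verify that reversing a path of length $t$ carries $\{X_0\in B,X_1,\dots,X_{t-1}\in B,X_t\in A\}$ to $\{X_0\in A,X_1,\dots,X_t\in B\}$, and one must keep $t\ge1$ throughout so that the hitting times started from $\pi_B$ and from $\psi_B$ are $\ge1$ (so, e.g., $\Pr_{\psi_B}[T_A\ge1]=1$ and all sums run over $t\ge1$). Reversibility is used only for (\ref{eq: Kac}); the flux identity (\ref{eq: Q(A,A^c)=Q(A^c,A)}) holds without it.
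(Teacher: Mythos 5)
Your proof is correct, and your deduction of (\ref{eq: kac1}) from (\ref{eq: Kac}) — summing over $t$, the identity $T_A^2=\sum_{t\ge 1}(2t-1)1_{\{T_A\ge t\}}$, and the bound $\mathbb{E}_{\pi_B}[T_A]\le t_{\mathrm{rel}}/\pi(A)$ obtained from the second inequality in (\ref{eq: CM2}) — is exactly the paper's. Where you genuinely diverge is in the proof of (\ref{eq: Kac}) itself: you reverse the length-$t$ path under $\Pr_\pi$, using reversibility to carry $\{X_0\in B,\,X_1,\dots,X_{t-1}\in B,\,X_t\in A\}$ to $\{X_0\in A,\,X_1,\dots,X_t\in B\}$, and then condition on $(X_0,X_1)$. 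The paper arrives at the same expression $\Pr_\pi[X_0\in A,\,X_1\notin A,\dots,X_t\notin A]$ using only stationarity: it shifts time by one to write $\Pr_\pi[T_A=t]=\Pr_\pi[T_A^{+}=t+1]$ and then telescopes, so that (\ref{eq: Kac}) — and with it the Kac-type formula $\mathbb{E}_{\psi_B}[T_A]=1/\Phi(B)$ and the exact second-moment identity — hold for any irreducible chain, reversibility entering only through the final inequality via (\ref{eq: CM2}) (the paper notes this explicitly in its proof). So your closing remark that reversibility is used for (\ref{eq: Kac}) is accurate for your argument but is not forced by the statement. Since the proposition assumes reversibility anyway, this costs nothing here; your path-reversal argument is arguably more transparent probabilistically, while the paper's shift-and-telescope argument is the one that survives in the non-reversible setting (where one would otherwise have to reverse into the dual chain).
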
   
\begin{proof}
We first note that the inequality $2\mathbb{E}_{\psi_{B}}[T_{A}]\mathbb{E}_{\pi_{B}}[T_{A}]\le
\frac{2\mathbb{E}_{\psi_{B}}[T_{A}]t_{\mathrm{rel}}
}{\pi(A)}$ follows from the second inequality in (\ref{eq: CM2}) (this is the only part of the proposition which relies upon reversibility). 

Summing (\ref{eq: Kac}) over $t$ yields the first equation in (\ref{eq: kac1}). Multiplying both
sides of (\ref{eq: Kac}) by $2t-1$ and summing over $t$ yields the second equation in (\ref{eq: kac1}). We now prove (\ref{eq: Kac}). Let $t \ge 1$. Then
\begin{equation*}
\begin{split}
&\pi(B)\Pr_{\pi_{B}}[T_{A}=t]=\Pr_{\pi}[T_{A}=t]=\Pr_{\pi}[T_{A}^{+}=t+1]=\Pr_{\pi}[X_{1}\notin
A,\ldots,X_t \notin A,X_{t+1}\in A ] \\ &= \Pr_{\pi}[X_{1}\notin
A,\ldots,X_t \notin A ] - \Pr_{\pi}[X_{1}\notin
A,\ldots,X_t \notin A,X_{t+1}\notin A ] \\ &=\Pr_{\pi}[X_{1}\notin
A,\ldots,X_t \notin A ] - \Pr_{\pi}[X_{0}\notin
A,\ldots,X_t \notin A ]=\Pr_{\pi}[X_0 \in A, X_{1}\notin
A,\ldots,X_t \notin A ] \\ &= \pi(A)\Phi(A)\Pr_{\psi_{B}}[X_{0}\notin
A,\ldots,X_{t-1} \notin A]=\pi(A)\Phi(A)\Pr_{\psi_{B}}[T_{A} \ge t],
\end{split}
\end{equation*}
which by (\ref{eq: Q(A,A^c)=Q(A^c,A)}) implies (\ref{eq: Kac}).
\end{proof}
\section{Refining the bound for trees}
The purpose of this section is to improve the concentration estimate (\ref{eq: VarofTybeta2}). As a motivating example, consider a lazy nearest neighbor random walk on a path of length $n$ with some fixed bias to the right. For concreteness, say, $\Omega_{n}:=\{1,2,\ldots,n\}$, $P_{n}(i,i)=1/2$, $P_{n}(i,i-1)=1/8$ and $P_{n}(i,i+1)=3/8$ for all $1<i<n$. Then $t_{\mathrm{mix}}^{(n)} =4n(1 + o(1) )$ and $t_{\mathrm{rel}}^{(n)}=\Theta(1)$.

In this case, there exists some constant $c_{1}>0$ such that for any $\lambda>0$
we have that $\Pr_{1}[|T_{n}-4n| \ge \lambda \sqrt{n} ] \le 2 e^{-c_{1}\lambda^2}$.
Observe that $\sqrt{t_{\mathrm{mix}}^{(n)}t_{\mathrm{rel}}^{(n)}} =\Theta
(\sqrt{n})$. Hence there exists some constant $c_2$ such that $\Pr_{1}\left[|T_{n}-4n|
\ge \lambda \sqrt{t_{\mathrm{mix}}^{(n)}t_{\mathrm{rel}}^{(n)}}\right] \le
2 e^{-c_{2}\lambda^2}$. Using Proposition \ref{prop: TVbound0}, it is not
hard to show that this implies that $t_{\mathrm{mix}}^{(n)}(\epsilon) \le
t_{\mathrm{mix}}^{(n)}+ c_3 \sqrt{t_{\mathrm{mix}}^{(n)}t_{\mathrm{rel}}^{(n)}|\log
\epsilon |} $ and that $t_{\mathrm{mix}}^{(n)}(1-\epsilon) \ge t_{\mathrm{mix}}^{(n)}-
c_3 \sqrt{t_{\mathrm{mix}}^{(n)}t_{\mathrm{rel}}^{(n)}|\log \epsilon
|}
$. It is also not hard to verify that in this case (and also in many other
examples of birth and death chains) this is  sharp.

In Lemma \ref{lem: gaussiantails} we show that for any lazy Markov chain
on a tree $T=(V,E,o)$ and any $x \in V$, we have that $\Pr_{x}[|T_{o}-\mathbb{E}_{x}[T_{o}]|
\ge \lambda \sqrt{\mathbb{E}_{x}[T_{o}]t_{\mathrm{rel}}} ] \le 2e^{-c_4 \lambda^2}$.
Besides  being of independent interest, using Proposition \ref{prop: TVbound0},
one can deduce from Lemma \ref{lem: gaussiantails} that under the product
condition,
\begin{equation}
\label{eq: subgausiantrees}
\frac{ t_{\mathrm{mix}}^{(n)}(\epsilon)-t^{(n)}_{\mathrm{mix}}(1-\epsilon)
}{\sqrt{t_{\mathrm{mix}}^{(n)}t_{\mathrm{rel}}^{(n)}|\log \epsilon
|}} = O(1),  \text{ for any }0<\epsilon \le 1/4 .
 \end{equation}
 The details of the derivation of (\ref{eq: subgausiantrees}) from Lemma \ref{lem: gaussiantails} are left to the reader.
\label{sec: refined}
\begin{proposition}
\label{prop: Kac2}
Let $(\Omega,P,\pi)$ be a finite irreducible reversible Markov chain. Let $0<\epsilon < 1$. Let
$A \varsubsetneq\ \Omega$ be such that $\pi(A) \ge 1-\epsilon$. Denote the complement of $A$ by $B$. Denote $p:=1-\frac{1-\epsilon}{t_{\mathrm{rel}}}$ and $a:=\mathbb{E}_{\psi_{B}}[T_{A}]$. 
Let $z>1$ be such that $2p(z-1) \le 1-p$. Then
\begin{equation}
\label{eq: Kac4}
\max( \mathbb{E}_{\psi_{B}}[z^{T_{A}-\mathbb{E}_{\psi_{B}}[T_{A}]}], \mathbb{E}_{\psi_{B}}[z^{\mathbb{E}_{\psi_{B}}[T_{A}]-T_{A}}]) \le \exp
\left[   \frac{ 2a(z-1)^{2}}{1-p} \right].
\end{equation}
\end{proposition}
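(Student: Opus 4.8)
The plan is to leverage the identity from Proposition \ref{prop: Kac}, namely $\Pr_{\pi_B}[T_A = t] = \Phi(B)\,\Pr_{\psi_B}[T_A \ge t]$, which couples the tail of $T_A$ under $\psi_B$ to the (exponentially decaying, by Lemma \ref{lem: CMlem}/Lemma \ref{lem: AF1}) distribution of $T_A$ under $\pi_B$. Summing $z^{t}$ against both sides will express the moment generating function $\mathbb{E}_{\psi_B}[z^{T_A}]$ in terms of $\mathbb{E}_{\pi_B}[z^{T_A}]$. Concretely, using $\Pr_{\psi_B}[T_A \ge t] = \sum_{s \ge t}\Pr_{\psi_B}[T_A = s]$ and Abel summation (or interchanging the order of summation in $\sum_{t\ge 1} z^{t}\Pr_{\pi_B}[T_A=t]/\Phi(B) = \sum_{t \ge 1} z^t \Pr_{\psi_B}[T_A \ge t]$), one obtains a closed-form relation of the shape $\mathbb{E}_{\psi_B}[z^{T_A}] = \frac{(z-1)\,\mathbb{E}_{\pi_B}[z^{T_A}] + \text{(boundary term)}}{\text{something}}$, with the key input being that $\mathbb{E}_{\pi_B}[z^{T_A}] < \infty$ precisely when $z$ is below the reciprocal of the top eigenvalue $\gamma_1 \le p = 1-\frac{\pi(A)}{t_{\mathrm{rel}}} \le 1-\frac{1-\epsilon}{t_{\mathrm{rel}}}$ of $P_B$.

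The main computational step is then to bound $\mathbb{E}_{\pi_B}[z^{T_A}]$. By Lemma \ref{lem: CMlem}(iii) (and its analogue aggregated over connected components as in the proof of Lemma \ref{lem: AF1}), $\Pr_{\pi_B}[T_A \ge t] \le p^{t}$, so $\mathbb{E}_{\pi_B}[z^{T_A}] \le \sum_{t \ge 0}(z-1)z^{t-1}p^{t} \le \frac{1}{1 - pz}$ type estimate, valid under $pz < 1$, i.e. $z < 1/p$. The hypothesis $2p(z-1) \le 1-p$, i.e. $z \le \frac{1+p}{2p} < \frac 1p$, is exactly what keeps us safely inside this radius and, more importantly, gives quantitative control: one gets $\mathbb{E}_{\psi_B}[z^{T_A}] \le \frac{1}{1 - \frac{2p(z-1)}{1-p}} \cdot (\text{const})$ or similar, where $\frac{2p(z-1)}{1-p} \le 1$. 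Combining with $a = \mathbb{E}_{\psi_B}[T_A] = 1/\Phi(B)$ from (\ref{eq: kac1}) and using $1-p = \frac{\pi(A)}{t_{\mathrm{rel}}} \le \frac{1}{a\Phi(B)}\cdot(\dots)$ — more precisely that $a$ and $1-p$ are linked through $\Phi(B)$ and the comparison $\Phi(B)\,a = 1$ together with $\pi(A)\mathbb{E}_{\pi_B}[T_A] \le t_{\mathrm{rel}}\pi(A^c)$ — one extracts the clean exponential form $\exp\!\left[\frac{2a(z-1)^2}{1-p}\right]$. Writing $z = 1+\eta$ with $\eta$ small, the bound $\log \mathbb{E}_{\psi_B}[z^{T_A}] - a\log z$ should be $O(a\eta^2/(1-p))$ by a second-order Taylor expansion, the first-order terms cancelling because we have centered at $a = \mathbb{E}_{\psi_B}[T_A]$.

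For the lower-tail bound $\mathbb{E}_{\psi_B}[z^{a - T_A}] = z^a\,\mathbb{E}_{\psi_B}[z^{-T_A}]$, since $z^{-T_A} \le 1$ always and $z^{-T_A}$ is a bounded decreasing function of $T_A$, the analysis is actually easier: $\mathbb{E}_{\psi_B}[z^{-T_A}] \le \exp\!\left[(z^{-1}-1)\mathbb{E}_{\psi_B}[T_A] + \tfrac12(z^{-1}-1)^2\mathbb{E}_{\psi_B}[T_A(T_A-1)]\cdot(\dots)\right]$ via a direct convexity/Taylor argument using the second-moment bound $\mathbb{E}_{\psi_B}[T_A^2] \le \frac{2a\,t_{\mathrm{rel}}}{\pi(A)} = \frac{2a}{1-p}$ from (\ref{eq: kac1}); here one does not even need the MGF finiteness subtlety. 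I expect the main obstacle to be bookkeeping: tracking the constant $2$ and ensuring the boundary term from Abel summation is genuinely negligible (it should vanish or be absorbed because $z^t \Pr_{\pi_B}[T_A \ge t] \to 0$ under $zp<1$), and verifying that the hypothesis $2p(z-1)\le 1-p$ is used tightly enough to land exactly on the stated exponent rather than a looser constant. I would handle the upper tail by the summation-by-parts route and the lower tail by the elementary bounded-variable route, then combine.
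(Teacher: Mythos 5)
Your plan is correct, and for the upper tail it is essentially the paper's own argument: the paper also starts from (\ref{eq: Kac}) and $a=1/\Phi(B)$ (from (\ref{eq: kac1})), writes $\mathbb{E}_{\psi_{B}}[z^{T_{A}}]=1+(z-1)\sum_{k\ge 1}z^{k-1}\Pr_{\psi_{B}}[T_{A}\ge k]=1+(z-1)a\sum_{k\ge 1}z^{k-1}\Pr_{\pi_{B}}[T_{A}=k]$, dominates the $\pi_{B}$-law of $T_A$ by a Geometric$(1-p)$ variable using (\ref{eq: CM1}), sums the series to get $1+(z-1)a\,\frac{1-p}{1-pz}$, uses the hypothesis $2p(z-1)\le 1-p$ exactly where you say (to bound $\frac{p(z-1)}{1-pz}\le \frac{2p(z-1)}{1-p}$), and then centers with $z^{-a}\le \exp[-a(z-1)+a(z-1)^2]$; the leftover $a(z-1)^2$ is absorbed because $1+\frac{2p}{1-p}=\frac{1+p}{1-p}\le\frac{2}{1-p}$, so the bookkeeping you deferred does land on the stated constant $2$. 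Where you genuinely diverge is the lower tail: the paper repeats the generating-function computation for $\mathbb{E}_{\psi_{B}}[z^{-T_{A}}]$ (display (\ref{eq: Kac7})), whereas you invoke the second-moment bound $\mathbb{E}_{\psi_{B}}[T_{A}^{2}]\le \frac{2a\,t_{\mathrm{rel}}}{\pi(A)}\le\frac{2a}{1-p}$ from (\ref{eq: kac1}) and a second-order Taylor bound; this is a legitimate and arguably cleaner route, and it even yields a slightly better constant, e.g.\ writing $z^{-T_A}=e^{-T_A\log z}$ and using $e^{-x}\le 1-x+\tfrac{x^{2}}{2}$ for $x\ge 0$ gives $z^{a}\mathbb{E}_{\psi_{B}}[z^{-T_{A}}]\le \exp\bigl[\tfrac{a(\log z)^{2}}{1-p}\bigr]\le \exp\bigl[\tfrac{a(z-1)^{2}}{1-p}\bigr]$, with no MGF-radius issue at all. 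One small caution: justify the quadratic truncation via the exponential inequality above rather than truncating the binomial expansion of $(1+(z^{-1}-1))^{T_A}$ at second order, since for negative increments that truncation is not automatically an upper bound without checking monotonicity of the terms; with that fix your argument is complete and matches (indeed slightly improves) the stated bound.
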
  
\begin{proof}
   By (\ref{eq: Kac}) and (\ref{eq: CM1})
\begin{equation}
\label{eq: Kac5}
\begin{split}
& \mathbb{E}_{\psi_{B}}[z^{T_{A}}]=\sum_{k \ge 1}z^k \Pr_{\psi_{B}}[T_{A}=k]=1+(z-1)\sum_{k \ge 1}z^{k-1} \Pr_{\psi_{B}}[T_{A}\ge k] \\ & =1+(z-1)a\sum_{k
\ge 1}z^{k-1} \Pr_{\pi_{B}}[T_{A}= k] \leq 1+(z-1)a\sum_{k
\ge 1}(1-p)(pz)^{k-1} \\ & = 1+\frac{(z-1)a(1-p)}{1-pz}=1+(z-1)a \left(1+\frac{p(z-1)}{1-pz} \right) =1+(z-1)a
\left(1+\frac{ \frac{p(z-1)}{1-p}}{1-\frac{p(z-1)}{1-p}} \right) \\ & \le 1+(z-1)a
\left(1+\frac{2p(z-1)}{1-p} \right) \le \exp[a(z-1)+\frac{2ap(z-1)^{2}}{1-p}] , 
\end{split}
\end{equation}
where in the penultimate inequality we have used the assumption that $2p(z-1) \le 1-p $. We also have that
\begin{equation}
\label{eq: Kac6}
\begin{split}
& z^{-\mathbb{E}_{\psi_{B}}[T_{A}]
} \le \left(1-(z-1)+(z-1)^2\right)^{a} \le \exp[-a(z-1)+a(z-1)^{2}].
\end{split}
\end{equation}
Thus $\mathbb{E}_{\psi_{B}}[z^{T_{A}-\mathbb{E}_{\psi_{B}}[T_{A}]}] \le \exp \left[  a(z-1)^{2} \left(1+\frac{2p}{1-p}\right) \right] \le \exp
\left[   \frac{ 2a(z-1)^{2}}{1-p} \right].   $

Similarly,
\begin{equation}
\label{eq: Kac7}
\begin{split}
& \mathbb{E}_{\psi_{B}}[z^{-T_{A}}]=\sum_{k \ge 1}z^{-k} \Pr_{\psi_{B}}[T_{A}=k]=1-(1-z^{-1}) \sum_{k
\ge 1}z^{-(k-1)} \Pr_{\psi_{B}}[T_{A}\ge k] \\ & =1-(1-z^{-1})a\sum_{k
\ge 1}z^{-(k-1)} \Pr_{\pi_{B}}[T_{A}= k]=1-(1-z^{-1})a\sum_{k
\ge 1}(1-p)(p/z)^{k-1} \\ & = 1-\frac{(1-z^{-1})a(1-p)}{1-p/z}=1-(1-z^{-1})a \left(1-\frac{p(1-z^{-1})}{1-p/z}
\right) \\ & =1-(1-z^{-1})a
\left(1-\frac{ \frac{p(1-z^{-1})}{1-p}}{1-\frac{p(1-z^{-1})}{1-p}} \right)  \le1-(1-z^{-1})a
\left(1-\frac{2p(1-z^{-1})}{1-p} \right) \\ & \le \exp \left[-a(1-z^{-1})+\frac{2ap(z-1)^{2}}{1-p}\right].
\end{split}
\end{equation}
We also have that $z^{\mathbb{E}_{\psi_{B}}[T_{A}]
} \le \left(1+(z-1)\right)^{a} \le e^{a(z-1)}$. Note that $a(z-1)-a(1-z^{-1})=a(z-1)^{2}/z \le a(z-1)^2$. Hence  $\mathbb{E}_{\psi_{B}}[z^{\mathbb{E}_{\psi_{B}}[T_{A}]-T_{A}}] \le \exp
\left[  a(z-1)^{2} \left(1+\frac{2p}{1-p}\right) \right] \le \exp
\left[   \frac{ 2a(z-1)^{2}}{1-p} \right]$.
\end{proof}
\begin{lemma}
\label{lem: gaussiantails}
Let $(V,P,\pi)$ be a Markov chain on a tree $(T,o)$. Let $x,y \in V$ be such that $y \prec x$. Denote
$t_{x,y}:=\mathbb{E}_x[T_{y}] $ and  $b=b_{x,y}:=\sqrt{t_{x,y}t_{\mathrm{rel}}}$. Then
\begin{equation}
\label{eq: gaussian1}
\Pr_x[T_{y}-t_{x,y} \ge cb ] \vee \Pr_x[t_{x,y} -T_{y} \ge cb ] \le e^{-c^2/20}, \text{ for any } 0 \le c \le \frac{5}{2} \sqrt{t_{x,y}/t_{\mathrm{rel}}}.
\end{equation}
\end{lemma}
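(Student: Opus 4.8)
The plan is a Chernoff argument resting on the fact that, under $\Pr_x$, the hitting time $T_y$ decomposes as a sum of \emph{independent} single-edge crossing times, each of which has a sub-exponential moment generating function that is precisely what Proposition~\ref{prop: Kac2} controls. I would begin with the decomposition already used in Corollary~\ref{cor: concentrationfortrees}: writing $(v_0=x,v_1,\dots,v_k=y)$ for the path from $x$ to $y$ (so $v_i=f_{v_{i-1}}$, and $v_0,\dots,v_{k-1}\neq o$ since this path is an initial segment of the path from $x$ to the root), we have $T_y=\sum_{i=1}^k\tau_i$ under $\Pr_x$ with $\tau_i:=T_{v_i}-T_{v_{i-1}}$ independent, $\tau_i$ distributed as $T_{f_{v_{i-1}}}$ under $\Pr_{v_{i-1}}$, and, by Lemma~\ref{lem: tree1}, $t_{x,y}=\sum_{i=1}^k t_{v_{i-1}}$ with $t_u:=\mathbb{E}_u[T_{f_u}]$.

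The crucial point is that each crossing time is literally an instance of Proposition~\ref{prop: Kac2}. Fix $u\neq o$ on the path and put $A:=W_u^c$, $B:=W_u$. Since $o$ is a central vertex and $W_u$ is contained in the component of $T\setminus\{o\}$ containing $u$, we have $\pi(W_u)\le\tfrac12$, i.e. $\pi(A)\ge\tfrac12$. On a tree the only edge joining $B$ to $A$ is $\{u,f_u\}$, so the re-entry distribution $\psi_B$ is the point mass $\delta_u$; hence $T_A=T_{f_u}$ under $\Pr_{\psi_B}=\Pr_u$ and $a:=\mathbb{E}_{\psi_B}[T_A]=t_u$. Applying Proposition~\ref{prop: Kac2} with $\epsilon:=\pi(W_u)$ --- so $p:=p_u=1-\pi(W_u^c)/t_{\mathrm{rel}}$ and $1-p_u=\pi(W_u^c)/t_{\mathrm{rel}}\ge 1/(2t_{\mathrm{rel}})$ --- and using the intermediate bound $\exp[a(z-1)^2(1+\tfrac{2p}{1-p})]=\exp[a(z-1)^2\tfrac{1+p}{1-p}]$ from its proof, I get for every $z>1$ with $2p_u(z-1)\le 1-p_u$
$$\max\bigl(\mathbb{E}_u[z^{T_{f_u}-t_u}],\ \mathbb{E}_u[z^{t_u-T_{f_u}}]\bigr)\ \le\ \exp\!\bigl[(4t_{\mathrm{rel}}-1)\,t_u\,(z-1)^2\bigr],$$
the last inequality using $\pi(W_u^c)\ge\tfrac12$. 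Since $1-p_u\ge 1/(2t_{\mathrm{rel}})$ and $p_u<1$, the constraint $2p_u(z-1)\le 1-p_u$ holds for \emph{all} $u$ on the path once $z-1\le 1/(4t_{\mathrm{rel}})$.

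Multiplying over $i=1,\dots,k$, using independence and $\sum_i t_{v_{i-1}}=t_{x,y}$,
$$\mathbb{E}_x[z^{T_y-t_{x,y}}]\ =\ \prod_{i=1}^k\mathbb{E}_{v_{i-1}}[z^{\tau_i-t_{v_{i-1}}}]\ \le\ \exp\!\bigl[(4t_{\mathrm{rel}}-1)\,t_{x,y}\,(z-1)^2\bigr],\qquad 1<z\le 1+\tfrac1{4t_{\mathrm{rel}}},$$
and the same for $\mathbb{E}_x[z^{t_{x,y}-T_y}]$. With $b=\sqrt{t_{x,y}t_{\mathrm{rel}}}$, Markov's inequality gives $\Pr_x[T_y-t_{x,y}\ge cb]\le z^{-cb}\,\mathbb{E}_x[z^{T_y-t_{x,y}}]$ and the analogous lower-tail bound. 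I would then take $z=1+c/(10b)$, which lies in the admissible range $z-1\le 1/(4t_{\mathrm{rel}})$ exactly when $c\le\tfrac52\sqrt{t_{x,y}/t_{\mathrm{rel}}}$ --- precisely the restriction on $c$ in the statement. Substituting, using $\log(1+\theta)\ge\theta-\theta^2/2$, $t_{\mathrm{rel}}\ge 1$, and $c\le\tfrac52 t_{x,y}/b$, the exponent collapses to at most $-\tfrac{23}{400}c^2\le -\tfrac{c^2}{20}$ (the case $c=0$ being trivial), which is the claim.

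The content is entirely in the reduction; afterwards it is bookkeeping. The two places that need care are: (i) the identification $\psi_{W_u}=\delta_u$, which is what lets Proposition~\ref{prop: Kac2} apply verbatim to a single crossing time; and (ii) the constant chase in the Chernoff step, where one must check that the optimal $z$ respects $2p_u(z-1)\le 1-p_u$ uniformly over the path and over the entire range $0\le c\le\tfrac52\sqrt{t_{x,y}/t_{\mathrm{rel}}}$ --- this is what pins down the constants $\tfrac52$ and $\tfrac1{20}$, and I expect it to be the only real friction.
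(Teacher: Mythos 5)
Your proposal is correct and follows essentially the same route as the paper's proof: the same decomposition of $T_y$ into independent edge-crossing times, the same appeal to Proposition \ref{prop: Kac2} for each crossing (with the identification $\psi_{W_u}=\delta_u$, which the paper leaves implicit, and $\pi(W_u^c)\ge 1/2$ from centrality of $o$), and the same Chernoff step with $z=1+\frac{c}{10b}$ under the constraint $2p(z-1)\le 1-p$. Your constant chase (keeping the $(4t_{\mathrm{rel}}-1)$ factor and using $c\le \frac{5}{2}t_{x,y}/b$ to absorb the cubic term) is if anything slightly more careful than the paper's final bookkeeping, and lands within the claimed $e^{-c^2/20}$.
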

\begin{proof}
Let $(v_0=x,v_1,\ldots,v_{k}=y)$ be the path from $x$ to $y$.
Define $\tau_i:=T_{v_{i}}-T_{v_{i-1}}$. Then by the tree structure, under
$\Pr_x$, we have that $T_{y}=\sum_{i=1}^{k}\tau_i$ and that $\tau_1,\ldots,\tau_k$
are independent. Denote $p:=1-\frac{1}{2t_{\mathrm{rel}}}$.  Denote $a_i:=\mathbb{E}_{x}[\tau_i] $. Fix some $0 \le c \le
\frac{5}{2} \sqrt{t_{x,y}/t_{\mathrm{rel}}}$. 
Set $z_{c}=z_{c,x}:=1+\frac{c}{10b}$. Note that $2p(z_{c}-1) \le \frac{c}{5b} \le \frac{1}{2t_{\mathrm{rel}}} =1-p$.  Then by (\ref{eq: Kac4})
\begin{equation}
\label{eq: strongconcentration1}
\begin{split}
& \Pr_x[T_{y}-t_{x,y} \ge cb ]=\Pr_x[z_{c}^{T_{y}-t_{x,y} } \ge z_{c}^{cb} ] \le \mathbb{E}_x[z_{c}^{T_{y}-t_{x,y}} ]z_{c}^{-cb}=z_{c}^{-cb} \prod_{i=1}^k  \mathbb{E}_x[z_{c}^{\tau_i-a_i
} ] \\ &  \le \exp [(-(z_{c}-1)+(z_{c}-1)^{2})cb]\prod_{i=1}^k  \exp
\left[   \frac{ 2a_{i}(z_{c} -1)^{2}}{1-p} \right] \\ & = \exp \left[-\frac{c^{2}}{10}+\frac{c^3}{100b} \right]\exp \left[\frac{2t_{\mathrm{rel}}t_{x}c^2}{50b^2}\right] \le \exp \left[-\frac{c^{2}}{10}+\frac{c^3}{100b} + \frac{c^2}{25} \right] \le e^{-c^2/20}.
\end{split}
\end{equation}
The inequality $\Pr_x[t_{x,y}
-T_{y} \ge cb ] \le e^{-c^2/20}$ is proved in an analogous manner.
\end{proof}
\section{Weighted random walks on the interval with bounded jumps}
In this section we prove Theorem \ref{thm: semibd} and establish that product condition is sufficient for cutoff for a sequence of $(\delta,r)$-SBD chains. Although we think of $\delta$ as being
bounded away from 0, and of $r$ as a constant integer,
it will be clear that our analysis remains valid as long as $\delta$ does not tend to 0, nor does $r$  to infinity, too rapidly  in terms of some functions of $t_{\mathrm{rel}}/t_{\mathrm{mix}}$.

Throughout the section, we use $C_1,C_2,\ldots$ to describe positive
constants which depend only on $\delta$ and $r$. Consider a $(\delta,r)$-SBD chain on $([n],P,\pi)$. We call a state $i \in [n]$ a central-vertex if $\pi([i-1]) \vee \pi([n]\setminus [i])\le 1/2$. As opposed to the setting of Section \ref{sec: trees}, the sets $[i-1]$ and $[n]
\setminus [i]$ need not be connected components of $[n] \setminus \{i\}$ w.r.t.~the chain, in the sense that it might be possible for the chain to get from $[i-1]$ to $[n] \setminus [i]$ without first hitting $i$ (skipping over $i$).  We pick a central-vertex $o$ and call it the root. 


Divide $[n]$ into $m:=\lceil n/r \rceil$ consecutive disjoint intervals, $I_1,\ldots,I_m$ each of size $r$, apart from perhaps $I_m$. We call each such interval a \emph{block}. Denote by $I_{\tilde{o}}$ the unique block such that the root $o$ belongs to it. Since we are assuming the product condition, in the setup of Theorem \ref{thm: semibd} we can assume without loss of generality that $I_{\tilde{o}}\neq [n]$. Observe the following. Suppose $v\notin I_{\tilde{o}}$ is a neighbour of $I_{\tilde{o}}$ in $[n]$. Then by reversibility and the definition of a $(\delta, r)$ chain, we have for all $v'\in I_{\tilde{o}}$, $\pi(v)\geq \delta^r \pi(v')$. Hence $\pi(I_{\tilde{o}})\leq \frac{r}{r+\delta^r}$. For the rest of this section let us fix $\alpha=\alpha(\delta,r)=1-\frac{\delta^r}{4(r+\delta^r)}$.  


Recall that in Section \ref{sec: trees} we exploited the tree structure to reduce the problem of showing cutoff to showing the concentration of the hitting time of the central vertex by showing that starting from the central vertex the chain hits any large set quickly. We argue similarly in this case with central vertex replaced by the central block. First we need the following lemma.
\begin{lemma}
\label{lem: bdlem1'}
In the above setup, let $I:=\{v,v+1,\ldots,v+r-1\} \subset [n] $. Let $\mu \in \mathscr{P} (I)$. Then
\begin{equation}
\label{eq: 7.1.1}
\mathbb{E}_{\mu}[T_{A}] \le 
\max_{y \in I} \mathbb{E}_{y}[T_A] \le \delta^{-r} \min_{ x \in I }\mathbb{E}_{x}[T_{A}], \text{ for any }A \subset \Omega \setminus I.
\end{equation}
Consequently, for any $i \in I$ and $A \subset [v-1]$ (resp.~$A \subset [n] \setminus [v+r-1]$) we have that
\begin{equation}
\label{eq: 7.1.2}
\mathbb{E}_{i}[T_A] \le \delta^{-r}\mathbb{E}_{\pi_{[n]\setminus[v-1]}}[T_A], \text{ (resp.~}\mathbb{E}_i[T_A]\le \delta^{-r}\mathbb{E}_{\pi_{[v+r-1]}}[T_A]).
\end{equation}
\end{lemma}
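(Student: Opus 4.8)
\emph{Proof plan.} The first inequality in (\ref{eq: 7.1.1}) is immediate, since $\mathbb{E}_{\mu}[T_A]=\sum_{y\in I}\mu(y)\mathbb{E}_{y}[T_A]\le\max_{y\in I}\mathbb{E}_{y}[T_A]$, so the content is the second inequality. The plan there is a one-step-at-a-time path argument inside $I$. Fix $x,y\in I$ and $A\subset\Omega\setminus I$, and consider the deterministic monotone (increasing or decreasing) path from $x$ to $y$; it has length $k:=|x-y|\le r-1$ and stays inside $I$, so by condition (ii) of Definition \ref{def:sbd} the event $E:=\{X_{j}\text{ traverses this path for }0\le j\le k\}$ satisfies $\Pr_{x}[E]\ge\delta^{k}\ge\delta^{r-1}\ge\delta^{r}$. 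On $E$ we have $X_0,\dots,X_k\in I$, hence (as $A\cap I=\emptyset$) $T_A>k$ and $T_A=k+T_A\circ\theta_k$, while $X_k=y$; the Markov property then gives $\mathbb{E}_{x}[T_A]\ge\mathbb{E}_{x}[(T_A\circ\theta_k)\mathbf 1_E]=\Pr_{x}[E]\,\mathbb{E}_{y}[T_A]\ge\delta^{r}\mathbb{E}_{y}[T_A]$. Choosing $x$ to attain $\min_{z\in I}\mathbb{E}_{z}[T_A]$ and $y$ to attain $\max_{z\in I}\mathbb{E}_{z}[T_A]$ yields the second inequality in (\ref{eq: 7.1.1}).

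For (\ref{eq: 7.1.2}) I would treat the case $A\subset[v-1]$ (the ``resp.'' statement is the mirror image). By (\ref{eq: 7.1.1}), $\mathbb{E}_{i}[T_A]\le\delta^{-r}\min_{z\in I}\mathbb{E}_{z}[T_A]$, so it suffices to show $\min_{z\in I}\mathbb{E}_{z}[T_A]\le\mathbb{E}_{x}[T_A]$ for every $x\in[n]\setminus[v-1]=\{v,\dots,n\}$, as then averaging over $x\sim\pi_{[n]\setminus[v-1]}$ finishes the proof. The key point is that, started from any $x\ge v$, the chain hits $I$ before it hits $A$: this is trivial if $x\in I$, and if $x\ge v+r$ then at the first time $\sigma$ the chain reaches $\{\le v-1\}$ we have $X_{\sigma-1}\ge v$ and $X_{\sigma}\le v-1$, so condition (i) forces $X_{\sigma-1}\le X_{\sigma}+r\le v+r-1$, i.e.\ $X_{\sigma-1}\in I$; since $A\subseteq[v-1]$ this gives $T_I\le\sigma-1<\sigma\le T_A$. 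Hence $T_A=T_I+T_A\circ\theta_{T_I}$, and the strong Markov property together with $X_{T_I}\in I$ gives $\mathbb{E}_{x}[T_A]=\mathbb{E}_{x}[T_I]+\mathbb{E}_{x}\!\left[\mathbb{E}_{X_{T_I}}[T_A]\right]\ge\min_{z\in I}\mathbb{E}_{z}[T_A]$, as needed. The ``resp.'' version follows identically, using that to leave $[v+r-1]$ upward the chain must first occupy $I$.

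The only genuinely non-formal ingredient is the geometric observation used in the second part: because every jump has length at most $r=|I|$, the chain cannot cross the block $I$ without landing in it, so hitting times of sets lying beyond $I$ are bounded below by hitting times started inside $I$. I do not expect a real obstacle beyond stating this observation cleanly and doing the attendant strong-Markov bookkeeping; note that reversibility plays no role in this lemma.
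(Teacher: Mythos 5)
Your proposal is correct and follows essentially the same route as the paper: the second inequality in (\ref{eq: 7.1.1}) via forcing the chain along the monotone unit-step path from $x$ to $y$ inside $I$ (probability at least $\delta^{|x-y|}\ge\delta^r$ by condition (ii)) plus the Markov property, and (\ref{eq: 7.1.2}) via the observation that condition (i) prevents the chain from jumping over the block $I$, so $T_I\le T_A$ from any starting point on the relevant side, after which one averages over $X_{T_I}$ and applies (\ref{eq: 7.1.1}). No gaps; your write-up just makes the strong-Markov bookkeeping slightly more explicit than the paper does.
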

\begin{proof}
We first note that (\ref{eq: 7.1.2}) follows from (\ref{eq: 7.1.1}). Indeed, by condition
(i) of the definition of a $(\delta,r)$-SBD chain, if  $A \subset [v-1]$ (resp.~$A \subset [n]
\setminus [v+r-1]$), then under $\Pr_{\pi_{[n]\setminus [v-1]}}$ (resp.~under $\Pr_{\pi_{[v+r-1]}}$), $T_I \le T_A $. Thus  (\ref{eq: 7.1.2}) follows from (\ref{eq: 7.1.1}) by averaging over $X_{T_{I}}$. We now prove (\ref{eq: 7.1.1}). 

Fix some $A$ such that $A \subset [n] \setminus I $. Fix some distinct $x,y \in I$. Let $B_1$ be the event that $T_y \le T_{A}$. One way in which $B_1$ can occur is that the chain would move from $x$ to $y$ in $|y-x|$ steps such that $|X_k-X_{k-1}|=1$ for all $1 \le k \le |y-x|$. Denote the last event by $B_2$. Then $$\mathbb{E}_x[T_A] \ge \mathbb{E}_x[T_A1_{B_{2}}] \ge
\Pr[B_2] \mathbb{E}_y[T_A] \ge \delta^r \mathbb{E}_y[T_A].$$
Minimizing over $x$ yields that for any $y \in I$ we have that $\mathbb{E}_y[T_A] \le \delta^{-r} \min_{ x \in I }\mathbb{E}_{x}[T_{A}]$, from which (\ref{eq: 7.1.1}) follows easily. 
\end{proof}

The next proposition reduces the question of proving cutoff for a sequence of $(\delta,r)$-SBD chains under the product condition to that of showing an appropriate concentration for the hitting time of central block. The argument is analogous to the one in Section \ref{sec: trees} and hence we only provide a sketch to avoid repititions. As in Section \ref{sec: trees}, for $\epsilon\in (0,1)$ let $\tau_{C}(\epsilon)=\min\{t:\Pr_x[T_{I_{\tilde{o}}}>t]\leq \epsilon \text{ } \forall x\in [n]\}$. 

\begin{proposition}
\label{p:sbdreduction}
In the above set-up, suppose there exists universal constants $C_{\epsilon}$ for $\epsilon\in (0,\frac{1}{8})$ and a constant $w_n$ depending on the chain such that we have
\begin{equation}
\label{e:sbdreduce1}
\tau_C(\epsilon)-\tau_C(1-\epsilon)\leq C_{\epsilon}w_n~\text{for all}~\epsilon\in (0,\frac{1}{8}).
\end{equation}
Then we have for some unversal constants $C'_{\epsilon},C''_{\epsilon}$ and for all $\epsilon\in (0,1/8)$ 
\begin{equation}
\label{e:sbdreduce2}
\hit_{\alpha}(3\epsilon/2)-\hit_{1/2}(1-3\epsilon/2)\leq C_{\epsilon}w_n+ C'_{\epsilon}t_{\rm rel}~\text{and}
\end{equation}
\begin{equation}
\label{e:sbdreduce3}
t_{\rm mix}(2\epsilon)-t_{\rm mix}(1-2\epsilon)\leq C''_{\epsilon}(w_n+t_{\rm rel}).
\end{equation} 
\end{proposition}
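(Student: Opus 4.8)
The plan is to adapt the proof of Theorem \ref{thm: treescutoff} from Section \ref{sec: trees}, with the central block $I_{\tilde{o}}$ playing the role of the central vertex and Lemma \ref{lem: bdlem1'} replacing the explicit tree estimates. First I would establish a two-sided comparison between $\tau_C$ and the quantities $\hit_{1/2}$ and $\hit_{\alpha}$, in the spirit of Lemma \ref{lem: TVboundstrees}, and then feed it into Proposition \ref{prop: TVbound0} and Corollary \ref{prop: hitpqinequalities} to deduce (\ref{e:sbdreduce2}) and (\ref{e:sbdreduce3}).

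For the first half of the comparison, $\tau_C(\eta)\le\hit_{1/2}(\eta)$ for all $\eta\in(0,1)$, I would use the key feature of $(\delta,r)$-SBD chains that a single step cannot jump over an entire block: given any $x\notin I_{\tilde{o}}=\{v,\dots,\max I_{\tilde{o}}\}$, put $A:=\{v,\dots,n\}$ if $x$ lies to the left of $I_{\tilde{o}}$ and $A:=\{1,\dots,\max I_{\tilde{o}}\}$ if $x$ lies to the right; then $\pi(A)\ge 1/2$ because $A$ contains $\{o,\dots,n\}$ (resp.\ $\{1,\dots,o\}$) and $o$ is central, and the first state of $A$ visited by the walk necessarily lies in $I_{\tilde{o}}$, so $T_{I_{\tilde{o}}}\le T_A$ under $\Pr_x$ (trivially so if $x\in I_{\tilde{o}}$). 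Hence $\max_x\Pr_x[T_{I_{\tilde{o}}}>t]\le p(1/2,t)$, which is the claim. The second half is the existence of $C_1=C_1(\delta,r)$ with $\max_{i\in I_{\tilde{o}}}\mathbb{E}_i[T_A]\le C_1 t_{\rm rel}$ whenever $\pi(A)\ge\alpha$. Since $\pi(I_{\tilde{o}})\le \frac{r}{r+\delta^{r}}$ (as observed before the statement) and $\alpha=1-\frac{\delta^{r}}{4(r+\delta^{r})}$, we get $\pi(A\setminus I_{\tilde{o}})\ge \alpha-\pi(I_{\tilde{o}})\ge \frac{3\delta^{r}}{4(r+\delta^{r})}$, so $A$ contains a subset $A'$ lying entirely on one side of $I_{\tilde{o}}$ --- say $A'\subseteq[v-1]$ --- with $\pi(A')\ge c_0:=\frac{3\delta^{r}}{8(r+\delta^{r})}$. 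By (\ref{eq: 7.1.2}) of Lemma \ref{lem: bdlem1'}, applied with $I_{\tilde{o}}$ (or, if $I_{\tilde{o}}$ is a short endpoint block, with an $r$-element subinterval of $[n]$ containing it), $\mathbb{E}_i[T_{A'}]\le\delta^{-r}\mathbb{E}_{\pi_{[n]\setminus[v-1]}}[T_{A'}]\le 2\delta^{-r}\mathbb{E}_\pi[T_{A'}]$ for all $i\in I_{\tilde{o}}$, using $\pi([n]\setminus[v-1])\ge\pi(\{o,\dots,n\})\ge 1/2$; combining this with $\mathbb{E}_\pi[T_{A'}]\le t_{\rm rel}/\pi(A')\le t_{\rm rel}/c_0$ from the second inequality in (\ref{eq: CM2}) and with $T_A\le T_{A'}$ proves the claim.

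To obtain (\ref{e:sbdreduce2}) I would combine the two halves. By the strong Markov property at $T_{I_{\tilde{o}}}$ and Markov's inequality, $\Pr_x[T_A>t+s]\le\Pr_x[T_{I_{\tilde{o}}}>t]+\max_{i\in I_{\tilde{o}}}\Pr_i[T_A>s]\le\Pr_x[T_{I_{\tilde{o}}}>t]+C_1 t_{\rm rel}/s$ for every $x$, every $A$ with $\pi(A)\ge\alpha$, and all $s,t\ge0$; choosing $t=\tau_C(\epsilon)$ and $s=\lceil 2C_1 t_{\rm rel}/\epsilon\rceil$ (and using $t_{\rm rel}\ge1$) gives $\hit_\alpha(3\epsilon/2)\le\tau_C(\epsilon)+C'_\epsilon t_{\rm rel}$ with $C'_\epsilon=C'_\epsilon(\delta,r)$. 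The first half gives $\hit_{1/2}(1-3\epsilon/2)\ge\tau_C(1-3\epsilon/2)\ge\tau_C(1-\epsilon)$, so subtracting and invoking the hypothesis (\ref{e:sbdreduce1}) yields $\hit_\alpha(3\epsilon/2)-\hit_{1/2}(1-3\epsilon/2)\le\tau_C(\epsilon)-\tau_C(1-\epsilon)+C'_\epsilon t_{\rm rel}\le C_\epsilon w_n+C'_\epsilon t_{\rm rel}$, which is (\ref{e:sbdreduce2}). For (\ref{e:sbdreduce3}): by (\ref{eq: introTVbound1}) and (\ref{eq: introTVbound2}) applied with $2\epsilon$ in place of $\epsilon$, $t_{\rm mix}(2\epsilon)-t_{\rm mix}(1-2\epsilon)\le\hit_{1/2}(\epsilon)-\hit_{1/2}(1-\epsilon)+C_2 t_{\rm rel}$ for some $C_2=C_2(\epsilon)$; by Corollary \ref{prop: hitpqinequalities} with the two indices $1/2$ and $\alpha$, splitting $\epsilon$ into $3\epsilon/4$ and $\epsilon/4$, $\hit_{1/2}(\epsilon)\le\hit_\alpha(3\epsilon/4)+C_3 t_{\rm rel}$ with $C_3=C_3(\delta,r,\epsilon)$, while $\hit_{1/2}(1-\epsilon)\ge\hit_{1/2}(1-3\epsilon/4)$ by monotonicity; hence $\hit_{1/2}(\epsilon)-\hit_{1/2}(1-\epsilon)\le[\hit_\alpha(3\epsilon/4)-\hit_{1/2}(1-3\epsilon/4)]+C_3 t_{\rm rel}\le C_{\epsilon/2}w_n+C'_{\epsilon/2}t_{\rm rel}+C_3 t_{\rm rel}$, where the last step is (\ref{e:sbdreduce2}) applied with $\epsilon/2$ (valid since $\epsilon/2<1/8$). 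Collecting the $t_{\rm rel}$ terms gives (\ref{e:sbdreduce3}).

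The main obstacle I anticipate is the second half of the comparison --- that starting from the central block every $\alpha$-heavy set is hit within expected time $O(t_{\rm rel})$ --- since this is where the $(\delta,r)$-hypothesis (through Lemma \ref{lem: bdlem1'}) and the exact value of $\alpha$ enter, and where one must carefully dispose of the degenerate cases in which $I_{\tilde{o}}$, or the choice of side of $A$, is forced.
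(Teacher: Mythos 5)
Your proposal is correct and takes essentially the same route as the paper: the paper's (sketched) proof deduces (\ref{e:sbdreduce2}) by arguing as in Lemma \ref{lem: TVboundstrees} with the central block in place of the central vertex, invoking Lemma \ref{cor: 7.2} --- whose proof via $\pi(I_{\tilde{o}})\le \frac{r}{r+\delta^r}$, the choice of $\alpha$, (\ref{eq: 7.1.2}) and (\ref{eq: CM2}) you essentially reproduce --- and then obtains (\ref{e:sbdreduce3}) from (\ref{e:sbdreduce2}) via Proposition \ref{prop: TVbound0} and Corollary \ref{prop: hitpqinequalities}, exactly as you do. The only quibble is the degenerate case of a short final block containing $o$, where extending $I_{\tilde{o}}$ to an $r$-element interval could overlap the target set; this is a trivially fixable technicality (and one the paper itself glosses over).
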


\begin{proof}
Observe that (\ref{e:sbdreduce3}) follows from (\ref{e:sbdreduce2}) using Proposition \ref{prop: TVbound0} and Corollary \ref{prop: hitpqinequalities}. To deduce (\ref{e:sbdreduce2}) from (\ref{e:sbdreduce1}), we argue as in Lemma \ref{lem: TVboundstrees} using Lemma \ref{cor: 7.2} below which shows that starting from any vertex in $I_{\tilde{o}}$ the chain hits any set of $\pi$-measure at least $\alpha$ in time proportional to $t_{\rm rel}$ with large probability. We omit the details.  
\end{proof}

\begin{lemma}
\label{cor: 7.2}
Let $v \in I_{\tilde{o}}$. Let $C \subset [n] $ be such that $\pi(C) \ge \alpha$.  Then $\mathbb{E}_v[T_C] \le C(\alpha)\delta^{-r}t_{\mathrm{rel}}$ for some constant $C(\alpha)$. In particular, $\mathrm{hit}_{\alpha,v}(\epsilon) \le \epsilon^{-1}C(\alpha)\delta^{-r}t_{\mathrm{rel}}$ by Markov inequality.
\end{lemma}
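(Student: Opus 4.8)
The plan is to exploit that $\pi(C)\ge\alpha$ is very close to $1$ to locate a non-negligible sub-block of $C$ lying entirely on one side of the central block $I_{\tilde{o}}$, and then to bound the time to hit that sub-block from $v$ by combining Lemma~\ref{lem: bdlem1'} with the stationary hitting-time estimate~(\ref{eq: CM2}) of Lemma~\ref{lem: AF1}.

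First I would fix notation: write $I_{\tilde{o}}=\{w,\dots,w+r-1\}$, and set $L:=\{1,\dots,w-1\}$ and $R:=\{w+r,\dots,n\}$, so that $[n]=L\sqcup I_{\tilde{o}}\sqcup R$. (If $I_{\tilde{o}}=I_m$ is the final, possibly shorter block, replace it throughout by any window of $r$ consecutive integers of $[n]$ containing it; nothing below changes.) Recall from the paragraph preceding the lemma that $\pi(I_{\tilde{o}})\le \frac{r}{r+\delta^r}$, hence
$$\pi(L)+\pi(R)=1-\pi(I_{\tilde{o}})\ge \frac{\delta^r}{r+\delta^r}=4(1-\alpha).$$
By left--right symmetry assume $\pi(L)\ge\pi(R)$, so $\pi(L)\ge 2(1-\alpha)$; since $\pi(C^c)\le 1-\alpha$, the set $A:=C\cap L\subseteq L$ is non-empty with $\pi(A)\ge\pi(L)-\pi(C^c)\ge 1-\alpha$. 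As $A\subseteq C$ we have $T_C\le T_A$, so it remains to bound $\mathbb{E}_v[T_A]$.

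Then I would invoke the first inequality of~(\ref{eq: 7.1.2}) applied to the block $I_{\tilde{o}}=\{w,\dots,w+r-1\}$ and the point $v\in I_{\tilde{o}}$, with $A\subseteq[w-1]$, giving $\mathbb{E}_v[T_A]\le \delta^{-r}\,\mathbb{E}_{\pi_{[n]\setminus[w-1]}}[T_A]$. Since $o\in I_{\tilde{o}}$ forces $w\le o$, we have $[n]\setminus[w-1]\supseteq\{o,\dots,n\}$, and the defining property of the central vertex gives $\pi(\{o,\dots,n\})=1-\pi([o-1])\ge\tfrac12$; therefore $\mathbb{E}_{\pi_{[n]\setminus[w-1]}}[T_A]\le \frac{\mathbb{E}_\pi[T_A]}{\pi([n]\setminus[w-1])}\le 2\,\mathbb{E}_\pi[T_A]$. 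Finally,~(\ref{eq: CM2}) bounds $\mathbb{E}_\pi[T_A]\le t_{\mathrm{rel}}\pi(A^c)/\pi(A)\le t_{\mathrm{rel}}/(1-\alpha)$. Chaining these, $\mathbb{E}_v[T_C]\le \mathbb{E}_v[T_A]\le \frac{2\,\delta^{-r}}{1-\alpha}\,t_{\mathrm{rel}}=\frac{8(r+\delta^r)}{\delta^{2r}}\,t_{\mathrm{rel}}$, which is the asserted bound with $C(\alpha):=8(r+\delta^r)/\delta^r$, a quantity depending only on $\delta,r$. The case $\pi(R)\ge\pi(L)$ is handled identically, now with $A:=C\cap R\subseteq[n]\setminus[w+r-1]$, the ``resp.''\ form of~(\ref{eq: 7.1.2}), and the bound $\pi([w+r-1])\ge\pi(\{1,\dots,o\})=1-\pi([n]\setminus[o])\ge\tfrac12$. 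For the ``in particular'' clause: for any $C$ with $\pi(C)\ge\alpha$ and any $t>0$, Markov's inequality gives $\Pr_v[T_C>t]\le \mathbb{E}_v[T_C]/t\le C(\alpha)\delta^{-r}t_{\mathrm{rel}}/t$, so $t=\epsilon^{-1}C(\alpha)\delta^{-r}t_{\mathrm{rel}}$ makes this $\le\epsilon$ uniformly over all such $C$, i.e.\ $\mathrm{hit}_{\alpha,v}(\epsilon)\le \epsilon^{-1}C(\alpha)\delta^{-r}t_{\mathrm{rel}}$.

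The chain of inequalities is routine; the points needing attention — and the reason the lemma is stated with the tuned value $\alpha=1-\delta^r/(4(r+\delta^r))$ — are that $A$ must lie entirely on one side of $I_{\tilde{o}}$ so that the no-skipping mechanism behind~(\ref{eq: 7.1.2}) applies (jump sizes are $\le r$ while the block has $r$ vertices), that the relevant half-line $\{o,\dots,n\}$ or $\{1,\dots,o\}$ has $\pi$-measure at least $\tfrac12$ (which is precisely the definition of the central vertex $o$), and that $\pi(L)+\pi(R)\ge 4(1-\alpha)$ is large enough to force $\pi(A)\ge 1-\alpha>0$. None of these is a serious obstacle, but together they are exactly what makes the statement work.
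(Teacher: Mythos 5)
Your proof is correct and follows essentially the same route as the paper's: split $C$ along the two sides of the central block, use $\pi(I_{\tilde{o}})\le \frac{r}{r+\delta^r}$ (i.e.\ the choice of $\alpha$) to find a side carrying $C$-mass of order $1-\alpha$, transfer the starting point $v$ via (\ref{eq: 7.1.2}) to the stationary measure conditioned on a half-line of mass at least $\tfrac12$, and finish with (\ref{eq: CM2}) of Lemma \ref{lem: AF1}. The only (harmless) differences are that you take the heavier side, getting $\pi(A)\ge 1-\alpha$ where the paper settles for $(1-\alpha)/2$, and that you explicitly treat the case of a short final block, which the paper glosses over.
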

\begin{proof}
Let $I_{\tilde{o}}=\{v_1,v_1+1,\ldots, v_2\}$. Set $A_1=[v_1-1]$ and $A_2=[n]\setminus [v_2]$. For $i=1,2$, let $C_i=C\cap A_i$. Using the definition of $\alpha$ without loss of generality let $\pi(C_1)\geq \frac{1-\alpha}{2}$. Set $A=A_2\cup I_{\tilde{o}}$. By (\ref{eq: 7.1.2}) 
$$\mathbb{E}_v[T_C]\leq \mathbb{E}_v[T_{C_1}]\leq \delta^{-r}\mathbb{E}_{\pi_A}[T_{C_1}].$$
The proof is completed by observing that $\pi(A)\geq \frac{1}{2}$ and using Lemma \ref{lem: AF1}.
%
\end{proof}


Observe that, arguing as in Corollary \ref{cor: concentrationfortrees1}, it follows using Cheybeshev inequality that (\ref{e:sbdreduce1}) holds for some constants $C_{\epsilon}$ if we take $w_n=\max_{x\in [n]}\sqrt{ \Var_{x}[T_{I_{\tilde{o}}}]}$. Theorem \ref{thm: semibd} therefore follows at once from Proposition \ref{p:sbdreduction} provided we establish $\Var_{x}[T_{I_{\tilde{o}}}] \leq C_1\mathbb{E}_{x}[T_{I_{\tilde{o}}}]t_{\rm rel}$ for all $x\notin I_{\tilde{o}}$ (since $E_x[T_{I_{\tilde{o}}}]=O(t_{\rm mix})$). This is what we shall do. 

Observe that the root induces a partial order on the blocks. We say that $I_j \prec I_k$ if $I_j$ is a block between $I_k$ and $I_{\tilde{o}}$. For $j\in [m]$, $I_j\neq I_{\tilde{o}}$,  we define the parent block of $I_j$ in the obvious manner and denote its index by $f_j$. We define  $$T(j):=T_{I_j} \text{ and } \bar \tau_j :=T(f_{j})-T(j).$$ As mentioned above, for $I_j\neq I_{\tilde{o}}$ and $x\in I_j$ arbitrary we will bound $\Var_{x}[\sum \bar \tau_\ell]$, where $x \in I_j$ is arbitrary, and the sum is taken over blocks between $I_{j}$ and $I_{\tilde{o}} $. As opposed to the situation in Section \ref{sec: trees}, the terms in the sum are no longer independent. We now show that the correlation between them decays exponentially (Lemma \ref{lem: bdlem2}) and that for all $\ell$ we have that $\Var_{x}[\bar \tau_\ell] \le C_{2}t_{\mathrm{rel}}\mathbb{E}_{x}[\bar \tau_\ell]$ (Lemma \ref{lem: Kac3}). This shall establish the necessary upper bound mentioned above. We
omit the details. 
\begin{lemma}
\label{lem: bdlem1}
In the above setup, let $v \in [m] \setminus \{o\}$  Let
$(v_0=v,v_1,\ldots,v_{s})$ be indices of consecutive blocks. Let $\mu_1, \mu_2 \in
\mathscr{P}(I_v)$.  Let $k \in [s]$. Denote by $\nu_{k}^{(j)}$ ($j=1,2$)
the hitting
distribution of $I_{v_k}$ starting from initial distribution $\mu_j$ (i.e.~$\nu_{k}^{(j)}(z):=\Pr_{\mu_j}[X_{T(v_k)}=z]
$). Then $\|\nu_{k}^{(1)}-\nu_{k}^{(2)} \|_{\mathrm{TV}} \le (1-\delta^{r})^{k}
$. 
\end{lemma}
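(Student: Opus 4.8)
The plan is to reduce the claim to an elementary block‑by‑block contraction estimate. Assume without loss of generality that the block indices are increasing, i.e.\ $v_i=v+i$ for $0\le i\le s$ (the decreasing case is obtained by reflecting $[n]$). The first ingredient is purely geometric: since each block consists of $r$ consecutive integers, while by condition (i) of Definition~\ref{def:sbd} the chain never makes a jump of size exceeding $r$, no single step can carry the chain from a vertex strictly to the left of a block $I_{v_{i-1}}$ to a vertex strictly to its right. Consequently, starting from any $\mu\in\mathscr{P}(I_{v_0})$, the chain must visit the blocks $I_{v_0},I_{v_1},\ldots,I_{v_k}$ in this order, so the hitting times satisfy $T(v_0)<T(v_1)<\cdots<T(v_k)$. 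Writing $K_i(w,z):=\Pr_w[X_{T(v_i)}=z]$ for $w\in I_{v_{i-1}}$ and $z\in I_{v_i}$, the strong Markov property applied at time $T(v_{i-1})$ then gives, for each initial distribution $\mu_j$ ($j=1,2$), the recursion $\nu_i^{(j)}=\nu_{i-1}^{(j)}K_i$ for $1\le i\le k$, where we set $\nu_0^{(j)}:=\mu_j$.

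The crux will be to show that each kernel $K_i$ possesses a ``universal atom''. Let $u_i:=\min I_{v_i}$ be the vertex of $I_{v_i}$ adjacent to $I_{v_{i-1}}$; I claim $K_i(w,u_i)\ge \delta^{r}$ for every $w\in I_{v_{i-1}}$. Indeed, from such a $w$ the chain can proceed by unit steps to the right all the way to $u_i$, a path of length $u_i-w\le r$; by condition (ii) each of these steps has probability at least $\delta$, so this event has probability at least $\delta^{r}$, and since all of its intermediate vertices lie in $I_{v_{i-1}}$, on this event the chain's first visit to $I_{v_i}$ occurs exactly at $u_i$. Hence we may write $K_i(w,\cdot)=\delta^{r}\mathbf{1}\{\cdot=u_i\}+(1-\delta^{r})\widetilde K_i(w,\cdot)$ for a stochastic kernel $\widetilde K_i$, which after multiplying by $\nu_{i-1}^{(j)}$ and summing yields $\nu_i^{(j)}=\delta^{r}\mathbf{1}\{\cdot=u_i\}+(1-\delta^{r})\,\nu_{i-1}^{(j)}\widetilde K_i$. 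Subtracting the $j=1$ and $j=2$ versions, the point masses cancel and we get $\nu_i^{(1)}-\nu_i^{(2)}=(1-\delta^{r})(\nu_{i-1}^{(1)}-\nu_{i-1}^{(2)})\widetilde K_i$; since applying a stochastic kernel cannot increase total‑variation distance, this gives the one‑step bound $\|\nu_i^{(1)}-\nu_i^{(2)}\|_{\mathrm{TV}}\le (1-\delta^{r})\|\nu_{i-1}^{(1)}-\nu_{i-1}^{(2)}\|_{\mathrm{TV}}$.

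Iterating this inequality from $i=k$ down to $i=1$ and using $\|\nu_0^{(1)}-\nu_0^{(2)}\|_{\mathrm{TV}}=\|\mu_1-\mu_2\|_{\mathrm{TV}}\le 1$ then yields $\|\nu_k^{(1)}-\nu_k^{(2)}\|_{\mathrm{TV}}\le (1-\delta^{r})^{k}$, which is the assertion. The only point requiring care is the ``universal atom'' claim — specifically the verification that on the straight‑line event the first hit of $I_{v_i}$ genuinely lands at $u_i$, which is exactly where the matching of the block size with the maximal jump size is used — together with the bookkeeping that makes the decomposition $\nu_i^{(j)}=\nu_{i-1}^{(j)}K_i$ legitimate (that the chain cannot skip a block); the rest is routine. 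Note that reversibility plays no role here: only conditions (i) and (ii) of Definition~\ref{def:sbd} enter.
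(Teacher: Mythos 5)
Your proof is correct, and all the delicate points are handled: the no‑skipping property (a jump of size at most $r$ cannot clear a block of $r$ consecutive sites, so the block‑hitting kernels compose), and the verification that the forced unit‑step path from any $w\in I_{v_{i-1}}$ enters $I_{v_i}$ precisely at the boundary vertex $u_i$, which is what gives the uniform minorization $K_i(w,u_i)\ge\delta^{r}$. The underlying mechanism is the same as in the paper — a probability at least $\delta^{r}$, per block, of forcing a short unit‑step walk guaranteed by condition (ii) — but the implementation differs. The paper proves the case $k=1$ by an explicit trajectory coupling: the chain started from $\mu_1$ walks (with probability at least $\delta^{r}$) to the starting point of the chain started from $\mu_2$ before exiting the block, and from then on the two copies follow the same path, so their exit locations agree on an event of probability at least $\delta^{r}$; the general $k$ then follows by induction via the Markov property. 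You instead never couple trajectories: you minorize each one‑block exit kernel by the point mass $\delta^{r}\,\mathbf{1}_{u_i}$ (a Doeblin condition with a deterministic atom) and obtain the factor $(1-\delta^{r})$ per block algebraically, iterating along the kernel recursion $\nu_i^{(j)}=\nu_{i-1}^{(j)}K_i$. Your route buys a cleaner argument in two respects: it avoids constructing the coupling and in particular disposes of the paper's side remark that the two copies may reach the coalescence point at different times, and it makes explicit the use of condition (i) (through the no‑skipping/ordered‑visits bookkeeping) that the paper leaves implicit. The paper's coupling, on the other hand, is more flexible in that the coalescence point is allowed to be random (the other chain's starting state), which is the form of the argument reused verbally in Lemma \ref{lem: bdlem2}.
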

\begin{proof}
It suffices to prove the case $k=1$ as the general case follows by induction
using the Markov property. The case $k=1$ follows from coupling the chain
with the two different starting distributions in a way that with probability
at least $\delta^r$ there exists some $z_{v} \in I_v$ such that both chains hit $z_v$ before hitting $I_{f_{v}}$ and
from that moment on they follow the same trajectory. The fact that the hitting
time of $z_{v}$
might be different for the two chains makes no difference. We now describe
this coupling more precisely.

Let $\mu_1, \mu_2 \in
\mathscr{P}(I_v)$. There exists a coupling $(X_t^{(1)},X_t^{(2)})_{t \ge
0}$ in which $(X_t^{(i)})_{t \ge 0}$ is distributed as the chain $(\Omega,P,\pi)$
with initial distribution $\mu_i$ ($i=1,2$), such that $\Pr_{\mu_1,\mu_2}[S]
\ge \delta$, where $\Pr_{\mu_1,\mu_2}$ is the corresponding probability measure and the event $S$ is defined as follows. Let $R:=\min
\{t:X_{t}^{(1)} =X_{0}^{(2)} \}$ and $L_{i}:=\min \{t:X_{t}^{(i)} \in I_{f_{v}}\}$. Let $S$ denote the event: $R \le L_{1}$ and $X_{R+t}^{(1)}=X_{t}^{(2)}$ for any $t \ge 0$. Note that on $S$,
$X_{L_{1}}^{(1)}=X_{L_{2}}^{(2)}$.  
  Hence for any $D \subset I_{v_k}$,
\begin{equation*}
\begin{split}
&\nu_{1}^{(1)}(D)-\nu_{1}^{(2)}(D) = \Pr_{\mu_1,\mu_2}[X_{L_1}^{(1)}
\in D ]- \Pr_{\mu_1,\mu_2}[X_{L_{2}}^{(2)}\in D ] \\ & \le \Pr_{\mu_1,\mu_2}[X_{L_{1}}^{(1)}\in
D ,X_{L_{2}}^{(2)} \notin D]   \le 1-\Pr_{\mu_1,\mu_2}[S] \le 1-\delta^{r}.
\end{split}
\end{equation*}
\end{proof}

\begin{lemma}
\label{lem: bdlem2}
In the setup of Lemma \ref{lem: bdlem1}, let $0 \le i<j < s$.
Let $\mu \in \mathscr{P}(I_v)$. Write $\tau_i:=\bar \tau_{v_{i}}$ and $\tau_j:= \bar \tau_{v_{j}}$.
Then
$$\mathbb{E}_{\mu}[\tau_i \tau_j] \le \mathbb{E}_{\mu}[\tau_i]\mathbb{E}_{\mu}[
\tau_j]\biggl(1+(1-\delta^{r})^{j-i-1}\delta^{-r}\biggr).$$ 
\end{lemma}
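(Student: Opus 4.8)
The plan is to condition on the state at which the chain first enters the block $I_{v_{i+1}}$ and to use the strong Markov property to split $\mathbb{E}_\mu[\tau_i\tau_j]$ as a product plus a small error coming from the dependence of the ``late'' crossing time $\tau_j$ on the entrance point into $I_{v_{i+1}}$. Write $\eta := X_{T(v_{i+1})}$ for this entrance state and $h(\eta):=\mathbb{E}_\eta[\bar\tau_{v_j}]$. Because jumps have size at most $r$ and blocks have size $r$ (condition (i) of a $(\delta,r)$-SBD chain), the chain cannot skip a block, so $0=T(v_0)\le T(v_1)\le\cdots$ and in particular $T(v_{i+1})\le T(v_j)\le T(v_{j+1})$; consequently $\tau_i=T(v_{i+1})-T(v_i)$ is $\mathcal{F}_{T(v_{i+1})}$-measurable, while $\tau_j=\bar\tau_{v_j}$ is a functional of the trajectory after time $T(v_{i+1})$. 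The strong Markov property at $T(v_{i+1})$ then yields
\begin{equation*}
\mathbb{E}_\mu[\tau_i\tau_j]=\mathbb{E}_\mu\!\left[\tau_i\, h(X_{T(v_{i+1})})\right]
\qquad\text{and}\qquad
\mathbb{E}_\mu[\tau_j]=\mathbb{E}_\mu\!\left[h(X_{T(v_{i+1})})\right].
\end{equation*}
Since $\tau_i\ge0$, it suffices to show that $h$ is nearly constant on $I_{v_{i+1}}$, namely that $\big|h(\eta)-\mathbb{E}_\mu[\tau_j]\big|\le (1-\delta^r)^{j-i-1}(\delta^{-r}-1)\,\mathbb{E}_\mu[\tau_j]$ for all $\eta\in I_{v_{i+1}}$, because then $\mathbb{E}_\mu[\tau_i\,h(X_{T(v_{i+1})})]\le \mathbb{E}_\mu[\tau_i]\,\mathbb{E}_\mu[\tau_j]\big(1+(1-\delta^r)^{j-i-1}(\delta^{-r}-1)\big)$, which is stronger than the asserted bound.

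To prove the oscillation estimate I would decompose $h$ through the hitting distribution of $I_{v_j}$: conditioning on $X_{T(v_j)}$ and applying the strong Markov property once more gives $h(\eta)=\sum_{z\in I_{v_j}}\nu^{(\eta)}(z)\,g(z)$, where $g(z):=\mathbb{E}_z[T_{I_{v_{j+1}}}]$ and $\nu^{(\eta)}$ is the law of $X_{T(v_j)}$ under $\Pr_\eta$. Since $I_{v_j}$ lies $j-i-1$ blocks beyond $I_{v_{i+1}}$, Lemma \ref{lem: bdlem1} (applied with base block $I_{v_{i+1}}$) gives $\|\nu^{(\eta)}-\nu^{(\eta')}\|_{\mathrm{TV}}\le(1-\delta^r)^{j-i-1}$ for all $\eta,\eta'\in I_{v_{i+1}}$. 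Subtracting $\min_{z\in I_{v_j}}g(z)$ before pairing $g$ against $\nu^{(\eta)}-\nu^{(\eta')}$ (whose entries sum to $0$) yields $|h(\eta)-h(\eta')|\le(1-\delta^r)^{j-i-1}\,\big(\max_{I_{v_j}}g-\min_{I_{v_j}}g\big)$. Now $I_{v_{j+1}}\subset\Omega\setminus I_{v_j}$, so inequality (\ref{eq: 7.1.1}) of Lemma \ref{lem: bdlem1'} gives $\max_{I_{v_j}}g\le\delta^{-r}\min_{I_{v_j}}g$, whence $\max_{I_{v_j}}g-\min_{I_{v_j}}g\le(\delta^{-r}-1)\min_{I_{v_j}}g\le(\delta^{-r}-1)\,\mathbb{E}_\mu[\tau_j]$, the last inequality because $\min_{I_{v_j}}g\le h(\eta)$ for every $\eta$ (as $X_{T(v_j)}\in I_{v_j}$) and hence $\min_{I_{v_j}}g\le\mathbb{E}_\mu[h(X_{T(v_{i+1})})]=\mathbb{E}_\mu[\tau_j]$. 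Averaging $|h(\eta)-h(\eta')|$ over $\eta'=X_{T(v_{i+1})}$ turns this into the desired pointwise bound on $|h(\eta)-\mathbb{E}_\mu[\tau_j]|$, completing the argument.

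The one place that requires care — and the step I expect to be the main (modest) obstacle — is the measurability and strong Markov bookkeeping at time $T(v_{i+1})$: one must verify, using condition (i), that blocks are entered in order so that $T(v_{i+1})\le T(v_j)\le T(v_{j+1})$, that $\tau_i$ is determined by the path up to $T(v_{i+1})$, and that $\tau_j$ equals $\bar\tau_{v_j}$ applied to the shifted (post-$T(v_{i+1})$) trajectory; once this is in place everything reduces to the two estimates from Lemmas \ref{lem: bdlem1} and \ref{lem: bdlem1'} together with elementary bounds. This is the only use of the bounded-jump condition within this lemma itself, beyond its implicit role in those two lemmas.
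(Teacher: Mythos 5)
Your proposal is correct and follows essentially the same route as the paper: both reduce the problem, via the strong Markov property at the entrance to $I_{v_{i+1}}$, to comparing the hitting distribution of $I_{v_j}$ started from a point of $I_{v_{i+1}}$ with that started from $\mu$, using Lemma \ref{lem: bdlem1} for the discrepancy and inequality (\ref{eq: 7.1.1}) of Lemma \ref{lem: bdlem1'} to control the resulting error term. The only differences are cosmetic: the paper first bounds $\mathbb{E}_{\mu}[\tau_i\tau_j]\le\mathbb{E}_{\mu}[\tau_i]\max_{y\in I_{v_{i+1}}}\mathbb{E}_{y}[\tau_j]$ and runs the comparison as an explicit coupling, whereas you condition exactly on the entrance state and pair $g-\min g$ against the signed measure, which even yields the marginally sharper factor $\bigl(1+(1-\delta^{r})^{j-i-1}(\delta^{-r}-1)\bigr)$.
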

\begin{proof}
Let $\mu_{i+1}$ and $\mu_j$ be the hitting distributions of $I_{v_{i+1}}$
and
of $I_{v_{j}}$, respectively, of the chain with initial distribution $\mu$.
Note that $\mathbb{E}_{\mu}[\tau_j]=\mathbb{E}_{\mu_{i+1}}[\tau_j]=\mathbb{E}_{\mu_{j}}[\tau_j]$. Clearly
\begin{equation}
\label{eq: mixedterm}
\mathbb{E}_{\mu}[\tau_i \tau_j] \le \mathbb{E}_{\mu}[\tau_i]\max_{y
\in I_{v_{i+1}}}\mathbb{E}_{y}[\tau_j].
\end{equation}
 Let $y^{*} \in I_{v_{i+1}}$ be the state achieving the maximum in the RHS
above.
By Lemma \ref{lem: bdlem1} we can couple successfully the hitting distribution
of $I_{v_{j}}$  of the chain started from $y^{*}$ with that of the chain starting from initial distribution $\mu_{i+1}$ with probability at least $1-(1-\delta^{r})^{j-i-1}
$. The latter distribution is simply $\mu_{j}$. If the coupling fails, then
by (\ref{eq: 7.1.1}) we can upper bound the conditional expectation
of $\tau_j$ by $\delta^{-r}\E_{\mu}[\tau_j]$. Hence
$$\mathbb{E}_{y^{*}}[\tau_j] \le  \mathbb{E}_{\mu_j}[\bar \tau_j]+(1-\delta)^{j-i-1}
\delta^{-r}\mathbb{E}_{\mu}[\tau_j] =\mathbb{E}_{\mu}[\tau_j]\biggl(1+(1-\delta^{r})^{j-i-1}
\delta^{-r}\biggr).$$
The assertion of the lemma follows by plugging this estimate in (\ref{eq:
mixedterm}).
\end{proof}
\begin{lemma}
\label{lem: Kac3}
Let $j \in [m] \setminus \{o\} $. Let $\nu \in \mathscr{P}([n])$. Then there exists some $C_{1},C_{2}>0$ such that $\mathbb{E}_{\nu}[\bar \tau_j^2] \le C_{1} t_{\mathrm{rel}}\Phi(A_{j}) \le C_{2}t_{\mathrm{rel}}\mathbb{E}_{\nu}[\bar \tau_j]$.
\end{lemma}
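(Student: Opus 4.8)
The plan is to realize $\bar\tau_j$, started from the hitting distribution of $I_j$, as an ordinary hitting time across the block $I_j$, and then to feed it into the Kac-type identities of Proposition~\ref{prop: Kac}. Write $A_j := \bigcup\{I_\ell : I_j \preceq I_\ell\}$ for the union of $I_j$ together with all blocks lying on the far side of $I_j$ from the root. Since $I_j \neq I_{\tilde o}$, the set $A_j$ is contained in one of the two sides of $[n]$ relative to $o$, so $\pi(A_j)\le 1/2$ (hence $\pi(A_j^c)\ge 1/2$) by the choice of $o$, and $A_j$ is an interval. Because each of $I_1,\dots,I_{m-1}$ has exactly $r$ elements and the chain jumps by at most $r$, there are no transitions between non-adjacent blocks; in particular, starting from any $z \in I_j$ the chain, on first leaving $A_j$, lands in $I_{f_j}$. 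Hence, by the strong Markov property at $T(j)$ (and since $X_{T(j)}\in I_j$ always), conditionally on $X_{T(j)}=z$ the variable $\bar\tau_j$ has the law of $T_{A_j^c}$ under $\Pr_z$, so
\begin{equation*}
\mathbb{E}_\nu[\bar\tau_j^2] = \sum_{z\in I_j}\Pr_\nu[X_{T(j)}=z]\,\mathbb{E}_z[T_{A_j^c}^2] \le \max_{z\in I_j}\mathbb{E}_z[T_{A_j^c}^2], \qquad \mathbb{E}_\nu[\bar\tau_j] \ge \min_{z\in I_j}\mathbb{E}_z[T_{A_j^c}].
\end{equation*}

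The next step is to replace the extremal quantities over $I_j$ by the single distribution $\psi_{A_j}$ (the one-step entrance law into $A_j$ from $\pi_{A_j^c}$ that appears in Proposition~\ref{prop: Kac}), which is supported on $I_j$ since a one-step move from $A_j^c$ into $A_j$ must go from the boundary block $I_{f_j}$ of $A_j^c$ into $I_j$. Here I would invoke the within-block comparison of Lemma~\ref{lem: bdlem1'}, strengthened to second moments: for $x,y\in I_j$ the chain moves from $x$ to $y$ by $\pm1$ steps inside $I_j$ (never hitting $A_j^c$) with probability at least $\delta^{r-1}\ge\delta^r$, and on that event $T_{A_j^c}$ equals a deterministic number of steps plus a fresh copy of $T_{A_j^c}$ started from $y$; hence $\mathbb{E}_y[T_{A_j^c}^k] \le \delta^{-r}\mathbb{E}_x[T_{A_j^c}^k]$ for $k=1,2$. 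Averaging these against $\psi_{A_j}\in\mathscr{P}(I_j)$ gives $\max_{z\in I_j}\mathbb{E}_z[T_{A_j^c}^2] \le \delta^{-r}\mathbb{E}_{\psi_{A_j}}[T_{A_j^c}^2]$ and $\min_{z\in I_j}\mathbb{E}_z[T_{A_j^c}] \ge \delta^{r}\mathbb{E}_{\psi_{A_j}}[T_{A_j^c}]$.

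Finally, apply Proposition~\ref{prop: Kac} with target set $A_j^c$ (whose complement is $A_j$): this yields $\mathbb{E}_{\psi_{A_j}}[T_{A_j^c}] = 1/\Phi(A_j)$ and $\mathbb{E}_{\psi_{A_j}}[T_{A_j^c}^2] \le 2\,\mathbb{E}_{\psi_{A_j}}[T_{A_j^c}]\,t_{\mathrm{rel}}/\pi(A_j^c) \le 4\,t_{\mathrm{rel}}/\Phi(A_j)$, using $\pi(A_j^c)\ge1/2$. Combining the three displays gives $\mathbb{E}_\nu[\bar\tau_j^2] \le 4\delta^{-r}\,t_{\mathrm{rel}}/\Phi(A_j)$ and $\mathbb{E}_\nu[\bar\tau_j] \ge \delta^{r}/\Phi(A_j)$, hence $\mathbb{E}_\nu[\bar\tau_j^2] \le 4\delta^{-r}\,t_{\mathrm{rel}}/\Phi(A_j) \le 4\delta^{-2r}\,t_{\mathrm{rel}}\,\mathbb{E}_\nu[\bar\tau_j]$; so the lemma holds with $C_1 = 4\delta^{-r}$ and $C_2 = 4\delta^{-2r}$ (the middle quantity being $t_{\mathrm{rel}}/\Phi(A_j)$, with $A_j$ the side of $I_j$ away from the root).

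The point needing the most care is the identification, uniform over $z\in I_j$, of $\bar\tau_j$ with $T_{A_j^c}$: this rests on the ``no skipping over a full block'' property, which uses both that the blocks have size $r$ and that jumps are bounded by $r$, and one must treat the possibly shorter last block $I_m$ separately. The second-moment version of the within-block distortion bound (a routine extension of Lemma~\ref{lem: bdlem1'}) and keeping track of which side of the cut $\{A_j,A_j^c\}$ carries $\pi$-mass at least $1/2$ are the remaining routine checks.
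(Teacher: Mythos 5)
Your proof is correct and follows essentially the same route as the paper's: identify $\psi_{A_j}\in\mathscr{P}(I_j)$ via the bounded-jump condition (i), apply the Kac-type moment identities of Proposition \ref{prop: Kac} to the exit time of $A_j$, and transfer the bounds to an arbitrary initial distribution by the within-block comparison underlying Lemma \ref{lem: bdlem1'}, extended to second moments; your extra care about the no-skipping identification of $\bar\tau_j$ with $T_{A_j^c}$ and the possibly short last block only makes explicit what the paper leaves implicit. You also, correctly, read the middle quantity as $t_{\mathrm{rel}}/\Phi(A_j)$ rather than the $t_{\mathrm{rel}}\Phi(A_j)$ written in the statement, which in view of (\ref{eq: kac1}) is evidently a typo.
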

\begin{proof}
Let $\mu:=\psi_{A_{j}}$. By condition (i) in the definition of a $(\delta,r)$-SBD chain, $\mu \in \mathscr{P}(I_{j})$. By (\ref{eq: kac1}), $\mathbb{E}_{\mu}[\bar \tau_j^2]
\le C_{3} t_{\mathrm{rel}}\Phi(A_{j}) \le C_4t_{\mathrm{rel}}\mathbb{E}_{\mu}[\bar \tau_j] $. The proof is concluded using the same reasoning as in the proof of (\ref{eq: 7.1.1}) to argue that the first and second moments of $\bar \tau_j$ w.r.t.~different initial distributions can change by at most some multiplicative constant.
\end{proof}

\section{Examples}
\label{sec: examples}

\subsection{Aldous' example}
We now present a small variation of Aldous' example (see $\cite{levin2009markov}$,
Chapter 18) of a sequence of chains which satisfies the product condition
but does not exhibit cutoff. This example demonstrates that Theorem \ref{thm: semibd} may fail if condition (ii) in the definition of a $(\delta,r)$-semi birth and death chain is not satisfied. The main point in the construction is that the hitting times of worst sets are not concentrated.

\begin{figure*}[!h]
\begin{center}
\includegraphics[height=10cm,width=17.1cm]{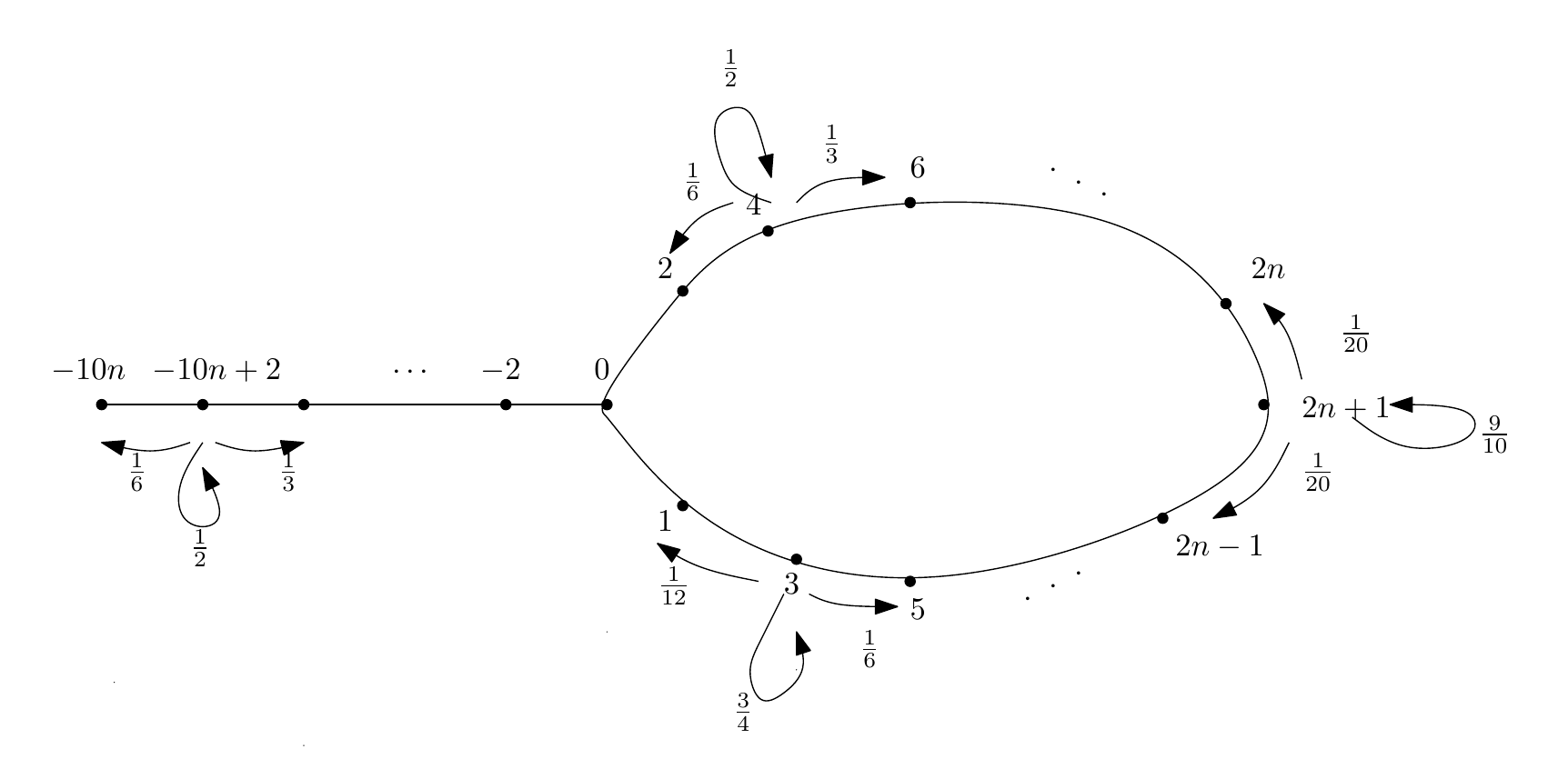}
\caption{We consider a Markov Chain on the above graph with the following transition probabilities:$P_{n}(x,x)=1/2$ for $x$ even and $P_{n}(x,x)=3/4$ for $x$ odd. $P_n(0,2)=P_n(0,1)=\frac{1}{5}, P_n(0,-2)=\frac{1}{10}$, $P_n(-10n, 10n+2)=1/2$, $P_n(2n+1,2n)=P_n(2n+1,2n-1)=\frac{1}{20}$. All other transition probabilities are given by: $P_n(2i,\min \{2i+2,2n+1\})=\frac{1}{3}$, $P_n(2i,2i-2)=P_n(2i-1,2i+1)=\frac{1}{6}$, $P_n(2i-1,\max \{ 2i-3,0\})=\frac{1}{12}$.}
\label{f:Aldousexample}
\end{center}
\end{figure*}

\begin{example}
Consider the chain $(\Omega_n,P_n,\pi_n)$, where  $\Omega_{n}:=\{-10n,-10n+2,\ldots,-2,0\} \cup [2n+1]$. Think of $\Omega$ as two paths (we call them branches) of length $n$ joined together at the ends and a path of length $5n$ joined to them at $0$ (see Figure \ref{f:Aldousexample}). Set $P_{n}(x,x)=1/2$ if $x$ is even, $P_{n}(x,x)=3/4$ if $x$ is odd and $x<2n+1$ and $P_n(2n+1,2n+1)=9/10$.

Conditionally on not making a lazy step the walk moves with a fixed bias towards $2n+1$ (apart from at the states $-10n,0,2n+1$): 
$$P_n(2i,\min \{2i+2,2n+1\})=2P_n(2i,2i-2)=2P_n(2i-1,2i+1)=4P_n(2i-1,\max
\{ 2i-3,0\})=\frac{1}{3}.$$  Finally, we set $P_{n}(-10n,-10n+2)=1/2$, $P_n(0,2)=P_n(0,1)=2P_n(0,-2)=\frac{1}{5}$ and $P_n(2n+1,2n)= P_n(2n+1,2n-1)=\frac{1}{20}$. It is easy to check that this chain is indeed reversible.

By Cheeger inequality (e.g.~\cite{levin2009markov}, Theorem 13.14), $t_\mathrm{rel}^{(n)}=O(1)$, as the bottleneck-ratio is bounded from below. In particular, the product condition holds.  As $\pi_n(2n+1)>1/2$, there is $\mathrm{hit}_{1/2}$-cutoff iff starting from $-10n$, the hitting-time of $2n+1$ is concentrated. We now explain why this is not the case. In particular, by Theorem \ref{thm: psigmacutoffequiv}, there is no cutoff.

\begin{figure*}[!h]
\begin{center}
\includegraphics[height=5cm,width=7cm]{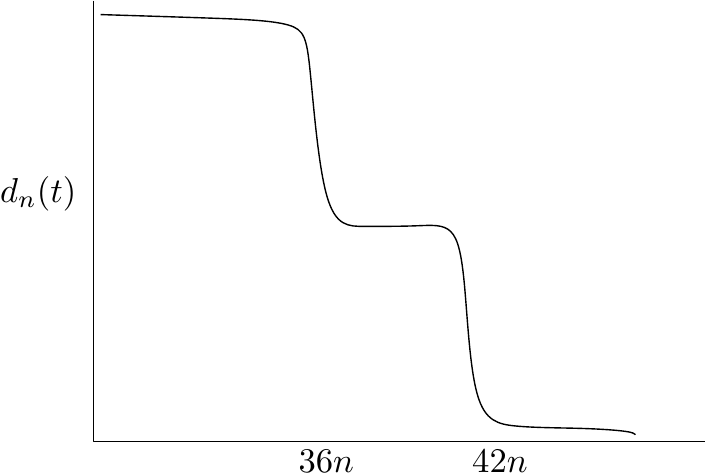}
\caption{Decay in total variation distance for Aldous' example: it does not have cutoff}
\label{f:dtjump}
\end{center}
\end{figure*}

Let $Y$ denote the last step away from $0$ before $T_{2n+1}$. Observe that if $Y=2$ (respectively, $Y=1$), then the chain had to reach $2n+1$ through the path $(2,4,\ldots,2n)$ ($(1,3,\ldots,2n-1)$, respectively). Denote, $Z_{i}:=T_{2n}1_{Y=i}$, $i=1,2$. Then on $Y=i$,  $T_{2n}=Z_i$, and its conditional distribution is concentrated around $42n$ for $i=1$ and around $36n$ for $i=2$, with deviations of order $\sqrt{n}$ . Since both $Y=1$ and $Y=2$ have probability bounded away from $0$, it follows that $d_{n}(37n)$ and $d_{n}(41n)$ are both bounded away from 0 and 1 (see Figure \ref{f:dtjump}). In particular, the product condition holds but there is no cutoff.
\end{example}
\subsection{Sharpness of Theorem \ref{thm: psigmacutoffequiv}}
Now we give an example to show that in Proposition \ref{prop: prodcondandhitcutoff} (and hence in Theorem \ref{thm: psigmacutoffequiv})  the value $\frac{1}{2}$ cannot be replaced by any larger value.

\begin{figure*}[h]
\begin{center}
\includegraphics[height=9cm,width=16cm]{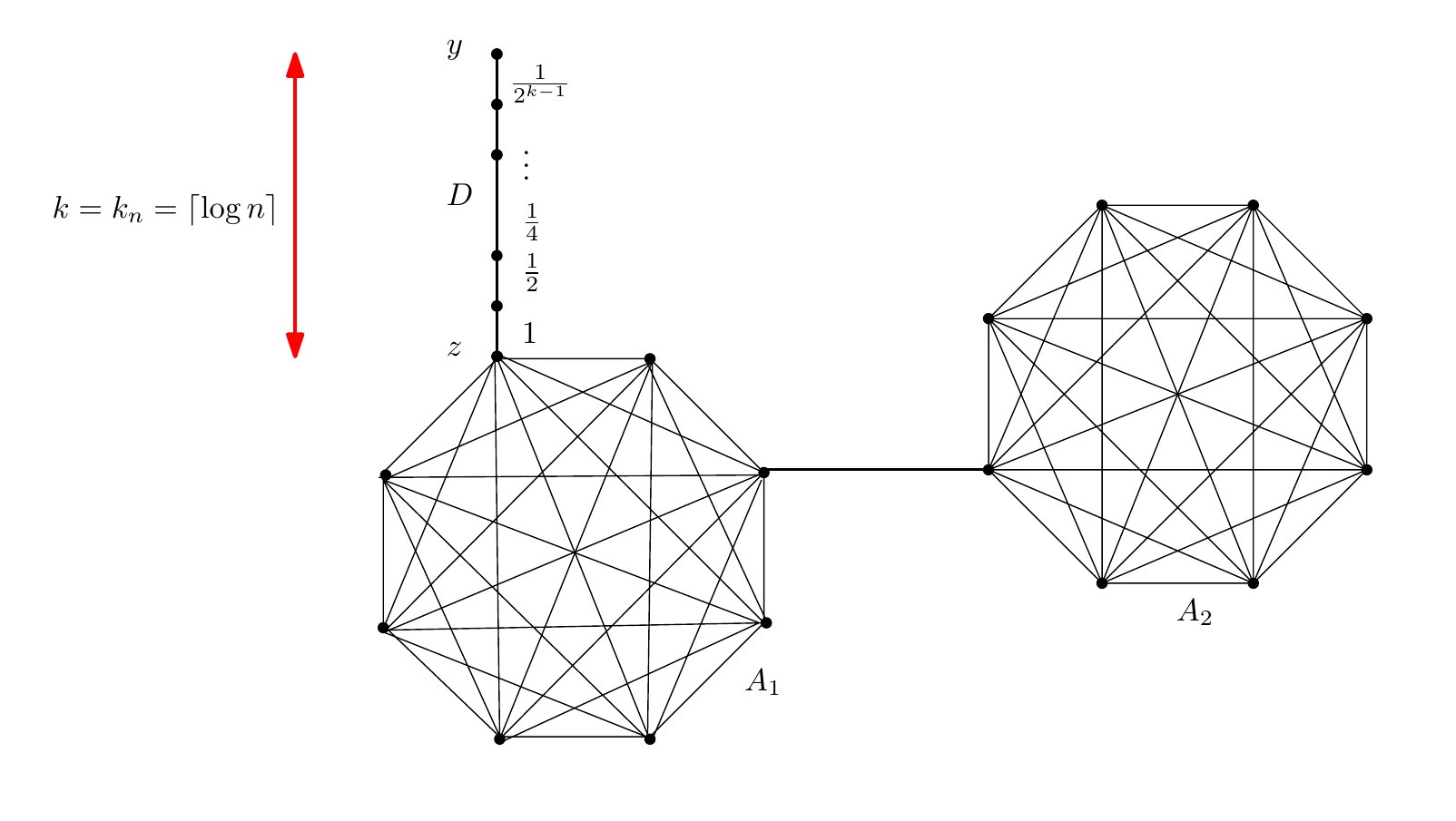}
\caption{We consider a lazy weighted nearest-neighbor random walk on the above
graph consisting of two disjoint cliques $A_1$ and $A_2$ of size $n$ connected by a single
edge and a path of length $k_n=\lceil \log n \rceil$ connected to $A_1$. The edge weights of all edges incident to vertices in $A_1\cup A_2$ is 1, while those belonging to the path are indicated in the figure. Inside
the path, the walk has a fixed bias towards the clique.}
\label{f: 2cliques}
\end{center}
\end{figure*}

\begin{example}
\label{ex: Sharpness}
Let $(\Omega_n,P_n,\pi_n)$ be the nearest-neighbor weighted random walk from Figure \ref{f: 2cliques}. Then $\rel^{(n)}=\Theta ( \mix^{(n)}) $, yet for every $1/2<\alpha <1$, the sequence exhibits $\hit_{\alpha}$-cutoff.
\end{example}
\begin{proof}
Let $\Phi_{n}:=\min_{A \subset \Omega_n:0<\pi(A) \le 1/2}\Phi_{n}(A)$ be the Cheeger constant of the $n$-th chain, where
$\Phi_{n}(A):= \frac{\sum_{a
\in A,b \in A^c}\pi_{n}(a)P_{n}(a,b)}{\pi_{n}(A)}$. Then by taking $A$ to be either $A_1$ or $A_2$, by Cheeger inequality (e.g.~\cite{levin2009markov}, Theorem 13.14), we have that $\rel^{(n)} \ge \frac{1}{2\Phi_{n}} \ge c_1 n^2 \ge c_2 \mix $. By Fact \ref{fact: cutoffandtrel}, indeed  $\rel^{(n)}=\Theta ( \mix^{(n)}) $.

Fix some $1/2<\alpha<1$. Let $B \subset \Omega_n$ be such that $\pi(B) \ge \alpha$. Denote the set of vertices belonging to the path, but not to $A_1$ by $D$. Then $\pi_n(D)=O(n^{-2})=o(1)$. Consequently, $\pi(A_i \cap B) \ge \alpha-1/2-o(1) $, for $i=1,2$. Using this observation, it is easy to verify that for all $x \in A_1 \cup A_2 $ we have that
\begin{equation}
\label{eq: sharpnessex1}
\mathrm{hit}_{\alpha,x}(\epsilon) \le c_{\alpha} \log (1/\epsilon), \text{ for any }0<\epsilon <1,
\end{equation}
for some constant $c_{\alpha}$ independent of $n$.

Let $y$ be the endpoint of the path which does not lie in $A_1$. Let $z$ be the other endpoint of the path. The hitting time of $z$ under $\Pr_{y}$ is concentrated around time $6 \log n $. Then by (\ref{eq: sharpnessex1}), together with the Markov property (using the same reasoning as in the proof of Lemma \ref{lem: TVboundstrees}) for all sufficiently large $n$ we have that for any $0<\epsilon \le 1/4$
\begin{equation}
\label{eq: sharpnessex2}
\begin{split}
\mathrm{hit}_{\alpha,y}^{(n)}(2\epsilon)& \le (6+o(1)) \log n + \mathrm{hit}_{\alpha,z}^{(n)}(\epsilon) =(6+o(1)) \log n,      
\\ \mathrm{hit}_{\alpha,y}^{(n)}(1-\epsilon) & \ge (6-o(1)) \log n.
\end{split}
\end{equation}
Similarly to the proof of Lemma \ref{lem: TVboundstrees}, for any $B \subset \Omega_n $ and any $x \in D$, we have that $\Pr_{y}[T_{B \setminus D}>t] \ge\Pr_{x}[T_{B
 }>t]  $, for all $t$.
Since $\pi_n(D)=o(1)$, this implies that for all sufficiently large $n$, for any $1/2<\alpha<1$, there exists some $1/2<\alpha'<\alpha$ ($\alpha'$ depends on $\alpha$ but not on $n$), such that  for any $x \in D$ we have that $\mathrm{hit}_{\alpha,y}^{(n)}(\epsilon) \ge \mathrm{hit}_{\alpha',x}^{(n)}(\epsilon)   $, for all $0<\epsilon<1$. This, together with (\ref{eq: sharpnessex1}) and the fact that the leftmost terms in both lines of (\ref{eq: sharpnessex2}) are up to negligible terms independent of $\alpha$ and $\epsilon$,   implies that the sequence of chains exhibits $\hit_{\alpha}$-cutoff for all $1/2<\alpha<1$.
\end{proof}
\begin{remark}
One can modify the sequence from Example \ref{ex: Sharpness} into a sequence of lazy simple nearest-neighbor random walks on a graph. Construct the $n$-th graph in the sequence as follows. Start with a binary tree $T$ of depth $n$. Denote its root by $y$, the set of its leaves by $A_1$ and $D:=T \setminus A_1$. Turn $A_1$ into a clique by connecting every two leaves of $T$ by an edge. Take another disjoint complete graph of size $|A_1|=2^{n}$ and denote its vertices by $A_2$. Finally, connect $A_1$ and $A_2$ by a single edge. Since the number of edges which are incident to $D$ is at most $2^{n+2}$, while the total number of edges of the graph is greater than $2^{2n}$, we have that $\pi_n(D)=o(1)$. The analysis above can be extended to this example with minor adaptations (although a rigorous analysis of this example is somewhat more tedious).
\end{remark}
\vspace{2mm}

\section*{Acknowledgements}
We are grateful to David Aldous, Allan Sly, Perla Sousi and Prasad Tetali for many helpful suggestions.

...

\nocite{*}
\bibliographystyle{plain}
\bibliography{cutoff}

\begin{thebibliography}{10}

\bibitem{aldous1983random}
David Aldous.
\newblock Random walks on finite groups and rapidly mixing markov chains.
\newblock In {\em Seminar on probability, XVII}, volume 986, pages 243--297.
  Springer, 1983.

\bibitem{aldous1986shuffling}
David Aldous and Persi Diaconis.
\newblock Shuffling cards and stopping times.
\newblock {\em The American Mathematical Monthly}, 93(5):333--348, 1986.

\bibitem{aldous2000reversible}
David Aldous and James~Allen Fill.
\newblock Reversible markov chains and random walks on graphs. book in
  preparation.
\newblock {\em URL for draft at http://www. stat. berkeley. edu/users/aldous},
  2000.

\bibitem{Aldous82}
David~J Aldous.
\newblock Some inequalities for reversible markov chains.
\newblock {\em Journal of the London Mathematical Society}, 2(3):564--576,
  1982.

\bibitem{brown1999interlacing}
Mark Brown.
\newblock Interlacing eigenvalues in time reversible markov chains.
\newblock {\em Mathematics of Operations Research}, 24(4):847--864, 1999.

\bibitem{chen2013comparison}
Guan-Yu Chen and Laurent Saloff-Coste.
\newblock Comparison of cutoffs between lazy walks and markovian semigroups.
\newblock {\em Journal of Applied Probability}, 50(4):943--959, 2013.

\bibitem{diaconis1996cutoff}
Persi Diaconis.
\newblock The cutoff phenomenon in finite markov chains.
\newblock {\em Proceedings of the National Academy of Sciences},
  93(4):1659--1664, 1996.

\bibitem{diaconis2006separation}
Persi Diaconis and Laurent Saloff-Coste.
\newblock Separation cut-offs for birth and death chains.
\newblock {\em The Annals of Applied Probability}, 16(4):2098--2122, 2006.

\bibitem{ding2010total}
Jian Ding, Eyal Lubetzky, and Yuval Peres.
\newblock Total variation cutoff in birth-and-death chains.
\newblock {\em Probability theory and related fields}, 146(1-2):61--85, 2010.

\bibitem{durrett2010probability}
Rick Durrett.
\newblock {\em Probability: theory and examples}.
\newblock Cambridge university press, 2010.

\bibitem{fill2012hitting}
James~Allen Fill and Vince Lyzinski.
\newblock Hitting times and interlacing eigenvalues: a stochastic approach
  using intertwinings.
\newblock {\em Journal of Theoretical Probability}, pages 1--28, 2012.

\bibitem{griffiths2012tight}
Simon Griffiths, Ross~J Kang, Roberto~Imbuzeiro Oliveira, and Viresh Patel.
\newblock Tight inequalities among set hitting times in markov chains.
\newblock {\em arXiv preprint arXiv:1209.0039}, 2012.

\bibitem{Hermon}
Jonathan Hermon.
\newblock A technical report on hitting times, mixing and cutoff.
\newblock {\em arXiv preprint arXiv:1501.01869}, 2015.

\bibitem{lancia2012entropy}
Carlo Lancia, Francesca~R Nardi, and Benedetto Scoppola.
\newblock Entropy-driven cutoff phenomena.
\newblock {\em Journal of Statistical Physics}, 149(1):108--141, 2012.

\bibitem{levin2009markov}
David~Asher Levin, Yuval Peres, and Elizabeth~Lee Wilmer.
\newblock {\em Markov chains and mixing times}.
\newblock Amer Mathematical Society, 2009.

\bibitem{Lovasz98}
L{\'a}szl{\'o} Lov{\'a}sz and Peter Winkler.
\newblock Mixing times.
\newblock {\em Microsurveys in discrete probability}, 41:85--134, 1998.

\bibitem{oliveira2012mixing}
Roberto~Imbuzeiro Oliveira.
\newblock Mixing and hitting times for finite markov chains.
\newblock {\em Electron. J. Probab}, 17(70):1--12, 2012.

\bibitem{peresamerican}
Y~Peres.
\newblock American institute of mathematics (aim) research workshop ``sharp
  thresholds for mixing times''(palo alto, december 2004).
\newblock {\em Summary available at http://www. aimath. org/WWN/mixingtimes}.

\bibitem{peres2011mixing}
Yuval Peres and Perla Sousi.
\newblock Mixing times are hitting times of large sets.
\newblock {\em arXiv preprint arXiv:1108.0133}, 2011.

\bibitem{peres2013total}
Yuval Peres and Perla Sousi.
\newblock Total variation cutoff in a tree.
\newblock {\em arXiv preprint arXiv:1307.2887}, 2013.

\bibitem{sinclair1993algorithms}
Alistair Sinclair.
\newblock {\em Algorithms for Random Generation and Counting: A Markov Chain
  Approach: A Markov Chain Approach}, volume~7.
\newblock Springer, 1993.

\bibitem{starr1966operator}
Norton Starr.
\newblock Operator limit theorems.
\newblock {\em Transactions of the American Mathematical Society}, pages
  90--115, 1966.

\bibitem{stein1961maximal}
EM~Stein.
\newblock On the maximal ergodic theorem.
\newblock {\em Proceedings of the National Academy of Sciences of the United
  States of America}, 47(12):1894, 1961.

\end{thebibliography}
\end{document}